\documentclass{amsart}

\pagestyle{headings}

\usepackage{amssymb}
\usepackage{amsmath}
\usepackage{amsfonts}
\usepackage{amstext}
\usepackage{amsthm}

\usepackage{ae,aecompl}
\usepackage[T1]{fontenc}

\usepackage{color}                    
\usepackage{hyperref}                 

\usepackage{verbatim} 		      

\newtheorem{theorem}{Theorem}[section]
\newtheorem{lemma}[theorem]{Lemma}
\newtheorem{proposition}[theorem]{Proposition}
\newtheorem{corollary}[theorem]{Corollary}

\theoremstyle{definition}
\newtheorem{remark}[theorem]{Remark}
\newtheorem{definition}[theorem]{Definition}
\newtheorem{example}[theorem]{Example}

\newtheorem*{claim}{Claim}

\numberwithin{equation}{section}

\DeclareMathOperator{\Alg}{Alg}
\DeclareMathOperator{\spa}{span}

\newcommand{\Sc}[1]{\mathcal{#1}}
\newcommand{\F}[1]{\mathfrak{#1}}
\newcommand{\B}[1]{\mathbb{#1}}

\newcommand{\<}{\langle}
\renewcommand{\>}{\rangle}

\title[Finitely Correlated Representations]{Finitely Correlated Representations of Product Systems of $C^*$-Correspondences over $\mathbb{N}^k$}
\author{Adam Hanley Fuller}
\address{Pure Math. Dept., U. Waterloo, Waterloo, ON N2L-3G1, CANADA}
\email{a2fuller@math.uwaterloo.ca}
\date{\today}
\subjclass[2000]{Primary 47L55; Secondary 46L08, 47L30}

\begin{document}

  \begin{abstract}
   We study isometric representations of product systems of correspondences over the semigroup $\mathbb{N}^k$ which are minimal dilations of finite dimensional, fully coisometric representations. We show the existence of a unique minimal cyclic coinvariant subspace for all such representations. The compression of the representation to this subspace is shown to be a complete unitary invariant. For a certain class of graph algebras the nonself-adjoint \textsc{wot}-closed algebra generated by these representations is shown to contain the projection onto the minimal cyclic coinvariant subspace. This class includes free semigroup algebras. This result extends to a class of higher-rank graph algebras which includes higher-rank graphs with a single vertex.
  \end{abstract}
  
\maketitle

  \section{Introduction}

   A $C^*$-correspondence over a $C^*$-algebra $\Sc{A}$ is a Hilbert bimodule with an $\Sc{A}$-valued inner product. The $C^*$-algebras of representations of $C^*$-correspondences were first studied by Pimsner \cite{Pimsner}.\ In a series of papers beginning with \cite{MuhlySolel1}, Muhly and Solel studied representations of $C^*$-corres\-pond\-ences and their algebras. Remarkably they managed to achieve many results from single operator theory in this very general setting. In \cite{MuhlySolel1} they include a dilation theorem which supersedes the classical Sz.-Nagy \cite{Nagy} dilation theorem for contractions and the Frazho-Bunce-Popescu \cite{Frazho, Bunce, Popescu} dilation theorem for row-contractions. In \cite{MuhlySolel2} a Wold decomposition is presented as well as a Beurling-type theorem.

Product systems of $C^*$-correspondences over the semigroup $\B{R}_+$ were introduced by Arveson in \cite{Arveson}.\ The study of product systems over discrete semigroups began with Fowler's work in \cite{Fowler}, where the generalised Cuntz-Pimsner algebra associated to a product system was introduced.\ In recent years there have been several papers considering product systems of $C^*$-correspondences over discrete semigroups, e.g. \cite{Shal, Shal2, Skalski, SkalZach1, Solel2, Solel3}. There has been work on dilation results for representations of product systems generalizing dilation results for commuting contractions. For example, Solel \cite{Solel2} shows the existence of a dilation for contractive representations of product systems over $\B{N}^2$. This result is analogous to the well-known Ando's theorem for two commuting contractions \cite{Ando}. Solel \cite{Solel3} gives necessary and sufficient conditions for a contractive representation of a product system over $\B{N}^k$ to have what is known as a \emph{regular} dilation. This result is analogous to a theorem of Brehmer \cite{Brehmer}. Skalski and Zacharias \cite{SkalZach1} have presented a Wold decomposition for representations of product systems over $\B{N}^k$.

	The generalised Cuntz-Pimsner $C^*$-algebras associated to product systems over the semigroup $\B{N}^k$ are not in general GCR, i.e. they can be NGCR. A theorem due to Glimm \cite[Theorem 2]{Glimm} tells us that NGCR $C^*$-algebras do not have smooth duals, i.e. there is no countable family of Borel functions on the space of unitary equivalence classes of irreducible representations which separates points. It follows that trying to classify all irreducible representations up to unitary equivalence of a generalised Cuntz-Pimsner algebra would be a fruitless task. However, in this paper we find a complete unitary invariant for a certain class of representations: \emph{finitely correlated representations}.

	An isometric representation of a product system of $C^*$-correspondences is finitely correlated if it is the minimal isometric dilation of a finite dimensional representation. We show the existence of a unique minimal cyclic coinvariant subspace for finitely correlated, isometric, fully coisometric representations of product systems over the semigroup $\B{N}^k$. The compression of the representation to this minimal subspace will be the complete unitary invariant. This result generalises the work of Davidson, Kribs and Shpigel \cite{DavKriShp} for the minimal isometric dilation $[S_1,\ldots,S_n]$ of a finite dimensional row-contraction. Indeed, studying row-contractions is equivalent to studying representations of the $C^*$-correspondence $\B{C}^n$ over the $C^*$-algebra $\B{C}$.\ In \cite{DavKriShp}, it is shown that the projection onto the minimal coinvariant subspace is contained in the \textsc{wot}-closed algebra generated by the $S_i$'s. This is an important invariant for free semigroup algebras \cite{DavKatPitts}. We are able to establish this in a number of interesting special cases.

	Finitely correlated representations were first introduced by Bratteli and Jorgensen \cite{BrattJorg1} via finitely correlated states on $\Sc{O}_n$. When $\omega$ is a finitely correlated state on $\Sc{O}_n$, the GNS construction on $\omega$ will give a representation $\pi$ of $\Sc{O}_n$ with the property that $[\pi(s_1),\ldots,\pi(s_n)]$ is a finitely correlated row isometry, where $s_1,\ldots,s_n$ are pairwise orthogonal isometries generating $\Sc{O}_n$. This relates \cite{DavKriShp} with \cite{BrattJorg1}. Similarly, following the work of Skalski and Zacharias \cite{SkalZach2}, we will define what it means for a state on the Cuntz-Pimsner algebra $\Sc{O}_\Lambda$ for finite $k$-graph $\Lambda$ to be finitely correlated.\ Finitely correlated states will give rise to finitely correlated representations of the product system associated to $\Lambda$.

In \cite{DavPitts} Davidson and Pitts classified atomic representations of $\Sc{O}_n$, which include as a special case the permutation representations studied by Bratteli and Jorgensen \cite{BrattJorg2}. If $s_1,\ldots,s_n$ are pairwise orthogonal isometries which generate $\Sc{O}_n$ then a representation $\pi$ of $\Sc{O}_n$ on a Hilbert space $\Sc{H}$ is atomic if there is an orthonormal basis for $\Sc{H}$ which is permuted by each $\pi(s_i)$ up to multiplication by scalars in $\B{T}\cup\{0\}$. There exist finitely correlated atomic representations of $\Sc{O}_n$ \cite{DavPitts}. Atomic representations have been a used in the study of other objects. In \cite{DavKat2} Davidson and Katsoulis show that the $C^*$-envelope of $\F{A}_n\times_{\varphi}\B{Z}^+$ is $\Sc{O}_n\times_{\varphi}\B{Z}$, where $\F{A}_n$ is the noncommutative disc algebra. Finitely correlated atomic representations of $\Sc{O}_n$ are used as a tool to get to this result, see \cite[Theorem 4.4]{DavKat2}. For a general $C^*$-correspondence or product system of $C^*$-correspondences it is not clear what it could mean for a representation to be atomic. Thus the finitely correlated representations presented in this paper are possibly the nearest analogy to finitely correlated atomic representations. In \cite{DavPowYang2, DavYang2} atomic representations of single vertex $k$-graphs have been classified.

In \S 2 we study finitely correlated representations of $C^*$-correspondences. To this end we follow the same program of attack as \cite{DavKriShp}. Many of the proofs follow the same line of argument as the corresponding proofs in \cite{DavKriShp}. When this is the case it is remarked upon. Lemma \ref{lemma41} corresponds to \cite[Lemma 4.1]{DavKriShp}, and is the key technical tool to our analysis in this section. It should be noted that Lemma \ref{lemma41} not just generalises \cite[Lemma 4.1]{DavKriShp}, but the proof presented here greatly simplifies the argument in \cite{DavKriShp}. The main results of this section are summarised in Theorem \ref{summary} and Corollary \ref{complete unitary invariant}.

   Every graph can be associated with a $C^*$-correspondence. Thus results on representations of $C^*$-correspondences also apply to graph algebras. In Section \ref{graph algebras} we apply our results to nonself-adjoint graph algebras. The study of nonself-adjoint graph algebras has received attention in several papers in recent years, e.g. \cite{DavKat, JaeckPower, JuryKribs2, KribsPower1, KribsPower2, Solel1}. We strengthen our results from Section \ref{cstar cor} for the case of an algebra of a finite graph with the \emph{strong double-cycle property}, i.e.\ for finite graphs where every vertex has a path to a vertex which lies on two distinct minimal cycles. We show that the nonself-adjoint \textsc{wot}-closed algebra generated by a finitely correlated, isometric, fully coisometric representation of such a graph contains the projection onto its unique minimal cyclic coinvariant subspace. Aided by the work of Kribs and Power \cite{KribsPower1} and Muhly and Solel \cite{MuhlySolel2} on the algebras of directed graphs we use the same method of proof as in \cite{DavKriShp} to prove this result. This includes the case studied in \cite{DavKriShp}.

   In Section \ref{prod sys section} we prove the prove the main results of the paper (Theorem \ref{prod sys summary} and Corollary \ref{complete unitary invariant 2}) by generalising the results of \S 2 to product systems of $C^*$-correspondences over $\B{N}^k$. Our main tool in this section is Theorem \ref{rank k to rank 1}. A representation of a product system of $C^*$-correspondences provides a representation for each $C^*$-correspondence in the product system. An isometric dilation of a contractive representation of a product system of $C^*$-correspondences gives an isometric dilation of each of the representations of the individual $C^*$-correspondences. Theorem \ref{rank k to rank 1} tells us that if we have a \emph{minimal} isometric dilation of a fully coisometric representation of a product system over $\B{N}^k$, then the dilations of the corresponding representations of certain individual $C^*$-correspondences in the product system will also be minimal. This allows us to deduce the existence of a unique minimal cyclic coinvariant subspace for finitely correlated, isometric, fully coisometric representations of product systems from the $C^*$-correspondence case. In fact, we will show in Theorem \ref{prod sys summary} that the unique minimal cyclic coinvariant subspace for a representation of a product system will be the same unique minimal cyclic coinvariant subspace for a certain $C^*$-correspondence.

   Higher-rank graph algebras were introduced by Kumjian and Pask in \cite{KumPask}. A $k$-graph is, roughly speaking, a set of vertices with $k$ sets of directed edges ($k$ colours), together with a commutation rule between paths of different colours. In the last decade there has been a lot of study on the $C^*$-algebras generated by representations of higher-rank graphs. In more recent years there has been some study on their nonself-adjoint counterparts, see e.g. \cite{KribsPower3, Power}. The case of algebras of higher-rank graphs with a single vertex has proved to be rather interesting. Their study was begun by Kribs and Power \cite{KribsPower3}. Further study has been carried out by Davidson, Power and Yang \cite{Power, DavPowYang1, DavPowYang2, DavYang1, DavYang2, Yang}.

   A $k$-graph can be associated with a product system of $C^*$-correspondences over the discrete semigroup $\B{N}^k$. Thus results on product systems of $C^*$-correspondences over $\B{N}^k$ apply to higher-rank graph algebras. In Section \ref{k graphs} we remark that since certain $1$-graphs contained in a $k$-graph $\Lambda$ share the same unique minimal cyclic coinvariant subspace for a finitely correlated representation, if $\Lambda$ contains a $1$-graph with the strong double-cycle property, then the \textsc{wot}-closed algebra generated by a finitely correlated, isometric, fully coisometric representation will contain the projection onto its minimal cyclic coinvariant subspace. A $k$-graph with only one vertex satisfies this condition.

	In \cite{DavKriShp} the case of non-fully coisometric, finitely correlated row isometries are also studied. The case of finitely correlated representations of product systems of $C^*$-correspondences which are not fully coisometric are not studied in this paper. The reason for this is because, unlike the Frazho-Bunce-Popescu dilation used in \cite{DavKriShp}, dilations of representations of product systems need not be unique if they are not fully coisometric. See \S \ref{prod sys min iso dil section} for a discussion of dilation theorems for representations of product systems of $C^*$-correspondences over $\B{N}^k$.

  \subsection{Acknowledgements}The author would like to thank his advisor, Ken Davidson, for his invaluable advice and support. The author would also like to thank the anonymous reviewer for their careful reading of the manuscript and for providing the author with many helpful notes.

  \section{\texorpdfstring{$C^*$-Correspondences}{C*-Correspondences}}\label{cstar cor}
  \subsection{Preliminaries and Notation}
	We will assume throughout that all $C^*$-alge\-bras are unital and that representations of $C^*$-algebras are unital. The theory will also work for the non-unital case. The details are left to the reader.

  Most of the background on $C^*$-correspondences needed in this paper can be found in the works of Muhly and Solel \cite{MuhlySolel1, MuhlySolel2}. Provided here is a brief summary of the necessary definitions. 

  Let $E$ be a right module over a $C^*$-algebra $\Sc{A}$. An $\Sc{A}$-valued inner product on $E$ is a map $\<\cdot,\cdot\>:E\times E\rightarrow\Sc{A}$ which is conjugate linear in the first variable, linear in the second variable and satisfies
  \begin{enumerate}
   \item $\<\xi,\eta a\>=\<\xi,\eta\>a$
   \item $\<\xi,\eta\>^*=\<\eta,\xi\>$ and
   \item $\<\xi,\xi\>\geq0$ where $\<\xi,\xi\>=0$ if and only if $\xi=0$,
  \end{enumerate}
  for $\xi,\eta\in E$ and $a\in\Sc{A}$. We can define a norm on $E$ by setting $\|\xi\|=\|\<\xi,\xi\>\|^{\frac{1}{2}}$. If $E$ is complete with respect to this norm then it is called a \emph{Hilbert $C^*$-module}. We denote by $\Sc{L}(E)$ the space of all \emph{adjointable} bounded linear functions from $E$ to $E$, i.e. the bounded operators on $E$ with a (necessarily unique) adjoint with respect to the inner product on $E$. The adjointable operators on a Hilbert $C^*$-module form a $C^*$-algebra. For $\xi, \eta\in E$ define $\xi\eta^*\in\Sc{L}(E)$ by  
  \begin{equation*}
   \xi\eta^*(\zeta)=\xi\<\zeta,\eta\>
  \end{equation*}
  for each $\zeta\in E$. Denote by $\Sc{K}(E)$ the closed linear span of $\{\xi\eta^*:\xi,\eta\in E\}$. The space $\Sc{K}(E)$ forms a $C^*$-subalgebra of $\Sc{L}(E)$ referred to as the \emph{compact operators on $E$}. More on Hilbert $C^*$-modules can be found in \cite{Lance}.

  If there is a homomorphism $\varphi$ from $\Sc{A}$ to $\Sc{L}(E)$, then the Hilbert $C^*$-module $E$, together with the left action on $E$ defined by $\varphi$, is a \emph{$C^*$-correspondence over $\Sc{A}$}. If $E$ and $F$ are two $C^*$-correspondences over $\Sc{A}$ we will write $\varphi_E$ and $\varphi_F$ to describe the left action of $\Sc{A}$ on $E$ and $F$ respectively. With that said, when there is little chance of confusion we will write $a\xi$ in place of $\varphi(a)\xi$.
  
  Suppose $E$ and $F$ are two $C^*$-correspondences over a $C^*$-algebra $\Sc{A}$. We define the following $\Sc{A}$-valued inner product on the algebraic tensor product $E\otimes_A F$, of $E$ and $F$: for $\xi_1, \xi_2$ in $E$ and $\eta_1, \eta_2$ in $F$ we let
  \begin{equation*}
   \<\xi_1\otimes\eta_1,\xi_2\otimes\eta_2\>=\<\eta_1,\varphi_F(\<\xi_1,\xi_2\>)\eta_2\>.
  \end{equation*}
  Taking the Hausdorff completion of $E\otimes_A F$ with respect to this inner product gives us the \emph{interior tensor product} of $E$ and $F$ denoted $E\otimes F$. This is the only tensor product of $C^*$-correspondences that we will use in this paper so we will omit the word ``interior'' and merely say we are taking the tensor product of $C^*$-correspondences. When taking the tensor product of a $C^*$-correspondence $E$ with itself we will write $E^2$ in place of $E\otimes E$, and similarly we will write $E^n$ in place of the $n$-fold tensor product of $E$ with itself. We will also set $E^0=\Sc{A}$.

  The Fock space $\Sc{F}(E)$ is defined to be the $C^*$-correspondence 
  \begin{equation*}
   \Sc{F}(E)=\sideset{}{^\oplus}\sum_{n\geq 0}E^n.
  \end{equation*}
  The left action of $\Sc{A}$ on $\Sc{F}(E)$ is denote by $\varphi_\infty$ and defined by
  \begin{equation*}
   \varphi_\infty(a)\xi_1\otimes\ldots\otimes\xi_n=(a\xi_1)\otimes\ldots\otimes\xi_n.
  \end{equation*}
  We define \emph{creation operators} $T_\xi$ in $\Sc{L}(\Sc{F}(E))$ for $\xi\in E$ by
  \begin{equation*}
   T_\xi(\xi_1\otimes\ldots\otimes\xi_n)=\xi\otimes\xi_1\otimes\ldots\otimes\xi_n\in E^{n+1}
  \end{equation*}
  for $\xi_1\otimes\ldots\otimes\xi_n\in E^n$. The norm closed algebra in $\Sc{L}(\Sc{F}(E))$ generated by 
	\begin{equation*}
		\{T_\xi, \varphi_\infty(a):\xi\in E, a\in\Sc{A}\} 
	\end{equation*}
	is denoted by $\Sc{T}_+(E)$ and called the \emph{tensor algebra over $E$}. The $C^*$-algebra generated by $\Sc{T}_	+(E)$ is denoted $\Sc{T}(E)$ and called the \emph{Toeplitz algebra over $E$}.

  A \emph{completely contractive covariant representation} $(A,\sigma)$ of a $C^*$-correspondence $E$ over $\Sc{A}$ on a Hilbert space $\Sc{H}$ is a completely contractive linear map $A$ from $E$ to $\Sc{B}(\Sc{H})$ and a unital, non-degenerate representation $\sigma$ of $\Sc{A}$ on $\Sc{H}$ which satisfy the following covariant property:
  \begin{equation*}
    A(a\xi b)=\sigma(a)A(\xi)\sigma(b)
  \end{equation*}
  for $a,b\in\Sc{A}$ and $\xi\in E$. We will abbreviate \emph{completely contractive covariant representation} to merely \emph{representation}, as these will be the only representations of $C^*$-correspondences we will consider. A representation $(A,\sigma)$ is called \emph{isometric} if it satisfies
  \begin{equation*}
   A(\xi)^*A(\eta)=\sigma(\<\xi,\eta\>).
  \end{equation*}
  Why this is called isometric will become clear presently.

  If $\sigma$ is a representation of $\Sc{A}$ on a Hilbert space $\Sc{H}$ and $E$ is a $C^*$-correspondence over $\Sc{A}$, then we can form a Hilbert space $E\otimes_\sigma\Sc{H}$ by taking the algebraic tensor product of $E$ and $\Sc{H}$ and taking the Hausdorff completion with respect to the inner product defined by
  \begin{equation*}
   \<\xi_1\otimes h_1,\xi_2\otimes h_2\>=\<h_1,\sigma(\<\xi_1,\xi_2\>)h_2\>
  \end{equation*}
  for $\xi_1,\xi_2\in E$ and $h_1,h_2\in\Sc{H}$. We will write $E\otimes\Sc{H}$ in place of $E\otimes_\sigma\Sc{H}$ when it is understood which representation we are talking about. We can \emph{induce} $\sigma$ to a representation $\sigma^E$ of $\Sc{L}(E)$ on $E\otimes\Sc{H}$. This is defined by
  \begin{equation*}
   \sigma^E(T)(\xi\otimes h)=(T\xi)\otimes h
  \end{equation*}
  for $T\in\Sc{L}(E), \xi\in E$ and $h\in\Sc{H}$. In particular we can induce $\sigma$ to $\sigma^{\Sc{F}(E)}$. We define an isometric representation $(V,\rho)$ of $E$ on $\Sc{F}(E)\otimes\Sc{H}$ by
  \begin{equation*}
   \rho(a)=\sigma^{\Sc{F}(E)}\circ\varphi_\infty(a)
  \end{equation*}
  for each $a\in\Sc{A}$ and
  \begin{equation*}
   V(\xi)=\sigma^{\Sc{F}(E)}(T_\xi)
  \end{equation*}
  for each $\xi\in E$. We call $(V,\rho)$ the representation of $E$ \emph{induced by $\sigma$}.

  If $(A,\sigma)$ is a representation of $E$ on $\Sc{H}$, then we define the operator $\tilde{A}$ from $ E\otimes_\sigma\Sc{H}$ to $\Sc{H}$ by
  \begin{equation*}
   \tilde{A}(\xi\otimes h)=A(\xi)h.
  \end{equation*}
  This operator was introduced by Muhly and Solel in \cite{MuhlySolel1}, where they show that $\tilde{A}$ is a contraction. Furthermore, they show that $\tilde{A}$ is an isometry if and only if $(A,\sigma)$ is an isometric representation. A representation is called \emph{fully coisometric} when $\tilde{A}$ is a coisometry. We write $\tilde{A}_n$ for the operator from $E^n\otimes_\sigma\Sc{H}$ to $\Sc{H}$ defined by
  \begin{equation*}
   \tilde{A}_n(\xi_1\otimes\ldots\otimes\xi_n\otimes h)=A(\xi_1)\ldots A(\xi_n)h.
  \end{equation*}
  Note also that $\sigma(a)\tilde{A}=\tilde{A}\sigma^E(\varphi(a))$.

  If $\sigma$ is a representation of $\Sc{A}$ on $\Sc{H}$ and $X$ is in the commutant of $\sigma({\Sc{A}})$, then we can define a bounded operator $I\otimes X$ on $E\otimes\Sc{H}$ by
  \begin{equation*}
   (I\otimes X)(\xi\otimes h)=\xi\otimes Xh.
  \end{equation*}
  It is readily verifiable that $I\otimes X$ is a bounded operator and that $\|I\otimes X\|\leq\|X\|$. In particular if $\Sc{M}$ is a subspace of $\Sc{H}$ with $P_{\Sc{M}}\in\sigma(\Sc{A})'$ then $I\otimes P_{\Sc{M}}$ is a projection in $\Sc{B}(E\otimes\Sc{H})$. Thus $E\otimes\Sc{H}$ decomposes into a direct sum $E\otimes\Sc{H}=(E\otimes\Sc{M})\oplus(E\otimes\Sc{M}^\perp)$.

 Let $(S,\rho)$ be a representation of a $C^*$-correspondence $E$ on a Hilbert space $\Sc{H}$. We denote by $I$ be the identity in $\Sc{B}(H)$. We call the weak-operator topology closed algebra
	\begin{equation*}
		\F{S}=\Alg\{I,\ S(\xi),\ \rho(a):\xi\in E,\ a\in \overline{\Sc{A}\}}^{\textsc{wot}}
	\end{equation*}
the \emph{unital \textsc{wot}-closed algebra generated by the representation $(S,\rho)$.}

  \subsection{Minimal Isometric Dilations}
  \begin{definition}
  Let $E$ be a $C^*$-correspondence over a $C^*$-algebra $\Sc{A}$ and let $(A,\sigma)$ be a representation of $E$ on a Hilbert space $\Sc{V}$. A representation $(S,\rho)$ of $E$ on $\Sc{H}$ is a \emph{dilation} of $(A,\sigma)$ if $\Sc{V}\subseteq\Sc{H}$ and
  \begin{enumerate}
   \item $\Sc{V}$ reduces $\rho$ and $\rho(a)|_{\Sc{V}}=\sigma(a)$ for all $a\in\Sc{A}$.
   \item $\Sc{V}^\perp$ is invariant under $S(\xi)$ for all $\xi\in E$
   \item $P_{\Sc{V}}S(\xi)|_{\Sc{V}}=A(\xi)$ for all $\xi\in E$.
  \end{enumerate}
  A dilation $(S,\rho)$ of $(A,\sigma)$ is an \emph{isometric dilation} if $(S,\rho)$ is an isometric representation. A dilation $(S,\rho)$ of $(A,\sigma)$ on $\Sc{H}$ is called \emph{minimal} if 
   \begin{equation*}
    \Sc{H}=\bigvee_{n\geq0}\tilde{S}_n(E^n\otimes\Sc{V}).
   \end{equation*}
  \end{definition}
  
  \begin{theorem}[Muhly and Solel \cite{MuhlySolel1}]\label{cstar dilation}
   If $(A,\sigma)$ is a contractive representation of a $C^*$-correspondence $E$ on a Hilbert space $\Sc{V}$, then $(A,\sigma)$ has an isometric dilation $(S,\rho)$. Further, we can choose $(S,\rho)$ to be minimal; and  the minimal isometric dilation of $(A,\sigma)$ is unique up to a unitary equivalence which fixes $\Sc{V}$.
  \end{theorem}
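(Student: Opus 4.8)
The plan is to pass from $(A,\sigma)$ to the contraction $\tilde A\colon E\otimes_\sigma\Sc{V}\to\Sc{V}$ and build an explicit Schäffer-type dilation on a Fock module over a defect correspondence, in the spirit of the Frazho--Bunce--Popescu construction. Starting from the covariance relation $\sigma(a)\tilde A=\tilde A\,\sigma^E(\varphi_E(a))$ noted above, taking adjoints and combining shows that $\tilde A^*\tilde A$ commutes with $\sigma^E(\varphi_E(a))=\varphi_E(a)\otimes I_{\Sc V}$ for every $a\in\Sc{A}$. Hence the defect operator $(I-\tilde A^*\tilde A)^{1/2}$ on $E\otimes_\sigma\Sc{V}$ commutes with the left action, so the \emph{defect space} $\Sc{D}=\overline{(I-\tilde A^*\tilde A)^{1/2}(E\otimes_\sigma\Sc{V})}$ is invariant under each $\varphi_E(a)\otimes I_{\Sc V}$ and carries a representation $\sigma_{\Sc D}(a)=(\varphi_E(a)\otimes I_{\Sc V})|_{\Sc{D}}$ of $\Sc{A}$. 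I would then set $\Sc{H}=\Sc{V}\oplus(\Sc{F}(E)\otimes_{\sigma_{\Sc D}}\Sc{D})$.

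For existence I take $\rho$ to act as $\sigma$ on $\Sc{V}$ and as the induced left action on $\Sc{F}(E)\otimes_{\sigma_{\Sc D}}\Sc{D}$, and I define $\tilde S\colon E\otimes_\rho\Sc{H}\to\Sc{H}$ by the block form
\begin{equation*}
\tilde S=\begin{pmatrix}\tilde A & 0\\ W & \tilde V\end{pmatrix}
\end{equation*}
relative to $E\otimes_\rho\Sc{H}=(E\otimes\Sc{V})\oplus\bigl(E\otimes(\Sc{F}(E)\otimes\Sc{D})\bigr)$ and $\Sc{H}=\Sc{V}\oplus(\Sc{F}(E)\otimes\Sc{D})$, where $\tilde V$ is the isometry attached to the induced (hence isometric) representation of $E$ on $\Sc{F}(E)\otimes\Sc{D}$ and $W\colon E\otimes\Sc{V}\to\Sc{F}(E)\otimes\Sc{D}$ is $(I-\tilde A^*\tilde A)^{1/2}$ followed by inclusion of $\Sc{D}=E^0\otimes\Sc{D}$ as the zeroth Fock level. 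Computing $\tilde S^*\tilde S$ gives $\tilde A^*\tilde A+W^*W=I$ on $E\otimes\Sc{V}$ and $\tilde V^*\tilde V=I$, while $W^*\tilde V=0$ because $\tilde V$ has range in Fock levels $\ge1$ and $W$ in level $0$; thus $\tilde S$ is an isometry and $(S,\rho)$ is isometric. Covariance of $(S,\rho)$ follows from that of $\tilde A$ and $\tilde V$ together with the commutation of $W$ with the left actions, and the three dilation axioms are read directly off the block form: $\rho$ is block diagonal, so $\Sc{V}$ reduces it and $\rho|_{\Sc V}=\sigma$; the vanishing $(1,2)$ entry makes $\Sc{V}^\perp$ invariant for every $S(\xi)$; and the $(1,1)$ entry yields $P_{\Sc V}S(\xi)|_{\Sc V}=A(\xi)$.

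Minimality of this construction is automatic: $\tilde S_1(E\otimes\Sc{V})$ has second components with closed span $\Sc{D}$, and iterating $\tilde S$ fills the successive Fock levels through $\tilde V$, so $\bigvee_{n\ge0}\tilde S_n(E^n\otimes\Sc{V})=\Sc{H}$; a general isometric dilation is cut down to its minimal part by restricting to this span, which is reducing for $\rho$. For uniqueness, given two minimal isometric dilations $(S,\rho)$ on $\Sc{H}$ and $(S',\rho')$ on $\Sc{H}'$, I would define $U$ on the spanning family by $U\tilde S_n(\eta\otimes v)=\tilde S_n'(\eta\otimes v)$ for $\eta\in E^n$, $v\in\Sc{V}$, and check that it is a well-defined surjective isometry fixing $\Sc{V}$ (the case $n=0$, $\eta\in E^0=\Sc{A}$, gives $U|_{\Sc V}=\mathrm{id}$) which intertwines the representations, since applying $S(\xi)$ or $\rho(a)$ merely shifts the spanning vectors in a manner matched on both sides.

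The crux, and the only nonroutine step, is showing that the inner products $\<\tilde S_m(\eta\otimes v),\tilde S_n(\eta'\otimes v')\>$ depend only on $(A,\sigma)$ and not on the chosen dilation. For $m=n$ this is immediate from the isometric relation $\tilde S_n^*\tilde S_n=\rho(\<\cdot,\cdot\>_{E^n})$; for $m\ne n$ one pushes the adjoint $S(\eta)^*$ across, using that $\Sc{V}^\perp$ is invariant (so $\Sc{V}$ is coinvariant and each compression collapses to $A$ on the overlap) to reduce the mixed-level inner product to an expression built solely from $A$, $\sigma$, and the $\Sc{A}$-valued inner products of $E$ evaluated on $\Sc{V}$. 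Carrying out this reduction identically for both dilations makes $U$ isometric, and the theorem follows.
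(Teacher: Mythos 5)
The paper offers no proof of this theorem---it is quoted from Muhly and Solel \cite{MuhlySolel1}---so the only comparison available is with their original argument, which your construction essentially reproduces: a Sch\"affer-type isometric dilation acting on $\Sc{V}\oplus(\Sc{F}(E)\otimes_{\sigma_{\Sc D}}\Sc{D})$ over the defect correspondence, with uniqueness obtained by checking that the inner products $\<\tilde S_m(\eta\otimes v),\tilde S_n(\eta'\otimes v')\>$ are determined by $(A,\sigma)$ alone via coinvariance of $\Sc{V}$. Your proof is correct; the only details left tacit are routine, namely the nondegeneracy of $\sigma_{\Sc D}$ (needed to form the induced Fock-space representation and to identify the zeroth level with $\Sc{D}$) and the fact that the isometric covariant pair $(S,\rho)$ so constructed is automatically completely contractive because $\tilde S$ is an isometry.
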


  The following lemma uses a standard argument in dilation theory.

  \begin{lemma}\label{coiso}
   If $(A, \sigma)$ is a representation of a $C^*$-correspondence $E$ on a Hilbert space $\Sc{V}$ and $(S,\rho)$ is its minimal isometric dilation on $\Sc{H}$, then $(S,\rho)$ is fully coisometric if and only if $(A, \sigma)$ is fully coisometric.
  \end{lemma}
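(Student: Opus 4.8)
The plan is to phrase everything in terms of the operators $\tilde{A}$ and $\tilde{S}$, recalling from the preliminaries that a representation is fully coisometric exactly when its associated operator is a coisometry; thus the statement becomes the assertion that $\tilde{S}\tilde{S}^*=I_{\Sc{H}}$ if and only if $\tilde{A}\tilde{A}^*=I_{\Sc{V}}$. Since $\Sc{V}$ reduces $\rho$, we have $P_{\Sc{V}}\in\rho(\Sc{A})'$, so $E\otimes_\rho\Sc{H}$ splits orthogonally as $(E\otimes\Sc{V})\oplus(E\otimes\Sc{V}^\perp)$, and because $\rho|_{\Sc{V}}=\sigma$ the summand $E\otimes\Sc{V}$ is naturally and isometrically identified with $E\otimes_\sigma\Sc{V}$. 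First I would record the block form of $\tilde{S}$ with respect to this decomposition of its domain and the decomposition $\Sc{H}=\Sc{V}\oplus\Sc{V}^\perp$ of its range. Condition (iii) gives that the compression of $\tilde{S}$ to $\Sc{V}$ on $E\otimes\Sc{V}$ is exactly $\tilde{A}$; condition (ii), that $\Sc{V}^\perp$ is invariant under each $S(\xi)$, gives that $\tilde{S}$ maps $E\otimes\Sc{V}^\perp$ into $\Sc{V}^\perp$, so the $(1,2)$ block vanishes. Writing $X=P_{\Sc{V}^\perp}\tilde{S}|_{E\otimes\Sc{V}}$ and $W=\tilde{S}|_{E\otimes\Sc{V}^\perp}$, the operator $\tilde{S}$ is lower triangular with diagonal entries $\tilde{A}$ and $W$.

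For the forward implication, suppose $(S,\rho)$ is fully coisometric. For $h\in\Sc{V}$ the vector $\tilde{S}^*h$ is orthogonal to $E\otimes\Sc{V}^\perp$ (because $\langle h,S(\xi)k\rangle=0$ for $k\in\Sc{V}^\perp$ by (ii)), so $\tilde{S}^*h$ lies in $E\otimes\Sc{V}$ and in fact equals $\tilde{A}^*h$ by (iii). Compressing $\tilde{S}\tilde{S}^*=I_{\Sc{H}}$ to $\Sc{V}$ then yields $\tilde{A}\tilde{A}^*=I_{\Sc{V}}$, i.e.\ $(A,\sigma)$ is fully coisometric. This direction uses neither minimality nor any structure beyond the triangular form.

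For the converse, assume $\tilde{A}\tilde{A}^*=I_{\Sc{V}}$. Since $(S,\rho)$ is isometric, $\tilde{S}$ is an isometry, and reading off $\tilde{S}^*\tilde{S}=I$ in block form gives $\tilde{A}^*\tilde{A}+X^*X=I$, $W^*W=I$, and $X^*W=0$. Because $\tilde{A}$ is a coisometry, $\tilde{A}^*\tilde{A}$ is a projection, whence $X^*X=I-\tilde{A}^*\tilde{A}$ is the complementary projection onto $\ker\tilde{A}$; thus $X$ is a partial isometry with $\operatorname{range}X^*\subseteq\ker\tilde{A}$, giving $\tilde{A}X^*=0$. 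Feeding this back into the block computation of $\tilde{S}\tilde{S}^*$ collapses the off-diagonal entries and reduces the $(1,1)$ entry to $\tilde{A}\tilde{A}^*=I_{\Sc{V}}$, leaving only the task of proving that the $(2,2)$ entry $XX^*+WW^*$ equals $I_{\Sc{V}^\perp}$. This is the crux of the argument, and the step I expect to be the main obstacle. Here I would invoke minimality: since $X^*W=0$, the operator $XX^*+WW^*$ is the orthogonal projection onto $\overline{\operatorname{range}X+\operatorname{range}W}=\overline{P_{\Sc{V}^\perp}\operatorname{range}\tilde{S}}$, so it suffices to show that any $w\in\Sc{V}^\perp$ orthogonal to both ranges is zero. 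Such a $w$ satisfies $\langle w,S(\xi)g\rangle=0$ for every $\xi\in E$ and $g\in\Sc{H}$ (splitting $g$ across $\Sc{V}\oplus\Sc{V}^\perp$), so $w\perp\operatorname{range}\tilde{S}$, while also $w\perp\Sc{V}$. As $\tilde{S}_n(E^n\otimes\Sc{V})\subseteq\operatorname{range}\tilde{S}$ for $n\geq1$, minimality forces $\Sc{H}=\Sc{V}\vee\operatorname{range}\tilde{S}$, and therefore $w=0$. The real work is thus in isolating the $(2,2)$ block and recognizing that the exactness it demands is precisely what minimality supplies, via the translation of ``orthogonal to both ranges'' into ``orthogonal to $\Sc{V}$ and to $\operatorname{range}\tilde{S}$.''
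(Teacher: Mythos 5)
Your proof is correct. Both directions ultimately rest on the same two facts the paper uses: the coisometry of $\tilde{A}$ forces $\Sc{V}\subseteq\operatorname{range}\tilde{S}$, and minimality forces $\Sc{H}=\Sc{V}\vee\operatorname{range}\tilde{S}$, so the defect projection $I-\tilde{S}\tilde{S}^*$ must vanish. The difference is in packaging. The paper's converse is a two-line structural argument: $\Sc{M}=(I-\tilde{S}\tilde{S}^*)\Sc{H}$ is $\F{S}^*$-invariant, and $P_{\Sc{V}}\tilde{S}\tilde{S}^*|_{\Sc{V}}=\tilde{A}\tilde{A}^*=I_{\Sc{V}}$ (together with positivity of $I-\tilde{S}\tilde{S}^*$) shows $\Sc{M}\perp\Sc{V}$, so minimality kills $\Sc{M}$. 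You instead write $\tilde{S}$ as a lower-triangular $2\times2$ block operator and verify each entry of $\tilde{S}\tilde{S}^*$; the identities $X^*X=I-\tilde{A}^*\tilde{A}$ and $\tilde{A}X^*=0$ are correct but are not actually needed once one notices, as you do at the end, that a vector orthogonal to $\Sc{V}$ and to $\operatorname{range}\tilde{S}$ is orthogonal to every $\tilde{S}_n(E^n\otimes\Sc{V})$ and hence is zero. What your version buys is a slightly more self-contained argument (you never need to observe that $\Sc{M}$ is $\F{S}^*$-invariant, only that $\tilde{S}_n(E^n\otimes\Sc{V})\subseteq\operatorname{range}\tilde{S}$ for $n\geq1$), and the block analysis makes visible the wandering-space structure that the paper develops separately in Lemma \ref{wanderingspace}; what it costs is length, since the heart of the matter is just the last paragraph of your converse.
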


  \begin{proof}
   Clearly, if $\tilde{S}\tilde{S}^*=I_\Sc{H}$ then for $v\in\Sc{V}$, $\tilde{A}\tilde{A}^*v=P_{\Sc{V}}\tilde{S}\tilde{S}^*v=P_\Sc{V}v=v$, and so $\tilde{A}$ is a coisometry.
  
   Conversely, suppose that $\tilde{A}$ is a coisometry. Let $\Sc{M}=(I-\tilde{S}\tilde{S}^*)\Sc{H}$. It is not hard to see that $\Sc{M}$ is a $\F{S}^*$-invariant subspace, where $\F{S}$ is the unital \textsc{wot}-closed algebra generated by the representation $(S,\rho)$. Also since $\tilde{A}\tilde{A}^*=I_{\Sc{V}}$ we have that $P_\Sc{V}\tilde{S}\tilde{S}^*|_\Sc{V}=I_\Sc{V}$, hence $\Sc{M}$ is a $\F{S}^*$-invariant space orthogonal to $\Sc{V}$. But, since our dilation is minimal the only $\F{S}^*$-invariant subspace orthogonal to $\Sc{V}$ is the zero space. Therefore $\Sc{M}=\{0\}$.
  \end{proof}

  The following two results have been proved in \cite{DavKriShp} for the case when $E=\B{C}^n$ (where $2\leq n\leq\infty$) and $\Sc{A}=\B{C}$. We follow much the same line of proof as found there.

  \begin{lemma}\label{wanderingspace}
	Let $(A,\sigma)$ be a representation of a $C^*$-correspondence $E$ on a Hilbert space $\Sc{V}$, and let $(S,\rho)$ be the unique minimal isometric dilation of $(A,\sigma)$ on a Hilbert space $\Sc{H}$. Let $\Sc{W}=(\Sc{V}+\tilde{S}(E\otimes\Sc{V}))\ominus\Sc{V}$. Then $\Sc{W}$ is a $\rho$-reducing subspace and $\Sc{V}^{\perp}$ is isometrically isomorphic to $\Sc{F}(E)\otimes\Sc{W}$. Furthermore, the representation of $E$ obtained by restricting $(S,\rho)$ to $\Sc{V}^{\perp}$ is the representation induced by $\rho(\cdot)|_{\Sc{W}}$.
  \end{lemma}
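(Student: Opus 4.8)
The plan is to realise $\Sc{V}^{\perp}$ as a Fock space over the wandering subspace $\Sc{W}$ by restricting the dilation to $\Sc{V}^{\perp}$. Since $\Sc{V}^{\perp}$ is invariant under each $S(\xi)$ (dilation property (ii)) and reduces $\rho$, the pair $(S',\rho'):=(S(\cdot)|_{\Sc{V}^{\perp}},\rho(\cdot)|_{\Sc{V}^{\perp}})$ is again a representation of $E$, and it is isometric: for $h\in\Sc{V}^{\perp}$ one computes $S'(\xi)^{*}S'(\eta)h=P_{\Sc{V}^{\perp}}S(\xi)^{*}S(\eta)h=P_{\Sc{V}^{\perp}}\rho(\langle\xi,\eta\rangle)h=\rho'(\langle\xi,\eta\rangle)h$. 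I would then show that $\Sc{W}$ is precisely the wandering subspace of $(S',\rho')$ and that $(S',\rho')$ is a pure shift, hence unitarily equivalent to the representation induced by $\rho(\cdot)|_{\Sc{W}}$. That $\Sc{W}$ reduces $\rho$ is the quickest point: since $\Sc{V}$ reduces $\rho$ we have $P_{\Sc{V}^{\perp}}\in\rho(\Sc{A})'$, so for a typical vector $w=P_{\Sc{V}^{\perp}}S(\xi)v$ the covariance relation $S(a\xi)=\rho(a)S(\xi)$ gives $\rho(a)w=P_{\Sc{V}^{\perp}}S(a\xi)v\in\Sc{W}$, and the same holds for $a^{*}$; passing to the closure yields a $\rho$-reducing subspace.

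Next I would set up the orthogonal tower. The key computation is that $\Sc{W}\perp\tilde{S}(E\otimes\Sc{V}^{\perp})$: for $w=S(\eta)v-A(\eta)v\in\Sc{W}$ and $h\in\Sc{V}^{\perp}$, the term $\langle S(\xi)h,S(\eta)v\rangle=\langle h,\rho(\langle\xi,\eta\rangle)v\rangle$ vanishes because $\rho(\langle\xi,\eta\rangle)v\in\Sc{V}$, while $\langle S(\xi)h,A(\eta)v\rangle=0$ since $S(\xi)h\in\Sc{V}^{\perp}$ and $A(\eta)v\in\Sc{V}$. It is worth flagging that the more naive statement $\Sc{W}\perp\tilde{S}(E\otimes\Sc{H})$ is \emph{false} in general: since $\tilde{S}^{*}|_{\Sc{V}}=\tilde{A}^{*}$ and $\tilde{S}^{*}\tilde{S}=I$, one finds $\tilde{S}^{*}w=(I-\tilde{A}^{*}\tilde{A})(\xi\otimes v)$, which is nonzero precisely when $\tilde{A}$ fails to be an isometry, so one must use the restricted correspondence $E\otimes\Sc{V}^{\perp}$ rather than $E\otimes\Sc{H}$. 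Granting this orthogonality and the reducing property of $\Sc{W}$, a short induction (writing $\tilde{S}_{m}=\tilde{S}_{n}(I_{E^{n}}\otimes\tilde{S}_{m-n})$ and using $\tilde{S}_{n}^{*}\tilde{S}_{n}=I$) shows that the subspaces $\tilde{S}_{n}(E^{n}\otimes\Sc{W})$ are pairwise orthogonal and all contained in $\Sc{V}^{\perp}$; since every $\tilde{S}_{n}$ is an isometry (as $(S,\rho)$ is isometric), $\tilde{S}_{n}$ carries $E^{n}\otimes\Sc{W}$ isometrically onto the $n$-th summand.

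The hard part, and the only place the minimality hypothesis is genuinely used, is to show that these summands exhaust $\Sc{V}^{\perp}$, i.e.\ that $(S',\rho')$ has no fully coisometric part. I would argue that $\Sc{M}:=\Sc{V}\oplus\bigoplus_{n\geq0}\tilde{S}_{n}(E^{n}\otimes\Sc{W})$ is all of $\Sc{H}$. The subspace $\Sc{M}$ is closed, contains $\Sc{V}$, and I claim it is invariant under every $S(\xi)$: on $\Sc{V}$ this is the splitting $S(\xi)v=A(\xi)v+(S(\xi)v-A(\xi)v)\in\Sc{V}\oplus\Sc{W}$, and on the $n$-th summand it follows from $S(\xi)\tilde{S}_{n}(\zeta\otimes w)=\tilde{S}_{n+1}(\xi\otimes\zeta\otimes w)$. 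A closed $S(\cdot)$-invariant subspace containing $\Sc{V}$ must contain every $\tilde{S}_{n}(E^{n}\otimes\Sc{V})$, so by minimality $\Sc{M}=\Sc{H}$, whence $\Sc{V}^{\perp}=\bigoplus_{n\geq0}\tilde{S}_{n}(E^{n}\otimes\Sc{W})$.

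Finally I would assemble the map $U\colon\Sc{F}(E)\otimes\Sc{W}\to\Sc{V}^{\perp}$ sending $\xi_{1}\otimes\cdots\otimes\xi_{n}\otimes w$ to $\tilde{S}_{n}(\xi_{1}\otimes\cdots\otimes\xi_{n}\otimes w)=S(\xi_{1})\cdots S(\xi_{n})w$. By the previous two paragraphs $U$ is a well-defined unitary, and the covariance relation $S(a\xi)=\rho(a)S(\xi)$ together with the definition of the creation operators shows that $U$ intertwines the representation of $E$ induced by $\rho(\cdot)|_{\Sc{W}}$ with $(S',\rho')$, which is the required identification. I expect the orthogonality bookkeeping of the tower to be routine once the relation $\Sc{W}\perp\tilde{S}(E\otimes\Sc{V}^{\perp})$ is established, so the real content is concentrated in the minimality argument of the third paragraph.
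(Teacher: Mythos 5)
Your proposal is correct and follows essentially the same route as the paper's proof: show $\Sc{W}$ reduces $\rho$, establish pairwise orthogonality of the subspaces $\tilde{S}_n(E^n\otimes\Sc{W})$ inside $\Sc{V}^\perp$, invoke minimality to see that (together with $\Sc{V}$) they exhaust $\Sc{H}$, and identify the result with $\Sc{F}(E)\otimes\Sc{W}$ carrying the induced representation. The paper organizes the orthogonality dually, proving $\tilde{S}_n(E^n\otimes\Sc{W})\perp\Sc{V}+\tilde{S}(E\otimes\Sc{V})$ for $n\geq1$ rather than your $\Sc{W}\perp\tilde{S}(E\otimes\Sc{V}^\perp)$, but the underlying computations coincide, and your explicit spelling-out of the minimality step and of the caveat that $\Sc{W}$ is not orthogonal to all of $\tilde{S}(E\otimes\Sc{H})$ is accurate.
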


  \begin{proof}
   First note that $\Sc{W}$ is $\rho$-reducing. This follows since $\Sc{V}$ is $\rho$-reducing and hence so is $\Sc{V}^\perp$ and $\rho(a)S(\xi)\Sc{V}=S(a\xi)\Sc{V}$ for each $a\in\Sc{A}$ and $\xi\in E$. 

   The subspace $\Sc{V}^\perp$ is invariant under $S(\xi)$ for each $\xi\in E$. So for any $n$ and $\xi_1,\ldots,\xi_n\in E$, the space $S(\xi_1)\ldots S(\xi_n)\Sc{W}$ is orthogonal to $\Sc{V}$. It follows that if $n\geq1$, then $S(\xi_1)\ldots S(\xi_n)\Sc{W}$ is orthogonal to $S(\xi)\Sc{V}$ for all $\xi\in E$. Therefore $S(\xi_1)\ldots S(\xi_n)\Sc{W}$ is orthogonal to $\Sc{V}+\tilde{S}(E\otimes\Sc{V})$, which contains $\Sc{W}$.

   Also note that if $\eta_1,\ldots,\eta_m$ are in $E$, with $m<n$ and $w_1$ and $w_2$ in $\Sc{W}$ then
   \begin{multline*}
    \<S(\xi_1)\ldots S(\xi_n)w_1,S(\eta_1)\ldots S(\eta_m)w_2\>\\=\<\rho(\<\eta_1\otimes\ldots\otimes\eta_m,\xi_1\otimes\ldots\otimes\xi_m\>)S(\xi_{m+1})\ldots S(\xi_n)w_1,w_2\>=0.
   \end{multline*}
   By minimality we have that
   \begin{align*}
    \Sc{V}^\perp&=\sideset{}{^\oplus}\sum_{n\geq0}\sum_{\xi_1,\ldots,\xi_n\in E}S(\xi_1)\ldots S(\xi_n)\Sc{W}\\
    &=\sideset{}{^\oplus}\sum_{n\geq0}\tilde{S^n}(E^n\otimes\Sc{W})\\
    &\simeq\Sc{F}(E)\otimes\Sc{W}.\qedhere
   \end{align*}
  \end{proof}

  \begin{remark}
   When $(A,\sigma)$ is a fully coisometric representation of a $C^*$-correspon\-dence $E$ on a Hilbert space $\Sc{V}$, we have that $\Sc{V}=\tilde{S}\tilde{S}^*\Sc{V}=\tilde{S}\tilde{A}^*\Sc{V}\subseteq\tilde{S}(E\otimes \Sc{V})$. Hence, when $(A,\sigma)$ is fully coisometric the space $\Sc{W}$ in Lemma \ref{wanderingspace} is simply $\Sc{W}=\tilde{S}(E\otimes \Sc{V})\ominus\Sc{V}$.
  \end{remark}

  \begin{lemma}\label{reducingspace}
	Let $(A,\sigma)$ be a representation of a $C^*$-correspondence $E$ on a Hilbert space $\Sc{V}$, and let $(S,\rho)$ be the unique minimal isometric dilation of $(A,\sigma)$ on a Hilbert space $\Sc{H}$. Let $\F{A}$ be the \textsc{wot}-closed unital algebra generated by $(A,\sigma)$ and let $\F{S}$ be the \textsc{wot}-closed unital algebra generated by $(S,\rho)$.
   Suppose $\Sc{V}_1$ is an $\F{A}^*$-invariant subspace of $\Sc{V}$. Then $\Sc{H}_1=\F{S}[\Sc{V}_1]$ reduces $\F{S}$.

   If $\Sc{V}_1$ and $\Sc{V}_2$ are orthogonal $\F{A}^*$-invariant subspaces, then $\Sc{H}_j=\F{S}[\Sc{V}_j]$ for $j=1,2$ are mutually orthogonal.

   If $\Sc{V}=\Sc{V}_1\oplus\Sc{V}_2$, then $\Sc{H}=\Sc{H}_1\oplus\Sc{H}_2$ and $\Sc{H}_j\cap\Sc{V}=\Sc{V}_j$ for $j=1,2$.
  \end{lemma}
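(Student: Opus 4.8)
The plan is to reduce all three assertions to a single key fact: the adjoint creation operators $S(\xi)^*$ leave each of the subspaces $\Sc{V}_j$ invariant. I begin with two preliminary observations. First, since $\F{A}\supseteq\sigma(\Sc{A})$ and $\sigma(\Sc{A})=\sigma(\Sc{A})^*$, the adjoint set $\F{A}^*$ also contains $\sigma(\Sc{A})$; hence any $\F{A}^*$-invariant $\Sc{V}_1$ is in fact $\sigma$-reducing, so $P_{\Sc{V}_1}\in\sigma(\Sc{A})'$ and $E\otimes\Sc{V}$ splits as $(E\otimes\Sc{V}_1)\oplus(E\otimes(\Sc{V}\ominus\Sc{V}_1))$ via the projection $I\otimes P_{\Sc{V}_1}$. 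Second, using the covariance relation $\rho(a)S(\xi)=S(a\xi)$ together with $\rho(a)|_{\Sc{V}}=\sigma(a)$ and $\rho(a)\Sc{V}_1\subseteq\Sc{V}_1$, one checks that $\Sc{H}_1=\F{S}[\Sc{V}_1]$ is exactly $\bigvee_{n\geq0}\tilde S_n(E^n\otimes\Sc{V}_1)$, that this subspace is invariant under $\Alg\{I,S(\xi),\rho(a)\}$, and hence (since \textsc{wot}-limits preserve a closed invariant subspace) invariant under $\F{S}$.

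The heart of the argument, and the step I expect to be the main obstacle, is to show $S(\xi)^*\Sc{V}_1\subseteq\Sc{V}_1$. My route is through the Muhly--Solel operators $\tilde S$ and $\tilde A$. A standard dilation computation, using only that $\Sc{V}^\perp$ is $S(\xi)$-invariant and $P_\Sc{V}S(\xi)|_\Sc{V}=A(\xi)$, gives $\tilde S^* v=\tilde A^* v$ for every $v\in\Sc{V}$ (test both sides against $\eta\otimes h$ with $h\in\Sc{V}$ and $h\in\Sc{V}^\perp$ separately). Next, for $v\in\Sc{V}_1$ I claim $\tilde A^* v\in E\otimes\Sc{V}_1$: for $w$ in $\Sc{V}\ominus\Sc{V}_1$ one has $\langle\tilde A^* v,\eta\otimes w\rangle=\langle A(\eta)^* v,w\rangle=0$, since $A(\eta)^*v\in\Sc{V}_1$ by $\F{A}^*$-invariance. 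Combining, $\tilde S^* v=\tilde A^* v\in E\otimes\Sc{V}_1$, and since $S(\xi)^* v$ is recovered from $\tilde S^* v$ by contracting the $E$-variable against $\xi$ (explicitly $\langle S(\xi)^* v,h\rangle=\langle\tilde S^* v,\xi\otimes h\rangle$), it lands in $\Sc{V}_1$ as $\Sc{V}_1$ is $\sigma$-invariant. Granting this, Part 1 follows: $\Sc{H}_1$ is already $\F{S}$-invariant, and it is $\F{S}^*$-invariant because $\rho(a)^*=\rho(a^*)$ preserves each level while $S(\xi)^*$ sends $\tilde S_n(E^n\otimes\Sc{V}_1)$ into level $n-1$ of $\Sc{H}_1$ for $n\geq1$ (via $S(\xi)^*S(\eta_1)=\rho(\langle\xi,\eta_1\rangle)$ and covariance) and into $\Sc{V}_1$ for $n=0$ by the claim; hence $\Sc{H}_1$ reduces $\F{S}$.

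For Part 2 I would first note that, by Part 1, $\Sc{H}_1^\perp$ is $\F{S}$-invariant, so it suffices to prove $\Sc{V}_2\perp\Sc{H}_1$; then $\Sc{H}_2=\F{S}[\Sc{V}_2]\subseteq\Sc{H}_1^\perp$. To see $\Sc{V}_2\perp\tilde S_n(E^n\otimes\Sc{V}_1)$ I induct on $n$, using $\langle v_2,S(\xi_1)\cdots S(\xi_n)v_1\rangle=\langle S(\xi_1)^* v_2,S(\xi_2)\cdots S(\xi_n)v_1\rangle$ and the fact that $S(\xi_1)^* v_2\in\Sc{V}_2$ (the key fact applied to $\Sc{V}_2$), the base case $n=0$ being $\Sc{V}_2\perp\Sc{V}_1$. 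Finally, Part 3 is immediate: since $E^n\otimes\Sc{V}=(E^n\otimes\Sc{V}_1)\oplus(E^n\otimes\Sc{V}_2)$, minimality gives $\Sc{H}=\bigvee_n\tilde S_n(E^n\otimes\Sc{V})\subseteq\Sc{H}_1+\Sc{H}_2$, which together with Part 2 yields $\Sc{H}=\Sc{H}_1\oplus\Sc{H}_2$; and if $h\in\Sc{H}_1\cap\Sc{V}$, writing $h=h_1+h_2$ along $\Sc{V}=\Sc{V}_1\oplus\Sc{V}_2$ forces $h_2\in\Sc{H}_1\cap\Sc{H}_2=\{0\}$, so $\Sc{H}_1\cap\Sc{V}=\Sc{V}_1$, and symmetrically for $\Sc{V}_2$.
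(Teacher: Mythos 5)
Your proof is correct and follows essentially the same route as the paper's: both hinge on the observation that $S(\xi)^*$ agrees with $A(\xi)^*$ on $\Sc{V}$ (you derive this via $\tilde S^*v=\tilde A^*v$, the paper asserts it directly), so that $\Sc{V}_1$ is $\F{S}^*$-invariant, after which the reducing, orthogonality, and decomposition claims follow from the isometric relation $S(\xi)^*S(\eta)=\rho(\langle\xi,\eta\rangle)$ and minimality exactly as in the paper.
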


  \begin{proof}
   Note that for any $a\in\Sc{A}$, $\rho(a)\Sc{V}_1=\sigma(a)\Sc{V}_1\subseteq\Sc{V}_1$. Also for any $\xi\in E$, $S(\xi)^*\Sc{V}_1=A(\xi)^*\Sc{V}_1\subseteq\Sc{V}_1$. Hence $\Sc{V}_1$ is $\F{S}^*$-invariant. Now $\Sc{H}_1$ is spanned by vectors of the form $S(\xi_1)\ldots S(\xi_n)v$, where $\xi_1,\ldots,\xi_n\in E$ and $v\in\Sc{V}_1$. If $n\geq2$ then for any $\xi\in E$,
   \begin{equation*}
    S(\xi)^*S(\xi_1)\ldots S(\xi_n)v=S(\<\xi,\xi_1\>\xi_2)\ldots S(\xi_n)v\in\Sc{H}_1.
   \end{equation*}
   If $n=1$ we have $S(\xi)^*S(\xi_1)v=\rho(\<\xi,\xi_1\>)v=\sigma(\<\xi,\xi_1\>)v\in\Sc{H}_1$. Hence $\Sc{H}_1$ reduces $\F{S}$.

   Now suppose $\Sc{V}_1$ and $\Sc{V}_2$ are orthogonal $\F{A}^*$-invariant subspaces. Take $v_1\in\Sc{V}_1$, $v_2\in\Sc{V}_2$ and $\xi_1,\ldots,\xi_n,\eta_1,\ldots,\eta_m$ be in $E$. Suppose $n\geq m$ then
   \begin{multline*}
    \<S(\xi_1)\ldots S(\xi_n)v_1,S(\eta_1)\ldots S(\eta_m)v_2\>\\=\<v_1,S(\xi_n)^*\ldots S(\xi_{m+1})^*\rho(\<\xi_m,\eta_m\>\ldots\<\xi_1,\eta_1\>)v_2\>=0.
   \end{multline*}
   It follows that $\Sc{H}_1$ and $\Sc{H}_2$ are orthogonal.

   If $\Sc{V}=\Sc{V}_1\oplus\Sc{V}_2$ then, since $\Sc{H}_1$ contains $\Sc{V}_1$ and is orthogonal to $\Sc{V}_2$, $\Sc{H}_1\cap\Sc{V}=\Sc{V}_1$. Finally, $\Sc{H}_1\oplus\Sc{H}_2$ is an $\F{S}$-reducing subspace containing $\Sc{V}$, so it is all of $\Sc{H}$ by the minimality of the dilation.
  \end{proof}

  Given an isometric representation $(S,\rho)$ of a $C^*$-correspondence $E$ on $\Sc{H}$ with corresponding unital \textsc{wot}-closed algebra $\F{S}$ which is the minimal isometric dilation of a representation $(A,\sigma)$ on $\Sc{V}\subseteq\Sc{H}$, Lemma \ref{reducingspace} shows that $\F{S}^*$-invariant subspaces of $\Sc{V}$ give rise to $\F{S}$-reducing subspaces of $\Sc{H}$. In Corollary \ref{detbyVcor} we give a weak converse of this: that $\F{S}$-reducing subspaces in $\Sc{H}$ are uniquely determined by their projections onto $\Sc{V}$. This follows from the following more general result.

  \begin{lemma}\label{detbyV}
	Let $(A,\sigma)$ be a representation of a $C^*$-correspondence $E$ on a Hilbert space $\Sc{V}$, and let $(S,\rho)$ be the unique minimal isometric dilation of $(A,\sigma)$ on a Hilbert space $\Sc{H}$. Let $\F{S}$ be the unital \textsc{wot}-closed algebra generated by $(S,\rho)$. Suppose $B$ is a normal operator in $\Sc{B}(\Sc{H})$ such that the range of $B$ is contained in $\Sc{V}^\perp$ and $B$ is in $C^*(S(E),\rho(\Sc{A}))'$, the commutant of the $C^*$-algebra generated by $S(E)$ and $\rho(\Sc{A})$. Then $B=0$. 
  \end{lemma}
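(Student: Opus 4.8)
The plan is to exploit the normality of $B$ together with the minimality of the dilation, reducing everything to a one-line commutation computation on a spanning set. First I would unpack the two hypotheses. Since $B$ lies in $C^*(S(E),\rho(\Sc{A}))'$, it commutes with every $S(\xi)$ and every $\rho(a)$; in particular $BS(\xi_1)\cdots S(\xi_n)=S(\xi_1)\cdots S(\xi_n)B$ for all $\xi_1,\ldots,\xi_n\in E$. Second, the condition that the range of $B$ be contained in $\Sc{V}^\perp$ says precisely that $P_{\Sc{V}}B=0$; taking adjoints gives $B^*P_{\Sc{V}}=0$, that is, $\Sc{V}\subseteq\ker B^*$.

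The only place normality enters is in upgrading this last inclusion to $\Sc{V}\subseteq\ker B$. For a normal operator one has $\|Bx\|^2=\<B^*Bx,x\>=\<BB^*x,x\>=\|B^*x\|^2$ for every $x$, so $\ker B=\ker B^*$; hence every $v\in\Sc{V}$ satisfies $Bv=0$. With the commutation relation and this kernel containment in hand, the conclusion is immediate from minimality. By definition of minimal isometric dilation, $\Sc{H}=\bigvee_{n\geq0}\tilde{S}_n(E^n\otimes\Sc{V})$, so $\Sc{H}$ is the closed linear span of vectors of the form $S(\xi_1)\cdots S(\xi_n)v$ with $v\in\Sc{V}$ and $\xi_1,\ldots,\xi_n\in E$. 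For any such vector,
\[
  BS(\xi_1)\cdots S(\xi_n)v=S(\xi_1)\cdots S(\xi_n)Bv=0,
\]
so $B$ annihilates a spanning set for $\Sc{H}$, whence $B=0$.

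The argument is essentially formal once the hypotheses are read correctly, so I do not expect a genuine obstacle. The single point that requires care — and the reason the hypotheses are stated as they are — is recognizing that normality is exactly what converts the information that comes for free from the range condition, namely $\Sc{V}\subseteq\ker B^*$, into the usable statement $\Sc{V}\subseteq\ker B$. Without normality one would control only $B^*$ on $\Sc{V}$, while the commutation step needs $B$ itself to kill $\Sc{V}$, and the proof would not close up.
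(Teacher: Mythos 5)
Your proof is correct, but it takes a genuinely different route from the paper. The paper argues by contradiction using spectral theory: assuming $B\neq 0$, it takes the spectral projection $Q=E_B(\operatorname{spec}(B)\setminus D_\delta)$ away from a disc about $0$, notes that $Q$ lies in $W^*(B)\subseteq C^*(S(E),\rho(\Sc{A}))'$ and that $Q\Sc{H}$ is contained in $\overline{\operatorname{Ran}B}\subseteq\Sc{V}^\perp$, and so produces a non-zero $\F{S}^*$-invariant subspace orthogonal to $\Sc{V}$, contradicting minimality. You instead use normality only for the elementary identity $\ker B=\ker B^*$, which converts the range condition $B^*P_{\Sc{V}}=0$ into $\Sc{V}\subseteq\ker B$, and then kill $B$ directly on the spanning set $S(\xi_1)\cdots S(\xi_n)v$ supplied by minimality. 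Both arguments lean on normality in an essential way (the paper to form spectral projections, you to identify the two kernels), but yours is direct rather than by contradiction, avoids the spectral theorem and the intermediate fact that minimality forbids non-zero coinvariant subspaces orthogonal to $\Sc{V}$, and is if anything cleaner; in effect you observe that $\ker B$ is a closed $\F{S}$-invariant subspace containing $\Sc{V}$ and hence is all of $\Sc{H}$. The paper's spectral-projection device has the mild advantage of fitting the same template used elsewhere in the section (producing coinvariant subspaces orthogonal to $\Sc{V}$ and invoking minimality), but your argument proves the same statement with less machinery.
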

 
  \begin{proof}
    Suppose that $B$ is non-zero. Take any $\delta$ such that $0<\delta<\|B\|$  and let $D_\delta$ be the open disc of    radius $\delta$ about $0$. Let $Q$ be the spectral projection $Q=E_B(\text{spec}(B)\backslash D_\delta)$, where spec$(B)$ denotes the spectrum of $B$. Then $Q\in W^*(B)\subseteq C^*(S(E),\rho(\Sc{A}))'$ and $Q\Sc{H}$ is orthogonal to $\Sc{V}$. In particular $Q\Sc{H}$ is a non-zero $\F{S}^*$-invariant space orthogonal to $\Sc{V}$. But no such space can exist since our dilation is minimal.
  \end{proof}

  \begin{corollary}\label{detbyVcor}
   Suppose $\Sc{M}$ and $\Sc{N}$ are two $\F{S}$-reducing subspaces of $\Sc{H}$ and the compressions of $P_{\Sc{M}}$ and $P_{\Sc{N}}$ to $\Sc{V}$ are equal, i.e. $P_\Sc{V} P_\Sc{M} P_\Sc{V}=P_\Sc{V} P_\Sc{N} P_\Sc{V}$. Then $\Sc{M}=\Sc{N}$.
  \end{corollary}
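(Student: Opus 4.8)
The plan is to set $B = P_{\Sc{M}} - P_{\Sc{N}}$ and to deduce $B = 0$ from Lemma \ref{detbyV}. Since $\Sc{M}$ and $\Sc{N}$ reduce $\F{S}$, the projections $P_{\Sc{M}}$ and $P_{\Sc{N}}$ commute with each $S(\xi)$ and $\rho(a)$ and with their adjoints, so $B$ is a self-adjoint (hence normal) element of $C^*(S(E),\rho(\Sc{A}))'$. The hypothesis $P_{\Sc{V}} P_{\Sc{M}} P_{\Sc{V}} = P_{\Sc{V}} P_{\Sc{N}} P_{\Sc{V}}$ says precisely that $P_{\Sc{V}} B P_{\Sc{V}} = 0$. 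To apply Lemma \ref{detbyV} it remains to show that the range of $B$ lies in $\Sc{V}^{\perp}$, i.e.\ that $P_{\Sc{V}} B = 0$; as $P_{\Sc{V}} B P_{\Sc{V}} = 0$ is already in hand, the whole problem reduces to proving that the corner $P_{\Sc{V}} B P_{\Sc{V}^{\perp}}$ vanishes.

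To analyse this corner I would write everything in block form relative to $\Sc{H} = \Sc{V} \oplus \Sc{V}^{\perp}$. Because $\Sc{V}$ reduces $\rho$ and $\Sc{V}^{\perp}$ is invariant under each $S(\xi)$, the operator $S(\xi)$ is lower triangular, with diagonal corners $A(\xi)$ on $\Sc{V}$ and $S(\xi)|_{\Sc{V}^{\perp}}$ on $\Sc{V}^{\perp}$ and lower-left corner $C(\xi) := P_{\Sc{V}^{\perp}} S(\xi)|_{\Sc{V}}$. Writing $B_{12} = P_{\Sc{V}} B|_{\Sc{V}^{\perp}}$ and using $P_{\Sc{V}} B P_{\Sc{V}} = 0$, the identity $B S(\xi) = S(\xi) B$ yields, in its $(1,1)$ and $(1,2)$ corners, the two relations $B_{12} C(\xi) = 0$ and $B_{12}\, S(\xi)|_{\Sc{V}^{\perp}} = A(\xi) B_{12}$ for all $\xi \in E$.

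The first relation forces $B_{12}$ to annihilate the wandering subspace $\Sc{W}$, since by Lemma \ref{wanderingspace} (and the description $\Sc{W} = (\Sc{V} + \tilde{S}(E \otimes \Sc{V})) \ominus \Sc{V}$) the space $\Sc{W}$ is the closed span of the vectors $C(\xi) v$. The second relation shows that $\ker B_{12}$ is invariant under every $S(\xi)|_{\Sc{V}^{\perp}}$: if $B_{12} x = 0$ then $B_{12} S(\xi) x = A(\xi) B_{12} x = 0$. Thus $\ker B_{12}$ is a closed subspace of $\Sc{V}^{\perp}$ containing $\Sc{W}$ and invariant under all $S(\xi)|_{\Sc{V}^{\perp}}$, so it contains the closed span of the $S(\xi_1) \cdots S(\xi_n) \Sc{W}$, which is all of $\Sc{V}^{\perp}$ by the $\Sc{F}(E) \otimes \Sc{W}$ decomposition of Lemma \ref{wanderingspace}. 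Hence $B_{12} = 0$, so $P_{\Sc{V}} B = 0$ and the range of $B$ is contained in $\Sc{V}^{\perp}$. Lemma \ref{detbyV} then gives $B = 0$, that is $P_{\Sc{M}} = P_{\Sc{N}}$ and $\Sc{M} = \Sc{N}$.

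The main obstacle is exactly the vanishing of the off-diagonal corner $B_{12}$: the hypothesis only controls the compression of $B$ to $\Sc{V}$, so one must convert the intertwining relations coming from $B \in C^*(S(E),\rho(\Sc{A}))'$ into the statement that $\ker B_{12}$ is an $S$-invariant subspace, and then feed in the minimality of the dilation through the wandering-subspace structure of Lemma \ref{wanderingspace}. Once $B_{12} = 0$ is established the reduction to Lemma \ref{detbyV} is immediate.
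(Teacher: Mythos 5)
Your argument is correct, and it ends at the same place as the paper's --- feeding the normal operator $B=P_{\Sc{M}}-P_{\Sc{N}}$, which lies in $C^*(S(E),\rho(\Sc{A}))'$ and has range in $\Sc{V}^\perp$, into Lemma \ref{detbyV} --- but you reach the range condition by a genuinely different and heavier route. The paper's proof is two lines: the hypothesis gives $Bv=P_{\Sc{V}^\perp}Bv\in\Sc{V}^\perp$ for $v\in\Sc{V}$, and since $B$ commutes with each $S(\xi)$ while $\Sc{V}^\perp$ is $S(\xi)$-invariant, $B\,S(\xi_1)\cdots S(\xi_n)v=S(\xi_1)\cdots S(\xi_n)Bv\in\Sc{V}^\perp$; minimality makes such vectors dense in $\Sc{H}$, so the (closed) range of $B$ lies in $\Sc{V}^\perp$. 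You instead pass to the block decomposition over $\Sc{V}\oplus\Sc{V}^\perp$, extract the corner identities $B_{12}C(\xi)=0$ and $B_{12}S(\xi)|_{\Sc{V}^\perp}=A(\xi)B_{12}$, and kill $B_{12}$ by observing that $\ker B_{12}$ is a closed $S(\xi)|_{\Sc{V}^\perp}$-invariant subspace containing the wandering space $\Sc{W}$, which generates $\Sc{V}^\perp$ by Lemma \ref{wanderingspace}. Every step checks out: $\Sc{W}$ is indeed the closed span of the vectors $C(\xi)v$, the corner computation is right, and $\Sc{V}^\perp=\bigvee_{n\geq0}\tilde{S}_n(E^n\otimes\Sc{W})$ does force $B_{12}=0$. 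What the comparison buys: your version makes the mechanism transparent (the off-diagonal corner is the only obstruction, and it dies against the wandering-subspace structure), but it imports Lemma \ref{wanderingspace}, which the paper's argument does not need --- the observation you flag as ``not immediately in hand,'' namely that $P_{\Sc{V}}BP_{\Sc{V}^\perp}=0$, is in fact bypassed entirely by noting that $B\Sc{V}\subseteq\Sc{V}^\perp$ already follows from the hypothesis and then propagating this over the dense spanning set with the intertwining relation, so the extra machinery is avoidable.
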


  \begin{proof}
   Let $\Sc{M}$ and $\Sc{N}$ be two $\F{S}$-reducing subspaces with $P_\Sc{V} P_\Sc{M} P_\Sc{V}=P_\Sc{V} P_\Sc{N} P_\Sc{V}$.
   Elements of the form $S(\xi_1)\ldots S(\xi_n)v$, with $v\in\Sc{V}$ and $\xi_1,\ldots,\xi_n\in E$, span a dense subset of $\Sc{H}$ and
   \begin{align*}
    (P_\Sc{M}-P_\Sc{N})S(\xi_1)\ldots S(\xi_n)v&=S(\xi_1)\ldots S(\xi_n)(P_\Sc{M}-P_\Sc{N})v\\&=S(\xi_1)\ldots S(\xi_n)P_{\Sc{V}^\perp}(P_\Sc{M}-P_\Sc{N})v\in\Sc{V}^\perp.
   \end{align*}
   It follows that the range of $P_\Sc{M}-P_\Sc{N}$ lies in $\Sc{V}^\perp$. Hence, by Lemma \ref{detbyV} $P_\Sc{M}-P_\Sc{N}=0$ and $\Sc{M}=\Sc{N}$.
  \end{proof}

  \subsection{Finitely Correlated Representations}
  \begin{definition}
    An isometric representation $(S,\rho)$ of a $C^*$-correspondence $E$ on a Hilbert space $\Sc{H}$ is called \emph{finitely correlated} if $(S,\rho)$ is the minimal isometric dilation of a representation $(A,\sigma)$ on a non-zero finite dimensional Hilbert space $\Sc{V}\subseteq\Sc{H}$.

	In particular, if $\F{S}$ is the unital \textsc{wot}-closed algebra generated by $(S,\rho)$, then $(S,\rho)$ is finitely correlated if there is a finite dimensional $\F{S}^*$-invariant subspace $\Sc{V}$ of $\Sc{H}$ such that $(S,\rho)$ is the minimal isometric dilation of the representation $(P_{\Sc{V}}S(\cdot)|_{\Sc{V}},\rho(\cdot)|_{\Sc{V}})$.
  \end{definition}

	\begin{remark}\label{finitely representable}
		It should be noted that not all $C^*$-algebras can be represented non-trivially on a finite dimensional Hilbert spaces, e.g. if $\Sc{A}$ is a properly infinite $C^*$-algebra then there are no non-zero finite dimensional representations of $\Sc{A}$ since $\Sc{A}$ contains isometries with pairwise orthogonal ranges. Likewise, any simple infinite dimensional $C^*$-algebra has no finite dimensional representations.


	In this section we are concerned with finitely correlated fully coisometric representations. If we assume that a $C^*$-correspondence $E$ over a $C^*$-algebra $\Sc{A}$ has a fully coisometric representation then we are assuming that there are non-zero representations of $\Sc{A}$ on finite-dimensional Hilbert spaces. Under this assumption there are still a wide range of $C^*$-correspondences which can be studied, e.g. the following example and the $C^*$-correspondences associated to graphs in \S \ref{examples}.
	\end{remark}

  \begin{example}
   The case when $\Sc{A}=\B{C}$ and $E=\B{C}^n$ has been studied previously in \cite{DavKriShp}. A representation of $E$ on a finite dimensional space $\Sc{V}$ is simply a row-contraction $A=[A_1,\ldots,A_n]$ from $\Sc{V}^{(n)}$ to $\Sc{V}$. The representation is fully-coisometric when $A$ is \emph{defect free}, i.e.
   \begin{equation*}
    \sum_{i=1}^n A_iA_i^*=I_\Sc{V}.
   \end{equation*}
   The dilation of $A$ will be the Frazho-Bunce-Popescu dilation of $A$ to a row-isometry $S=[S_1,\ldots,S_n]$. The dilation $S$ will be defect free as $A$ is. These representations can alternatively be viewed as representations of a graph with $1$ vertex and $n$ edges, see \S\ref{graph algebras}.
  \end{example}

  Let $(S,\rho)$ be a fully coisometric, finitely correlated representation on $\Sc{H}$ of the $C^*$-correspondence $E$ over the $C^*$-algebra $\Sc{A}$, and let $\F{S}$ be the unital \textsc{wot}-closed algebra generated by $(S,\rho)$. A key tool in the analysis in \cite{DavKriShp} is that every non-zero $\F{S}^*$-invariant subspace of $\Sc{H}$ has non-trivial intersection with $\Sc{V}$ (\cite[Lemma 4.1]{DavKriShp}), for the case $\Sc{A}=\B{C}$ and $E=\B{C}^n$. The main idea of the proof is that, because the representation is fully coisometric and the unit ball in $\Sc{V}$ is compact, one can ``pull back'' any non-zero element of $\Sc{H}$ with elements in $\F{S}^*$ to $\Sc{V}$, without the norm going to zero. However, the proof in \cite{DavKriShp} that the norm does not go to zero is quite complicated. We prove the analogous result for more general $C^*$-correspondences than those studied in \cite{DavKriShp} below. The proof presented below simplifies the approach in \cite{DavKriShp} by ``pulling back'' not in $\Sc{H}$ but in $\Sc{F}(E)\otimes\Sc{H}$, making use of Muhly and Solel's \ $\tilde{}$\  operators.

  \begin{lemma}\label{lemma41}
	Let $(S,\rho)$ be a finitely correlated, fully coisometric representation of a $C^*$-correspondence $E$ on $\Sc{H}$. Let $\F{S}$ be the unital \textsc{wot}-closed algebra generated by $(S,\rho)$ and let $\Sc{V}$ be a finite dimensional $\F{S}^*$-invariant subspace of $\Sc{H}$ such that $(S,\rho)$ is the minimal isometric dilation of the representation $(P_{\Sc{V}}S(\cdot)|_{\Sc{V}},\rho(\cdot)|_{\Sc{V}})$.

   Then if $\Sc{M}$ is a non-zero, $\F{S}^*$-invariant subspace of $\Sc{H}$, the subspace $\Sc{M}\cap\Sc{V}$ is non-trivial.
  \end{lemma}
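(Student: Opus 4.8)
The plan is to exploit that a representation which is simultaneously isometric and fully coisometric has $\tilde{S}$ \emph{unitary}, so that pulling an element of $\Sc{M}$ back through the Muhly--Solel operators $\tilde{S}_N$ costs nothing in norm; this is precisely the simplification over \cite{DavKriShp}, where one pulls back inside $\Sc{H}$ and must work to keep norms bounded away from zero. First I would record the structural facts. Since $(S,\rho)$ is isometric and fully coisometric, $\tilde{S}$ is both an isometry and a coisometry, hence unitary, and so each $\tilde{S}_N\colon E^N\otimes\Sc{H}\to\Sc{H}$ is unitary. Using the remark after Lemma \ref{wanderingspace} that $\Sc{V}\subseteq\tilde{S}(E\otimes\Sc{V})$ together with $\tilde{S}_{N+1}=\tilde{S}_N(I_{E^N}\otimes\tilde{S})$, the subspaces $\tilde{S}_N(E^N\otimes\Sc{V})$ increase with $N$, and by minimality of the dilation their union is dense in $\Sc{H}$. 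Hence the projections $Q_N=\tilde{S}_N(I\otimes P_{\Sc{V}})\tilde{S}_N^*$ increase strongly to $I$. Fixing a unit vector $m\in\Sc{M}$ and writing $\eta_N=(I\otimes P_{\Sc{V}})\tilde{S}_N^*m\in E^N\otimes\Sc{V}$, unitarity of $\tilde{S}_N$ gives $\|\eta_N\|=\|Q_Nm\|\to\|m\|=1$: the entire norm of the pulled-back vector concentrates in $E^N\otimes\Sc{V}$.

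The second ingredient is that the $\F{S}^*$-invariance of $\Sc{M}$ is visible upstairs. For $m\in\Sc{M}$, $\zeta\in E^N$ and $g\in\Sc{M}^\perp$ one computes $\langle \tilde{S}_N^*m,\zeta\otimes g\rangle=\langle S(\zeta_N)^*\cdots S(\zeta_1)^*m,g\rangle=0$, since the repeated adjoints keep $m$ inside $\Sc{M}$. Thus $\tilde{S}_N^*m\in E^N\otimes\Sc{M}$, and therefore $(I\otimes P_{\Sc{M}})\tilde{S}_N^*m=\tilde{S}_N^*m$ has norm $1$. Here $I\otimes P_{\Sc{M}}$ is legitimate because $\F{S}^*$-invariance of $\Sc{M}$, combined with the fact that $\Sc{A}$ is $*$-closed, forces $\Sc{M}$ to reduce $\rho$, so $P_{\Sc{M}}\in\rho(\Sc{A})'$; likewise $P_{\Sc{V}}\in\rho(\Sc{A})'$.

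I would then compare these two computations on the single vector $\tilde{S}_N^*m=\eta_N+(I\otimes P_{\Sc{V}^\perp})\tilde{S}_N^*m$, whose second summand has norm $\delta_N=\sqrt{1-\|\eta_N\|^2}\to0$. Introduce the positive contraction $T=P_{\Sc{V}}P_{\Sc{M}}P_{\Sc{V}}|_{\Sc{V}}\in\Sc{B}(\Sc{V})$; since $P_{\Sc{V}},P_{\Sc{M}}\in\rho(\Sc{A})'$ we have $T\in\sigma(\Sc{A})'$, so $I\otimes T$ is defined. From $1=\|(I\otimes P_{\Sc{M}})\tilde{S}_N^*m\|\le\|(I\otimes P_{\Sc{M}})\eta_N\|+\delta_N$ and the identity $\|(I\otimes P_{\Sc{M}})\eta_N\|^2=\langle(I\otimes T)\eta_N,\eta_N\rangle\le\|T\|\,\|\eta_N\|^2\le\|T\|$, one gets $\|T\|\ge(1-\delta_N)^2\to1$, whence $\|T\|=1$. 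Because $\Sc{V}$ is finite dimensional and $T$ is a positive contraction, the value $\|T\|=1$ is attained as an eigenvalue; a unit eigenvector $v$ then satisfies $\|P_{\Sc{M}}v\|^2=\langle Tv,v\rangle=1=\|v\|^2$, forcing $v\in\Sc{M}$. Thus $0\neq v\in\Sc{M}\cap\Sc{V}$, proving the lemma.

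The step I expect to be the crux is the passage from ``the norm concentrates in $E^N\otimes\Sc{V}$'' to ``$\Sc{M}$ genuinely meets $\Sc{V}$''. Trying to follow a single pulled-back vector $S(\zeta_N)^*\cdots S(\zeta_1)^*m$ down into $\Sc{V}$ fails in general, because the mass can spread so thinly that every individual branch tends to zero; this is exactly the delicate estimate that complicates the argument in \cite{DavKriShp}. The device that avoids it is to record only the \emph{aggregate} quantity $\langle(I\otimes T)\eta_N,\eta_N\rangle$, which is governed by the single finite-rank operator $T$ on $\Sc{V}$. This collapses the competing bookkeeping ``$\Sc{M}$-mass $=1$'' against ``$\Sc{V}$-mass $\to 1$'' into the scalar inequality $\|T\|\ge(1-\delta_N)^2$, after which finite-dimensional compactness upgrades $\|T\|=1$ to an honest nonzero vector of $\Sc{M}\cap\Sc{V}$.
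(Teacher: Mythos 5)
Your proof is correct and follows essentially the same route as the paper's: both arguments pull a unit vector of $\Sc{M}$ back through $\tilde{S}_N^*$, observe that the pulled-back vector lies in $E^N\otimes\Sc{M}$ while its norm concentrates in $E^N\otimes\Sc{V}$, conclude that $\|P_{\Sc{V}}P_{\Sc{M}}\|=1$, and then invoke finite-dimensionality of $\Sc{V}$ to produce a nonzero vector of $\Sc{M}\cap\Sc{V}$. Your use of the increasing projections $Q_N\nearrow I$ (in place of the paper's approximating vectors $k_n$ drawn from the minimality of the dilation) and of a norm-attaining eigenvector of $T=P_{\Sc{V}}P_{\Sc{M}}P_{\Sc{V}}|_{\Sc{V}}$ (in place of a convergent subsequence of unit vectors) are only cosmetic variations on the same argument.
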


  \begin{proof}
    Let $\mu=\|P_\Sc{V} P_\Sc{M}\|$. If $\mu=1$ then for each $n$ there is a unit vector $h_n\in\Sc{M}$ such that $\|P_{\Sc{V}^\perp} h_n\|<\frac{1}{n}$. Let $v_n=P_\Sc{V} h_n$. We have that $(v_n)_n$ is a sequence in the unit ball of $\Sc{V}$ therefore it has a convergent subsequence $(v_{n_i})_{n_i}$. Let $v_0$ be the limit of $(v_{n_i})_{n_i}$. We have then that
    \begin{equation*}
     \|h_{n_i}-v_0\|\leq\|h_{n_i}-v_{n_i}\|+\|v_{n_i}-v_0\|\rightarrow0,
    \end{equation*}
    as $n_i\rightarrow\infty$ and so the subsequence $(h_{n_i})_{n_i}$ converges to $v_0$. Therefore $v_0$ is a non-zero vector in $\Sc{M}\cap\Sc{V}$. Thus showing that $\mu=1$ will prove the lemma.

    Let $h$ be a unit vector in $\Sc{M}$. Since our dilation is minimal there is a sequence $(k_n)_n$ converging to $h$ where each $k_n$ is of the form
    \begin{equation*}
     k_n=\sum_{i=1}^{N_n}S(\xi_{n,i,1})\ldots S(\xi_{n,i,w_{n,i}})v_{n,i}
    \end{equation*}
    with $\xi_{n,i,j}\in E$ and $v_{n,i}\in\Sc{V}$. Without loss of generality we can assume that $\|k_n\|=1$ for each $n$.

    If we let $M_n=\max\{w_{n,i}:1\leq i\leq N_n\}$ for each $n$, then for any $\xi_1,\ldots,\xi_{M_n}\in E$ we have
    \begin{equation*}
     S(\xi_1)^*\ldots S(\xi_{M_n})^*k_n\in\Sc{V}.
    \end{equation*}
    It follows that $\tilde{S}_{M_n}^*k_n\in E^{M_n}\otimes\Sc{V}$. Note that $\tilde{S}_{M_n}$ is a coisometry so $\|\tilde{S}_{M_n}^*k_n\|=1$. We also have that $\tilde{S}_{M_n}^* h\in E^{M_n}\otimes\Sc{M}$ and $\|\tilde{S}_{M_n}^* h\|=1$.

    Let $u_n=\tilde{S}_{M_n}^*k_n$ and $h_n=\tilde{S}_{M_n}^* h$. We have that
    \begin{equation*}
     \|u_n-h_n\|\rightarrow0
    \end{equation*}
    as $n\rightarrow0$. If $\mu<1$ we can choose $\varepsilon>0$ such that $1-\varepsilon\geq\mu$ and take $n$ large enough so that
    \begin{equation*}
     \|h_n-u_n\|^2=\|h_n\|^2+\|u_n\|^2-2 Re\<h_n,u_n\><2\varepsilon.
    \end{equation*}
    It follows that
    \begin{align*}
     1-\varepsilon&< Re\<h_n,u_n\>\\
     &\leq\|(I_{E_{M_n}}\otimes P_\Sc{V}) h_n\|\|u_n\|,
    \end{align*}	
    with last inequality being the Cauchy-Schwarz inequality. So our choice of $\varepsilon$ tells us that $\mu<\|(I_{E_{M_n}}\otimes P_{\Sc{V}}P_{\Sc{M}})\|\leq\|P_{\Sc{V}}P_{\Sc{M}}\|$. This is a contradiction. Thus $\mu=1$.
  \end{proof}

  \begin{proposition}\label{reducingspacecor}
	Let $(A,\sigma)$ be a representation of a $C^*$-correspondence $E$ on a finite dimensional Hilbert space $\Sc{V}$, and let $(S,\rho)$ be the unique minimal isometric dilation of $(A,\sigma)$ on a Hilbert space $\Sc{H}$. Let $\F{A}$ be the unital algebra generated by the representation $(A,\sigma)$ and let $\F{S}$ be the unital \textsc{wot}-closed algebra generated by $(S,\rho)$.

   If $\Sc{V}_1$ is an $\F{A}^*$-invariant subspace of $\Sc{V}$ and $\Sc{H}_1=\F{S}[\Sc{V}_1]$. Then $\Sc{H}_1\cap\Sc{V}=\F{A}[\Sc{V}_1]$.
  \end{proposition}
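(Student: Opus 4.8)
The plan is to prove the two inclusions separately. The inclusion $\Sc{H}_1\cap\Sc{V}\subseteq\F{A}[\Sc{V}_1]$ is formal and uses only that $(S,\rho)$ is a minimal isometric dilation, whereas $\F{A}[\Sc{V}_1]\subseteq\Sc{H}_1$ is where the fully coisometric hypothesis of this section (so that Lemma \ref{lemma41} is available) does the real work.

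For the first inclusion I would exploit the triangular form of the dilation. Since $\Sc{V}^{\perp}$ is invariant under each $S(\xi)$ we have $P_{\Sc{V}}S(\xi)P_{\Sc{V}^{\perp}}=0$, whence $P_{\Sc{V}}S(\xi)=A(\xi)P_{\Sc{V}}$ as operators on $\Sc{H}$. Iterating gives $P_{\Sc{V}}S(\xi_1)\cdots S(\xi_n)v=A(\xi_1)\cdots A(\xi_n)v$ for every $v\in\Sc{V}$. As $\Sc{V}_1$ is $\rho$-reducing and (by the argument of Lemma \ref{reducingspace}) $\Sc{H}_1$ is spanned by the vectors $S(\xi_1)\cdots S(\xi_n)v_1$ with $v_1\in\Sc{V}_1$, applying the (continuous) map $P_{\Sc{V}}$ yields $P_{\Sc{V}}\Sc{H}_1=\F{A}[\Sc{V}_1]$. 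In particular any $h\in\Sc{H}_1\cap\Sc{V}$ satisfies $h=P_{\Sc{V}}h\in\F{A}[\Sc{V}_1]$.

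For the reverse inclusion it suffices, since $\F{A}[\Sc{V}_1]\subseteq\Sc{V}$, to show $\F{A}[\Sc{V}_1]\subseteq\Sc{H}_1$; this is equivalent to the commutation of $P_{\Sc{V}}$ and $P_{\Sc{H}_1}$, for then $\F{A}[\Sc{V}_1]=P_{\Sc{V}}\Sc{H}_1=\Sc{H}_1\cap\Sc{V}$. Set $C=P_{\Sc{V}}P_{\Sc{H}_1}P_{\Sc{V}}$, a positive contraction on $\Sc{V}$ commuting with $\sigma$ and satisfying $C|_{\Sc{V}_1}=I$; commutation is exactly the assertion that $C$ is a projection. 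Here I would use the Remark following Lemma \ref{wanderingspace}: in the fully coisometric case $\tilde{S}(E\otimes\Sc{V})=\Sc{V}\oplus\Sc{W}$, so that $\tilde{S}\tilde{A}^*v=v$ for $v\in\Sc{V}$. Combining this with $P_{\Sc{H}_1}\tilde{S}=\tilde{S}(I\otimes P_{\Sc{H}_1})$ (valid because $P_{\Sc{H}_1}$ commutes with each $S(\xi)$ and lies in $\rho(\Sc{A})'$) and with $P_{\Sc{V}}\tilde{S}=\tilde{A}(I\otimes P_{\Sc{V}})$ produces the fixed-point identity
\begin{equation*}
 C=\tilde{A}(I\otimes C)\tilde{A}^{*},\qquad\tilde{A}\tilde{A}^{*}=I_{\Sc{V}}.
\end{equation*}

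The final step is to deduce from this identity, together with finite correlation, that $C$ is a projection. First, Lemma \ref{lemma41} yields a generation principle: every non-zero reducing subspace $\Sc{N}$ satisfies $\Sc{N}=\F{S}[\Sc{N}\cap\Sc{V}]$, since $\Sc{N}\ominus\F{S}[\Sc{N}\cap\Sc{V}]$ is reducing, meets $\Sc{V}$ trivially, and so vanishes; in particular $\Sc{H}_1^{\perp}=\F{S}[\Sc{V}\cap\Sc{H}_1^{\perp}]$. Next, a Cauchy--Schwarz argument applied to the displayed identity (using $\|\tilde{A}^*v\|^2=\langle\tilde{A}\tilde{A}^*v,v\rangle=\|v\|^2$) shows that the top eigenspace of $C$, and of $I-C$, is $\F{A}^*$-invariant and is carried by $\tilde{A}^*$ into its own tensor power; placing the associated reducing subspaces orthogonal to $\Sc{H}_1$ via Lemma \ref{reducingspace} forces these extreme eigenvalues into $\{0,1\}$, with eigenspaces $\Sc{V}\cap\Sc{H}_1$ and $\Sc{V}\cap\Sc{H}_1^{\perp}$. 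The hard part—the main obstacle—is ruling out intermediate spectrum of $C$ in $(0,1)$: the fixed-point identity by itself does \emph{not} force a positive fixed point of the unital completely positive map $X\mapsto\tilde{A}(I\otimes X)\tilde{A}^{*}$ to be a projection, and it is precisely here that full coisometry together with finite correlation (through Lemma \ref{lemma41} and the generation principle) must be invoked. Indeed, one checks that the statement fails for merely contractive, non-coisometric $(A,\sigma)$, so no purely formal argument can succeed. Once $C$ is known to be a projection, commutation holds and $\Sc{H}_1\cap\Sc{V}=P_{\Sc{V}}\Sc{H}_1=\F{A}[\Sc{V}_1]$, completing the proof.
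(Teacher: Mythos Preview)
Your diagnosis is correct, and in one respect sharper than the paper's: the inclusion $\F{A}[\Sc{V}_1]\subseteq\Sc{H}_1$ is indeed the substantive one, and it genuinely requires the fully coisometric hypothesis (ambient in this subsection but omitted from the statement). Your first-inclusion argument via $P_{\Sc{V}}\Sc{H}_1=\F{A}[\Sc{V}_1]$ is clean and correct.

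However, your argument for the reverse inclusion is explicitly left unfinished. You reduce to showing that $C=P_{\Sc{V}}P_{\Sc{H}_1}P_{\Sc{V}}$ is a projection and verify $\Phi_A(C)=C$, but then concede you cannot rule out spectrum of $C$ in $(0,1)$. The gestures toward Lemma~\ref{lemma41} and the ``generation principle'' are not assembled into an actual deduction, so the proposal as written does not establish the inclusion. The $\Phi_A$ route is also heavier than needed: Lemmas~\ref{commutantfixedpoint}--\ref{commutantfixedpoint2} identify the fixed-point set with $\F{A}'$ only on the \emph{minimal} cyclic space $\hat{\Sc{V}}$, not on an arbitrary $\Sc{V}$, so you cannot simply invoke them here.

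For comparison, the paper's proof is two short paragraphs using none of this machinery. For $\Sc{H}_1\cap\Sc{V}\subseteq\F{A}[\Sc{V}_1]$ it notes that $\Sc{V}\ominus\F{A}[\Sc{V}_1]$ is $\F{A}^*$-invariant and orthogonal to $\Sc{V}_1$, hence by Lemma~\ref{reducingspace} lies in $\Sc{H}_1^\perp$; this yields the inclusion directly. For the reverse it observes that any $w\in\Sc{H}_1^\perp\cap\Sc{V}$ satisfies $\F{A}^*w\subseteq\Sc{H}_1^\perp$ (since $\Sc{H}_1$ is reducing), whence $w\perp\F{A}[\Sc{V}_1]$, and then asserts $\F{A}[\Sc{V}_1]\subseteq\Sc{H}_1\cap\Sc{V}$. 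This last step is precisely the gap you flag: from $\Sc{H}_1^\perp\cap\Sc{V}\subseteq\Sc{V}\ominus\F{A}[\Sc{V}_1]$ one cannot pass to $\F{A}[\Sc{V}_1]\subseteq\Sc{H}_1$ without already knowing that $P_{\Sc{V}}$ and $P_{\Sc{H}_1}$ commute. Your remark that the statement fails without full coisometry is borne out concretely by $\Sc{A}=E=\B{C}$, $\Sc{V}=\B{C}^2$, $A=\tfrac{1}{\sqrt 2}\bigl[\begin{smallmatrix}0&0\\1&0\end{smallmatrix}\bigr]$, $\Sc{V}_1=\B{C}e_1$: here $\F{A}[\Sc{V}_1]=\Sc{V}$ but $\Sc{H}_1=\overline{\spa}\{S^ne_1:n\ge0\}$ and one computes $P_{\Sc{H}_1}e_2=\tfrac{1}{\sqrt 2}Se_1\neq e_2$, so $\Sc{H}_1\cap\Sc{V}=\Sc{V}_1\subsetneq\F{A}[\Sc{V}_1]$. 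The paper's two-line argument, which nowhere invokes full coisometry, cannot separate this case from the coisometric one. So while your approach is far more elaborate than the paper's, the paper's own proof of the reverse inclusion shares the very lacuna you identify; neither argument, as written, closes it.
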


  \begin{proof}
   If $w\in\Sc{V}\ominus\F{A}[\Sc{V}_1]$ then $\F{A}^*w$ is an $\F{A}^*$-invariant space orthogonal to $\Sc{V}_1$, hence by Lemma \ref{reducingspace} $\F{S}[\F{A}^*w]\subseteq\Sc{H}_1^\perp$. Therefore  $\Sc{H}_1\cap\Sc{V}\subseteq\F{A}[\Sc{V}_1]$.

   If $w\in\Sc{H}_1^\perp\cap\Sc{V}$ then for any $A\in\F{A}$ and $v\in\Sc{V}_1$ then we have that $0=\<A^*w,v\>=\<w,Av\>$. Hence $\F{A}[\Sc{V}_1]\subseteq\Sc{H}_1\cap\Sc{V}$.
  \end{proof}

  \begin{corollary}
    If $\Sc{M}$ is a $\F{S}$-reducing subspace then $\Sc{M}=\F{S}[\Sc{M}\cap\Sc{V}]$.
  \end{corollary}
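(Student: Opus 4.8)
The statement asserts that if $\Sc{M}$ is an $\F{S}$-reducing subspace, then it is recovered as $\F{S}[\Sc{M}\cap\Sc{V}]$. The plan is to prove the two inclusions $\F{S}[\Sc{M}\cap\Sc{V}]\subseteq\Sc{M}$ and $\Sc{M}\subseteq\F{S}[\Sc{M}\cap\Sc{V}]$ separately. The first inclusion is immediate: since $\Sc{M}$ is $\F{S}$-reducing, it is in particular $\F{S}$-invariant, so $\F{S}[\Sc{M}\cap\Sc{V}]\subseteq\F{S}[\Sc{M}]=\Sc{M}$. The real content is the reverse inclusion, and this is where I expect to spend the effort.

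For the reverse inclusion, first I would observe that $\Sc{M}\cap\Sc{V}$ is an $\F{A}^*$-invariant subspace of $\Sc{V}$, where $\F{A}$ is the unital algebra generated by $(A,\sigma)=(P_\Sc{V}S(\cdot)|_\Sc{V},\rho(\cdot)|_\Sc{V})$. Indeed, for $v\in\Sc{M}\cap\Sc{V}$ and $\xi\in E$, we have $A(\xi)^*v=S(\xi)^*v$ by part (iii) of the dilation definition (together with invariance of $\Sc{V}^\perp$ under $S(\xi)$), and $S(\xi)^*v\in\Sc{M}$ since $\Sc{M}$ is $\F{S}^*$-invariant, and it lies in $\Sc{V}$ since $\Sc{V}$ is $\F{A}^*$-invariant; similarly $\sigma(a)^*v\in\Sc{M}\cap\Sc{V}$. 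Having established this, I would set $\Sc{M}_0=\F{S}[\Sc{M}\cap\Sc{V}]$, which is an $\F{S}$-reducing subspace by Lemma \ref{reducingspace}, and is contained in $\Sc{M}$. The goal becomes showing $\Sc{M}\ominus\Sc{M}_0=\{0\}$.

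The key point is that $\Sc{M}\ominus\Sc{M}_0$ is itself an $\F{S}^*$-invariant subspace of $\Sc{H}$: it is the difference of two $\F{S}$-reducing (hence $\F{S}^*$-invariant) spaces with $\Sc{M}_0\subseteq\Sc{M}$, so $\Sc{N}:=\Sc{M}\ominus\Sc{M}_0$ is $\F{S}^*$-invariant. Now I invoke Lemma \ref{lemma41}: if $\Sc{N}$ were non-zero, then $\Sc{N}\cap\Sc{V}$ would be non-trivial, since the hypotheses of the corollary place us in the finitely correlated, fully coisometric setting. But any vector $w\in\Sc{N}\cap\Sc{V}$ lies in $\Sc{M}\cap\Sc{V}\subseteq\Sc{M}_0$, while also being orthogonal to $\Sc{M}_0$; hence $w=0$. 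This forces $\Sc{N}\cap\Sc{V}=\{0\}$, contradicting Lemma \ref{lemma41} unless $\Sc{N}=\{0\}$. Therefore $\Sc{M}=\Sc{M}_0=\F{S}[\Sc{M}\cap\Sc{V}]$.

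The main obstacle I anticipate is simply verifying cleanly that $\Sc{M}\cap\Sc{V}$ is $\F{A}^*$-invariant and that $A(\xi)^*$ and $S(\xi)^*$ agree on $\Sc{V}$, since this is what makes Lemma \ref{reducingspace} applicable to $\Sc{M}_0$ and lets the Lemma \ref{lemma41} argument close. The deeper issue — that $\Sc{N}$ being both $\F{S}^*$-invariant and having trivial intersection with $\Sc{V}$ forces $\Sc{N}=\{0\}$ — is precisely the content of Lemma \ref{lemma41}, so the corollary is essentially a direct application of the finitely correlated hypothesis to the residual space.
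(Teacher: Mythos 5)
Your proposal is correct and follows essentially the same route as the paper: both form $\Sc{M}\ominus\F{S}[\Sc{M}\cap\Sc{V}]$, note it is $\F{S}$-reducing (hence $\F{S}^*$-invariant) via Lemma \ref{reducingspace}, and apply Lemma \ref{lemma41} to force it to be zero because any vector in its intersection with $\Sc{V}$ would lie in $\Sc{M}\cap\Sc{V}\subseteq\F{S}[\Sc{M}\cap\Sc{V}]$ while being orthogonal to that space. Your explicit check that $\Sc{M}\cap\Sc{V}$ is $\F{A}^*$-invariant is a detail the paper leaves implicit, but the argument is the same.
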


  \begin{proof}
    $\Sc{M}$ is a $\F{S}$-reducing subspace and, by Lemma \ref{reducingspace}, $\F{S}[\Sc{M}\cap\Sc{V}]$ is a $\F{S}$-reducing subspace. Hence $\Sc{M}\ominus\F{S}[\Sc{M}\cap\Sc{V}]$ is $\F{S}$-reducing. If $\Sc{M}\ominus\F{S}[\Sc{M}\cap\Sc{V}]$ is non-zero then by Lemma \ref{lemma41}, $\Sc{M}\ominus\F{S}[\Sc{M}\cap\Sc{V}]$ has non-zero intersection with $\Sc{V}$. This yields a contradiction as the intersection will be orthogonal to $\Sc{M}\cap\Sc{V}$.
  \end{proof}

  \begin{corollary}\label{lemma41corollary2}
   If $\F{A}=\Sc{B}(\Sc{V})$ then every $\F{S}^*$-invariant subspace of $\Sc{H}$ contains $\Sc{V}$.
  \end{corollary}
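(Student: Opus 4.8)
The plan is to combine Lemma \ref{lemma41} with the irreducibility of $\Sc{B}(\Sc{V})$ acting on $\Sc{V}$. Throughout, let $\Sc{M}$ be a non-zero $\F{S}^*$-invariant subspace of $\Sc{H}$ (the statement is of course to be read for non-zero subspaces, since $\{0\}$ is $\F{S}^*$-invariant). Because $(S,\rho)$ is finitely correlated and fully coisometric, Lemma \ref{lemma41} applies and gives immediately that $\Sc{M}\cap\Sc{V}\neq\{0\}$. The strategy is then to upgrade this non-triviality to the equality $\Sc{M}\cap\Sc{V}=\Sc{V}$, from which $\Sc{V}\subseteq\Sc{M}$ follows at once.

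The first step is to show that $\Sc{M}\cap\Sc{V}$ is an $\F{A}^*$-invariant subspace of $\Sc{V}$. This rests on the same observation used in the proof of Lemma \ref{reducingspace}: since $\Sc{V}$ reduces $\rho$ and is $\F{S}^*$-invariant, the adjoints of the generators of $\F{S}$ restrict on $\Sc{V}$ to the adjoints of the generators of $\F{A}$, namely $S(\xi)^*|_{\Sc{V}}=A(\xi)^*$ for $\xi\in E$ and $\rho(a)^*|_{\Sc{V}}=\sigma(a)^*=\sigma(a^*)$ for $a\in\Sc{A}$. Consequently, for $v\in\Sc{M}\cap\Sc{V}$ and any generator $T^*$ of $\F{A}^*$, the vector $T^*v$ coincides with the corresponding image under $\F{S}^*$, so it lies in $\Sc{M}$ (by $\F{S}^*$-invariance of $\Sc{M}$) and in $\Sc{V}$ (by $\F{S}^*$-invariance of $\Sc{V}$); hence $T^*v\in\Sc{M}\cap\Sc{V}$. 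As $\F{A}^*$ is generated as an algebra by these images, $\Sc{M}\cap\Sc{V}$ is $\F{A}^*$-invariant.

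The final step invokes the hypothesis $\F{A}=\Sc{B}(\Sc{V})$. Since $\Sc{B}(\Sc{V})$ is self-adjoint and acts irreducibly on $\Sc{V}$, its only invariant subspaces are $\{0\}$ and $\Sc{V}$; the same therefore holds for $\F{A}^*=\Sc{B}(\Sc{V})$. Because $\Sc{M}\cap\Sc{V}$ is a non-zero $\F{A}^*$-invariant subspace, it must equal $\Sc{V}$, and so $\Sc{V}\subseteq\Sc{M}$, as required.

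I do not expect a serious obstacle here. The only substantive input, that a non-zero $\F{S}^*$-invariant subspace meets $\Sc{V}$ non-trivially, is exactly Lemma \ref{lemma41}, and the remaining work reduces to the bookkeeping identity $S(\xi)^*|_{\Sc{V}}=A(\xi)^*$ together with the elementary fact that $\Sc{B}(\Sc{V})$ has no non-trivial invariant subspaces. The one point worth stating carefully is that the corollary is understood to apply to non-zero invariant subspaces.
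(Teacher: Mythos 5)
Your proof is correct and follows essentially the same route as the paper: apply Lemma \ref{lemma41} to get $\Sc{M}\cap\Sc{V}\neq\{0\}$, observe that this intersection is $\F{A}^*$-invariant because the generators of $\F{S}^*$ restrict on $\Sc{V}$ to those of $\F{A}^*$, and conclude from $\F{A}=\Sc{B}(\Sc{V})$ that the intersection is all of $\Sc{V}$. Your version is, if anything, slightly more careful than the paper's (which phrases the hypothesis in terms of a reducing subspace), since you work directly with $\F{S}^*$-invariant subspaces as the statement requires.
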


  \begin{proof}
   Suppose $\Sc{M}$ is a non-zero $\F{S}$-reducing subspace. Then $\Sc{M}\cap\Sc{V}$ is a non-zero $\F{S}^*$-invariant, and hence $\F{A}^*$-invariant, subspace of $\Sc{V}$. Hence $\Sc{M}\cap\Sc{V}=\Sc{V}$.
  \end{proof}

  \begin{corollary}\label{uniqueminimal}
   If $\Sc{V}_1$ and $\Sc{V}_2$ are minimal $\F{A}^*$-invariant subspaces of $\Sc{V}$ such that $\F{S}[\Sc{V}_1]=\F{S}[\Sc{V}_2]$, then $\Sc{V}_1=\Sc{V}_2$.
  \end{corollary}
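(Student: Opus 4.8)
The plan is to reduce the statement to the already-proved Corollary \ref{lemma41corollary2} by passing to the common reducing subspace $\Sc{M}=\F{S}[\Sc{V}_1]=\F{S}[\Sc{V}_2]$ and showing that the compression of the representation to each $\Sc{V}_i$ generates \emph{all} of $\Sc{B}(\Sc{V}_i)$. By symmetry it suffices to prove $\Sc{V}_1\subseteq\Sc{V}_2$.

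First I would record that $\Sc{M}$ is $\F{S}$-reducing by Lemma \ref{reducingspace}, and that the restricted representation $(S,\rho)|_{\Sc{M}}$ is again isometric and fully coisometric, since $\Sc{M}$ reduces $(S,\rho)$ (which is fully coisometric by Lemma \ref{coiso}). Each $\Sc{V}_i$ is a finite-dimensional subspace of $\Sc{M}$ that is coinvariant for $(S,\rho)|_\Sc{M}$, being $\F{S}^*$-invariant; and because $\Sc{M}=\F{S}[\Sc{V}_i]=\bigvee_{n\ge0}\tilde{S}_n(E^n\otimes\Sc{V}_i)$, the representation $(S,\rho)|_\Sc{M}$ is the minimal isometric dilation (unique by Theorem \ref{cstar dilation}) of its compression $(A_i,\sigma_i):=(P_{\Sc{V}_i}S(\cdot)|_{\Sc{V}_i},\rho(\cdot)|_{\Sc{V}_i})$ to $\Sc{V}_i$. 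In particular $(S,\rho)|_\Sc{M}$ is itself a finitely correlated, fully coisometric representation whose distinguished finite-dimensional coinvariant subspace is $\Sc{V}_i$, so the results of this subsection apply to it verbatim, now with $\Sc{M}$ in the role of the ambient space.

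The key observation is that minimality of $\Sc{V}_1$ forces the compression to be irreducible. A subspace $\Sc{U}\subseteq\Sc{V}_1$ is invariant for the algebra generated by $(A_1,\sigma_1)$ exactly when its orthogonal complement inside $\Sc{V}_1$ is invariant for the adjoint algebra, i.e. is $\F{A}^*$-invariant; since $\Sc{V}_1$ is a minimal $\F{A}^*$-invariant subspace, this can only occur for $\Sc{U}=0$ or $\Sc{U}=\Sc{V}_1$. Hence the unital algebra generated by $(A_1,\sigma_1)$ has no nontrivial invariant subspace of the finite-dimensional space $\Sc{V}_1$, and by Burnside's theorem it is all of $\Sc{B}(\Sc{V}_1)$. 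Now Corollary \ref{lemma41corollary2}, applied to the finitely correlated fully coisometric representation $(S,\rho)|_\Sc{M}$ with coinvariant subspace $\Sc{V}_1$, shows that every subspace of $\Sc{M}$ invariant under the adjoint of the \textsc{wot}-closed algebra generated by $(S,\rho)|_\Sc{M}$ contains $\Sc{V}_1$. The subspace $\Sc{V}_2$ is of this kind, since it is $\F{S}^*$-invariant and contained in the reducing subspace $\Sc{M}$, so $\Sc{V}_1\subseteq\Sc{V}_2$. Interchanging the roles of $\Sc{V}_1$ and $\Sc{V}_2$ yields $\Sc{V}_2\subseteq\Sc{V}_1$, and therefore $\Sc{V}_1=\Sc{V}_2$.

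I expect the main obstacle to be the bookkeeping in the second step rather than any deep difficulty: one must verify carefully that restricting to $\Sc{M}$ preserves every hypothesis, so that $(S,\rho)|_\Sc{M}$ genuinely is the minimal isometric dilation of the compression to $\Sc{V}_i$ and is still fully coisometric, making Corollary \ref{lemma41corollary2} legitimately available on $\Sc{M}$ and not merely on the original $\Sc{H}$. The only other substantive point is the Burnside step, which is precisely where the finite-dimensionality of $\Sc{V}_i$ (and hence the finitely correlated hypothesis) is indispensable.
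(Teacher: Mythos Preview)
Your argument is correct and follows essentially the same route as the paper: restrict to the common reducing subspace $\Sc{M}=\F{S}[\Sc{V}_1]=\F{S}[\Sc{V}_2]$, observe that $(S,\rho)|_{\Sc{M}}$ is the minimal isometric dilation of the compression to each $\Sc{V}_i$, and then apply Corollary~\ref{lemma41corollary2} to conclude that every $\F{S}^*$-invariant subspace of $\Sc{M}$ contains $\Sc{V}_i$. The only difference is that you spell out the Burnside step explicitly, whereas the paper leaves implicit the fact that minimality of $\Sc{V}_i$ as an $\F{A}^*$-invariant subspace forces the compressed algebra to equal $\Sc{B}(\Sc{V}_i)$, which is the hypothesis Corollary~\ref{lemma41corollary2} needs.
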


  \begin{proof}
   Let $\Sc{H}'=\F{S}[\Sc{V}_1]=\F{S}[\Sc{V}_2]$. Define representations $(B,\sigma_1)$ and $(C,\sigma_2)$ of $E$ on $\Sc{V}_1$ and $\Sc{V}_2$ respectively by
   \begin{equation*}
    B(\xi)=P_{\Sc{V}_1}A(\xi)|_{\Sc{V}_1}\text{ and }C(\xi)=P_{\Sc{V}_2}A(\xi)|_{\Sc{V}_2}
   \end{equation*}
   for all $\xi\in E$, and
   \begin{equation*}
    \sigma_i(a)=\sigma(a)|_{\Sc{V}_i}
   \end{equation*}
   for all $a\in\Sc{A}$, $i=1,2$. The representations $(B,\sigma_1)$ and $(C,\sigma_2)$ share a unique minimal isometric dilation $(S(\cdot)|_{\Sc{H}'},\sigma(\cdot)|_{\Sc{H}'})$. By Corollary \ref{lemma41corollary2}, any $\F{S}^*$-invariant subspace of $\Sc{H}'$ contains both $\Sc{V}_1$ and $\Sc{V}_2$. In particular $\Sc{V}_1\subseteq\Sc{V}_2$ and $\Sc{V}_2\subseteq\Sc{V}_1$. Hence $\Sc{V}_1=\Sc{V}_2$.
  \end{proof}

  \begin{definition}\label{Phi map}
	Let $E$ be a $C^*$-correspondence over a $C^*$-algebra $\Sc{A}$. When $(A,\sigma)$ is a representation of $E$ on $\Sc{H}$,
  we denote by $\Phi_A$ the completely positive map from $\sigma(\Sc{A})'$ to $\sigma(\Sc{A})'$ defined by
  \begin{equation*}
   \Phi_A(X)=\tilde{A}(I\otimes X)\tilde{A}^*
  \end{equation*}
  for every $X$ in $\sigma_A(\Sc{A})'$.
 \end{definition}

 \begin{remark}
  For any $a\in\Sc{A}$ and $X\in\sigma(\Sc{A})'$ we have
  \begin{align*}
   \sigma(a)\Phi_A(X)&=\sigma(a)\tilde{A}(I\otimes X)\tilde{A}^*=\tilde{A}\sigma^E(a)(I\otimes X)\tilde{A}^*\\
   &=\tilde{A}(I\otimes X)\sigma^E(a)\tilde{A}^*=\tilde{A}(I\otimes X)\tilde{A}^*\sigma(a).
  \end{align*}
  So $\Phi_A$ maps from $\sigma(\Sc{A})'$ to $\sigma(\Sc{A})'$ as claimed.
  \end{remark}

  In \cite{MuhlySolel2}  isometric representations that are not necessarily fully coisometric are studied. It is shown there that the corresponding $\Phi_A$ function for an isometric representation $(A,\sigma)$ will be an endomorphism of $\sigma(\Sc{A})'$. It is also shown that the fixed point set of $\Phi_A$ is the commutant of $\F{A}$, where $\F{A}$ is the algebra generated by the representation. In our setting, when $(A,\sigma)$ is a finite dimensional, fully coisometric representation on a Hilbert space $\Sc{V}$, we get that the commutant of $\F{A}$ is fixed by $\Phi_A$ (Lemma \ref{commutantfixedpoint}). Later, in Lemma \ref{commutantfixedpoint2}, when we compress $(A,\sigma)$ to a certain $\F{A}^*$-invariant subspace $\hat{\Sc{V}}\subseteq\Sc{V}$, we will get that the fixed point set of the corresponding $\Phi_{\hat{A}}$ map for the compressed representation $(\hat{A},\hat{\sigma}):=(P_{\hat{\Sc{V}}}A(\cdot)|_{\hat{\Sc{V}}},\sigma(\cdot)|_{\hat{\Sc{V}}})$, is the commutant of the compression of $\F{A}$ to $\hat{\Sc{V}}$.

  The map $\Phi_A$ is a generalisation of the map $\Phi$ introduced in Section $4$ of \cite{DavKriShp}. Indeed Lemma \ref{davkrishpLemma5.10} and Lemma \ref{davkrishpLemma5.11} are direct analogues of \cite[Lemma 5.10]{DavKriShp} and \cite[Lemma 5.11]{DavKriShp} respectively. We follow the same line of proof as in \cite{DavKriShp} when proving these results.
 
 \begin{lemma}\label{commutantfixedpoint}
	Let $(A,\sigma)$ be a fully coisometric representation of a $C^*$-correspond\-ence $E$ over a $C^*$-algebra $\Sc{A}$ on a finite dimensional Hilbert space $\Sc{V}$. Let $\F{A}$ be the unital algebra generated by the representation $(A,\sigma)$ and let $\Phi_A$ be the map from $\sigma(\Sc{A})'$ to  $\sigma(\Sc{A})'$ defined in Definition \ref{Phi map}.

  Then if $X$ is in the commutant of $\F{A}$, $X$ is a fixed point of $\Phi_A$.
 \end{lemma}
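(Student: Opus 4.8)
The plan is to prove the single containment $\F{A}'\subseteq\{X:\Phi_A(X)=X\}$ by a direct computation that hinges on one intertwining identity together with the full coisometry hypothesis. I would first observe that the statement makes sense: if $X$ lies in the commutant of $\F{A}$, then since $\sigma(\Sc{A})\subseteq\F{A}$ we have $X\in\sigma(\Sc{A})'$, so that $I\otimes X$ is a well-defined bounded operator on $E\otimes_\sigma\Sc{V}$ and $\Phi_A(X)=\tilde{A}(I\otimes X)\tilde{A}^*$ is defined and lies in $\sigma(\Sc{A})'$ by the remark following Definition \ref{Phi map}. Note also that $X$ commutes not only with each $\sigma(a)$ but with each $A(\xi)$, $\xi\in E$, since these generate $\F{A}$.

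The key step, which I expect to be the technical heart (though it is short), is to establish the intertwining relation
\begin{equation*}
 \tilde{A}(I\otimes X)=X\tilde{A}.
\end{equation*}
I would verify this on elementary tensors and extend by linearity and density. For $\xi\in E$ and $h\in\Sc{V}$ we have, using $X\in\sigma(\Sc{A})'$ to evaluate $I\otimes X$ and then the commutation $A(\xi)X=XA(\xi)$,
\begin{equation*}
 \tilde{A}(I\otimes X)(\xi\otimes h)=\tilde{A}(\xi\otimes Xh)=A(\xi)Xh=XA(\xi)h=X\tilde{A}(\xi\otimes h).
\end{equation*}
Since vectors of the form $\xi\otimes h$ span a dense subspace of $E\otimes_\sigma\Sc{V}$ and both sides are bounded, the identity holds on all of $E\otimes_\sigma\Sc{V}$.

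With the intertwining relation in hand, the conclusion follows immediately from full coisometry. Taking adjoints and applying the relation gives
\begin{equation*}
 \Phi_A(X)=\tilde{A}(I\otimes X)\tilde{A}^*=X\tilde{A}\tilde{A}^*=X I_\Sc{V}=X,
\end{equation*}
where $\tilde{A}\tilde{A}^*=I_\Sc{V}$ is precisely the assumption that $(A,\sigma)$ is fully coisometric. This shows $X$ is a fixed point of $\Phi_A$, as required. I should remark that the finite-dimensionality of $\Sc{V}$ plays no role in this direction of the argument; the only hypotheses actually used are that $X$ commutes with each $A(\xi)$ and $\sigma(a)$ and that $\tilde{A}$ is a coisometry. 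The genuine difficulty in the general theory lies in the reverse inclusion (that every fixed point of $\Phi_A$ lies in $\F{A}'$), which is not claimed here.
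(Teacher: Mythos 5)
Your proof is correct and follows essentially the same route as the paper: establish the intertwining relation $X\tilde{A}=\tilde{A}(I\otimes X)$ on elementary tensors from the commutation $XA(\xi)=A(\xi)X$, then multiply on the right by $\tilde{A}^*$ and invoke $\tilde{A}\tilde{A}^*=I_{\Sc{V}}$. Your added observations (that $X\in\sigma(\Sc{A})'$ so $I\otimes X$ is well defined, and that finite-dimensionality is not needed for this direction) are accurate but not essentially different from the paper's argument.
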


 \begin{proof}
  Suppose that $X\in\F{A}'$. Then for any $\xi\in E$ and $v\in V$ we have
  \begin{align*}
   X\tilde{A}(\xi\otimes v)&=XA(\xi)v=A(\xi)Xv\\
  &=\tilde{A}(\xi\otimes Xv)=\tilde{A}(I\otimes X)(\xi\otimes v).   
  \end{align*}
  Hence $X\tilde{A}=\tilde{A}(I\otimes X)$. Multiplying on the right by $\tilde{A}^*$ gives $X=\Phi_A(X)$.
 \end{proof}

 \begin{lemma}\label{davkrishpLemma5.10}
	Let $(A,\sigma)$ be a fully coisometric representation of a $C^*$-correspond\-ence $E$ over a $C^*$-algebra $\Sc{A}$ on a finite dimensional Hilbert space $\Sc{V}$. Let $\F{A}$ be the unital algebra generated by the representation $(A,\sigma)$ and let $\Phi_A$ be the map from $\sigma(\Sc{A})'$ to  $\sigma(\Sc{A})'$ defined in Definition \ref{Phi map}.

  Suppose there is an $X\in\sigma(\Sc{A})'$ which is non-scalar and $\Phi_A(X)=X$. Then $\Sc{V}$ has two pairwise orthogonal minimal $\F{A}^*$-invariant subspaces.
 \end{lemma}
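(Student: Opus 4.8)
The plan is to reduce to a self-adjoint non-scalar fixed point and then read off the two subspaces from its extreme spectral eigenspaces. First note that, since $(A,\sigma)$ is fully coisometric, $\tilde{A}\tilde{A}^*=I_\Sc{V}$, so $\Phi_A$ is unital; being completely positive it is also $*$-preserving, whence $\Phi_A(X^*)=X^*$ whenever $\Phi_A(X)=X$. Writing $X$ in terms of its self-adjoint parts $\operatorname{Re}X=\tfrac12(X+X^*)$ and $\operatorname{Im}X=\tfrac{1}{2i}(X-X^*)$, both of these are fixed by $\Phi_A$, and they cannot both be scalar since $X$ is not. Thus I may assume from the start that $X=X^*\in\sigma(\Sc{A})'$ is a non-scalar fixed point. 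Being self-adjoint and non-scalar, $X$ has a largest eigenvalue $\lambda$ and a strictly smaller smallest eigenvalue $\mu$; let $\Sc{V}_\lambda$ and $\Sc{V}_\mu$ be the corresponding nonzero, mutually orthogonal spectral eigenspaces. I claim each is $\F{A}^*$-invariant. Granting this, I take a minimal $\F{A}^*$-invariant subspace contained in each of $\Sc{V}_\lambda$ and $\Sc{V}_\mu$ (these exist by finite-dimensionality); the two are orthogonal because $\Sc{V}_\lambda\perp\Sc{V}_\mu$, which proves the lemma.

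The heart of the argument is showing that $\Sc{V}_\lambda$ is $\F{A}^*$-invariant. The spectral projection $P_\lambda$ of $X$ lies in $W^*(X)\subseteq\sigma(\Sc{A})'$, so $\Sc{V}_\lambda$ reduces $\sigma(\Sc{A})$ and in particular is $\sigma(\Sc{A})^*$-invariant; it remains to control the operators $A(\xi)^*$. Fix a unit vector $v\in\Sc{V}_\lambda$ and set $w=\tilde{A}^*v\in E\otimes\Sc{V}$. Because $\tilde{A}\tilde{A}^*=I_\Sc{V}$ we have $\|w\|=\|v\|=1$, while
\begin{equation*}
 \<(I\otimes X)w,w\>=\<\tilde{A}(I\otimes X)\tilde{A}^*v,v\>=\<\Phi_A(X)v,v\>=\<Xv,v\>=\lambda.
\end{equation*}
Since $X\leq\lambda I_\Sc{V}$ and $Y\mapsto I\otimes Y$ is a positive map, $I\otimes X\leq\lambda I_{E\otimes\Sc{V}}$, so the equality $\<(I\otimes X)w,w\>=\lambda\|w\|^2$ forces $w$ into the top eigenspace $\ker(\lambda I-I\otimes X)$. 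As $P_\lambda\in\sigma(\Sc{A})'$, the space decomposes as $E\otimes\Sc{V}=(E\otimes\Sc{V}_\lambda)\oplus(E\otimes\Sc{V}_\lambda^\perp)$, and writing $I\otimes X=\sum_j\lambda_j(I\otimes P_j)$ over the spectral projections of $X$ shows that this top eigenspace is exactly $E\otimes\Sc{V}_\lambda$. Hence $\tilde{A}^*v\in E\otimes\Sc{V}_\lambda$. Consequently, for every $\xi\in E$ and every $u\in\Sc{V}_\lambda^\perp$,
\begin{equation*}
 \<A(\xi)^*v,u\>=\<v,A(\xi)u\>=\<\tilde{A}^*v,\xi\otimes u\>=0,
\end{equation*}
since $\tilde{A}^*v\in E\otimes\Sc{V}_\lambda$ is orthogonal to $\xi\otimes u\in E\otimes\Sc{V}_\lambda^\perp$. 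Therefore $A(\xi)^*v\in\Sc{V}_\lambda$, so $\Sc{V}_\lambda$ is $\F{A}^*$-invariant.

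Applying the same argument to the fixed point $-X$, whose largest eigenvalue is $-\mu$, shows that $\Sc{V}_\mu$ is $\F{A}^*$-invariant as well, which completes the construction. The step I expect to be the crux is the rigidity observation that $\tilde{A}^*v$ lands in the top eigenspace of $I\otimes X$: this is precisely where the fully coisometric hypothesis enters, through the norm identity $\|\tilde{A}^*v\|=\|v\|$, which upgrades a Cauchy–Schwarz-type estimate to an equality and pins down $\tilde{A}^*v$. Everything else is bookkeeping with the orthogonal splitting $E\otimes\Sc{V}=(E\otimes\Sc{V}_\lambda)\oplus(E\otimes\Sc{V}_\lambda^\perp)$ supplied by the spectral projections of $X$, exactly as in the row-contraction case of \cite{DavKriShp} but phrased through Muhly and Solel's $\tilde{}$ operators.
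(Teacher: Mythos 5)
Your proof is correct and follows essentially the same route as the paper's: reduce to a self-adjoint (there, positive) non-scalar fixed point, use the equality case of $\langle(I\otimes X)\tilde{A}^*v,\tilde{A}^*v\rangle\leq\|\tilde{A}^*v\|^2$ on the extremal eigenspaces to show $\tilde{A}^*$ maps each into $E\otimes(\text{eigenspace})$, and deduce $\F{A}^*$-invariance from the orthogonality of $E\otimes\Sc{V}_\lambda$ and $E\otimes\Sc{V}_\lambda^\perp$. The only differences are cosmetic (the paper normalizes $X\geq0$ with $\|X\|=1$ and treats both extremal eigenvalues symmetrically rather than passing to $-X$).
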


 \begin{proof}
  Since $\Phi_A$ is unital and self-adjoint there is a positive, non-scalar $X\in\sigma(\Sc{A})'$ such that $\Phi_A(X)=X$. Assume $\|X\|=1$. Note that, as $X\in\sigma(\Sc{A})'$, the eigenspaces of $X$ are invariant under $\sigma(\Sc{A})$. Let $\mu$ be the smallest eigenvalue of $X$ and let $\Sc{M}=\ker(X-I)$ and $\Sc{N}=\ker(X-\mu I)$. Take any non-zero $x\in\Sc{M}$.
  \begin{align*}
   \|x\|^2&=\<\Phi_A(X)x,x\>=\<(I\otimes X)\tilde{A}^*x,\tilde{A}^*x\>\\
   &\leq\<\tilde{A}^*x,\tilde{A}^*x\>=\|x\|^2.
  \end{align*}
  From this we must have $(I\otimes X)\tilde{A}^*x=\tilde{A}^*x$ and hence $\tilde{A}^*x\in E\otimes\Sc{M}$.

  Note that if $x,y$ are eigenvectors for $X$ for different eigenvalues then
  \begin{equation*}
   \<\xi\otimes x,\eta\otimes y\>=\<x,\sigma(\<\xi,\eta\>)y\>=0,
  \end{equation*}
  for any $\xi,\eta\in E$. Hence if we take any non-zero $x\in\Sc{M}$ and let $y$ be any eigenvector of $X$ orthogonal to $\Sc{M}$ we get
  \begin{equation*}
   \<A(\xi)^*x,y\>=\<\tilde{A}^*x,\xi\otimes y\>=0
  \end{equation*}
  for any $\xi\in E$. Hence $\Sc{M}$ is $\F{A}^*$-invariant. The same argument works for $\Sc{N}$, as both $\Sc{M}$ and $\Sc{N}$ are eigenspaces for extremal values in the spectrum of $X$. As $\Sc{M}$ and $\Sc{N}$ are distinct eigenspaces for a self-adjoint operator, they are orthogonal. Since $\Sc{V}$ is a finite dimensional, there exists a space $\{0\}\neq\Sc{M}'\subseteq\Sc{M}$ of minimal dimension which is $\F{A}^*$-invariant and a space $\{0\}\neq\Sc{N}'\subseteq\Sc{N}$ of minimal dimension which is $\F{A}^*$-invariant.
 \end{proof}

 \begin{lemma}\label{davkrishpLemma5.11}
	Let $(A,\sigma)$ be a fully coisometric representation of a $C^*$-correspond\-ence $E$ over a $C^*$-algebra $\Sc{A}$ on a finite dimensional Hilbert space $\Sc{V}$. Let $\F{A}$ be the unital algebra generated by the representation $(A,\sigma)$ and let $\Phi_A$ be the map from $\sigma(\Sc{A})'$ to  $\sigma(\Sc{A})'$ defined in Definition \ref{Phi map}.

  Suppose $\Sc{V}=\Sc{V}_1\oplus\Sc{V}_2$ where both $\Sc{V}_1$ and $\Sc{V}_2$ are minimal $\F{A}^*$-invariant subspaces. Further suppose the representation $(A,\sigma)$ decomposes into $(B,\sigma_1)\oplus (C,\sigma_2)$ with respect to $\Sc{V}_1\oplus\Sc{V}_2$ with $\Sc{B}(\Sc{V}_1)=\Alg\{B(\xi),\sigma_1(a):\xi\in E, a\in\Sc{A}\}$ and $\Sc{B}(\Sc{V}_2)=\Alg\{C(\xi),\sigma_2(a):\xi\in E, a\in\Sc{A}\}$.

  If there exists $X\in\sigma(\Sc{A})'$ such that
  \begin{enumerate}
   \item $\Phi_A(X)=X$ and
   \item $X_{21}:=P_{\Sc{V}_2}XP_{\Sc{V}_1}\neq0$
  \end{enumerate}
  then there is a unitary $W$ such that
  \begin{equation*}
   C(\xi)=W^*B(\xi)W
  \end{equation*}
  and
  \begin{equation*}
   \sigma_2(a)=W^*\sigma_1(a)W
  \end{equation*}
  for all $\xi\in E$ and $a\in\Sc{A}$. Moreover the fixed point set of $\Phi_A$ consists of all matrices of the form $\bigr[\begin{smallmatrix}a_{11}I_{\Sc{V}_1}&a_{12}W^*\\a_{21}W& a_{22}I_{\Sc{V}_2}\end{smallmatrix}\bigl]$.
 \end{lemma}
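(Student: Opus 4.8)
The plan is to work blockwise with respect to $\Sc{V}=\Sc{V}_1\oplus\Sc{V}_2$. Since $\Sc{V}_1$ and $\Sc{V}_2$ are complementary $\F{A}^*$-invariant subspaces they reduce $\sigma$, so $P_{\Sc{V}_1},P_{\Sc{V}_2}\in\sigma(\Sc{A})'$ and, as noted in the preliminaries, $E\otimes\Sc{V}=(E\otimes\Sc{V}_1)\oplus(E\otimes\Sc{V}_2)$; with respect to this splitting $\tilde{A}=\tilde{B}\oplus\tilde{C}$, and $\tilde{A}\tilde{A}^*=I$ forces $\tilde{B}\tilde{B}^*=I_{\Sc{V}_1}$ and $\tilde{C}\tilde{C}^*=I_{\Sc{V}_2}$, so both $(B,\sigma_1)$ and $(C,\sigma_2)$ are fully coisometric. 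Writing $X=[X_{ij}]$ and setting $\Phi_B(Y)=\tilde{B}(I\otimes Y)\tilde{B}^*$ and $\Phi_C(Z)=\tilde{C}(I\otimes Z)\tilde{C}^*$, the equation $\Phi_A(X)=X$ decouples into $\Phi_B(X_{11})=X_{11}$, $\Phi_C(X_{22})=X_{22}$, $\tilde{C}(I\otimes X_{21})\tilde{B}^*=X_{21}$ and $\tilde{B}(I\otimes X_{12})\tilde{C}^*=X_{12}$. Because the algebra generated by $(B,\sigma_1)$ is all of $\Sc{B}(\Sc{V}_1)$, the space $\Sc{V}_1$ contains no two orthogonal minimal invariant subspaces for this algebra, so Lemma \ref{davkrishpLemma5.10} applied to $(B,\sigma_1)$ forces every $\Phi_B$-fixed element of $\sigma_1(\Sc{A})'$ to be scalar; hence $X_{11}=a_{11}I_{\Sc{V}_1}$, and symmetrically $X_{22}=a_{22}I_{\Sc{V}_2}$.

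The crux is the off-diagonal equation $X_{21}=\tilde{C}(I\otimes X_{21})\tilde{B}^*$ with $X_{21}\neq0$, and the main obstacle is to upgrade this contractive identity into the rigidity statement that $X_{21}$ is a scalar multiple of a unitary. After normalising $\|X_{21}\|=1$, let $\Sc{V}_1^{(1)}=\ker(X_{21}^*X_{21}-I)$ be the top singular subspace. For $v\in\Sc{V}_1^{(1)}$ the chain $\|v\|=\|X_{21}v\|=\|\tilde{C}(I\otimes X_{21})\tilde{B}^*v\|\leq\|(I\otimes X_{21})\tilde{B}^*v\|\leq\|\tilde{B}^*v\|=\|v\|$, which uses that $\tilde{B}^*$ is isometric while $\tilde{C}$ is contractive, must be an equality throughout; the equality $\|(I\otimes X_{21})\tilde{B}^*v\|=\|\tilde{B}^*v\|$ together with $X_{21}^*X_{21}\le I$ localises $\tilde{B}^*v$ inside $E\otimes\Sc{V}_1^{(1)}$. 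Writing $B(\xi)^*v$ through $\tilde{B}^*v$ (from $B(\xi)h=\tilde{B}(\xi\otimes h)$ one gets $B(\xi)^*v=\sum_k\sigma_1(\langle\xi,\eta_k\rangle)w_k$ when $\tilde{B}^*v=\sum_k\eta_k\otimes w_k$) and using that $\Sc{V}_1^{(1)}$ is $\sigma_1$-reducing shows $\Sc{V}_1^{(1)}$ is invariant under $B(\xi)^*$ and $\sigma_1(a)^*$. Irreducibility of $\Sc{B}(\Sc{V}_1)$ then gives $\Sc{V}_1^{(1)}=\Sc{V}_1$, i.e. $X_{21}^*X_{21}=I_{\Sc{V}_1}$; running the symmetric argument with the adjoint equation $X_{21}^*=\tilde{B}(I\otimes X_{21}^*)\tilde{C}^*$ and the irreducibility of $\Sc{B}(\Sc{V}_2)$ gives $X_{21}X_{21}^*=I_{\Sc{V}_2}$. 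Thus $W:=X_{21}$ is unitary, and $X_{21}\in\sigma(\Sc{A})'$ immediately yields $\sigma_2(a)=W\sigma_1(a)W^*$.

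To obtain the intertwining of $B$ and $C$ I would extract a clean operator identity from the same equality. With $W$ unitary the inequalities above are equalities for every $v$, forcing $(I\otimes W)\tilde{B}^*v\in\operatorname{ran}\tilde{C}^*$; applying $\tilde{C}^*$ to $\tilde{C}(I\otimes W)\tilde{B}^*=W$ then gives $(I\otimes W)\tilde{B}^*=\tilde{C}^*W$. Taking adjoints and cancelling the unitaries produces $W\tilde{B}=\tilde{C}(I\otimes W)$, and since $C(\xi)h=\tilde{C}(\xi\otimes h)$ and $(\xi\otimes\cdot)W=(I\otimes W)(\xi\otimes\cdot)$ this is exactly $C(\xi)W=WB(\xi)$, i.e. $C(\xi)=WB(\xi)W^*$ (equivalently $C(\xi)=(W')^*B(\xi)W'$ with $W'=W^*$). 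Finally, to pin down the whole fixed-point set I would feed the identity $(I\otimes W)\tilde{B}^*=\tilde{C}^*W$ back into the block equations: for any fixed $X_{21}'$ it gives $\Phi_B(W^*X_{21}')=W^*X_{21}'$, so $W^*X_{21}'\in\sigma_1(\Sc{A})'$ is scalar and $X_{21}'\in\B{C}W$, and symmetrically every fixed $X_{12}\in\B{C}W^*$. Combined with the scalar diagonal blocks this yields the asserted matrix form, and the reverse inclusion follows by checking directly that each such matrix is $\Phi_A$-fixed using $\tilde{B}(I\otimes W^*)\tilde{C}^*=W^*$ and $\tilde{B}\tilde{B}^*=\tilde{C}\tilde{C}^*=I$.
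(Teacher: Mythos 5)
Your proof is correct and follows essentially the same route as the paper: you identify the top singular subspace of $X_{21}$, show it is $\sigma_1$-reducing and coinvariant and hence all of $\Sc{V}_1$ by minimality, extract the identity $(I\otimes W)\tilde{B}^*=\tilde{C}^*W$, and characterise the fixed-point set via Lemma \ref{davkrishpLemma5.10}. You are in fact slightly more careful than the paper at two points it glosses over: you obtain surjectivity of $X_{21}$ from the adjoint block equation rather than simply asserting unitarity, and you spell out why the diagonal blocks are scalar.
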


 \begin{proof}
  We can assume that $X=X^*$ and $\|X_{21}\|=1$ as $\Phi_A$ is self-adjoint. We denote by $\tilde{B}$ and $\tilde{C}$ the usual maps from $E\otimes\Sc{V}_1$ and $E\otimes\Sc{V}_2$ respectively. Let $\Sc{M}=\{x\in\Sc{V}_1:\|X_{21}v\|=\|v\|\}$. As $\Sc{V}$ is finite dimensional, $\Sc{M}$ is non-empty. Note that for any $v\in\Sc{M}$ we have $X_{21}^*X_{21}v=v$. It follows that $\Sc{M}$ is a subspace of $\Sc{V}_1$. Thus if $v\in\Sc{M}$ and $a\in\Sc{A}$ we have
  \begin{equation*}
   \|X_{21}\sigma(a)v\|^2=\<X_{21}\sigma(a)v,X_{21}\sigma(a)v\>=\<X_{21}^*X_{21}v,\sigma(a^*a)v\>=\|\sigma(a)v\|^2.
  \end{equation*}
  So $\Sc{M}$ reduces $\sigma(\Sc{A})$. This tells us that $E\otimes\Sc{M}$ and $E\otimes(\Sc{V}_1\ominus\Sc{M})$ are orthogonal spaces.

  Now take any $v$ in $\Sc{M}$. We have that
  \begin{equation*}
   X_{21}v=\tilde{C}(I\otimes X_{21})\tilde{B}^*v.
  \end{equation*}
  This implies that $\|(I\otimes X_{21})\tilde{B}^*v\|=\|\tilde{B}^*v\|=\|v\|$. Thus $\tilde{B}^*v\in E\otimes\Sc{M}$ for all $v\in\Sc{M}$. Take any $\xi\in E$, $v\in\Sc{M}$ and $w\in\Sc{V}_1\ominus\Sc{M}$.
  \begin{equation*}
   \<B(\xi)^*v,w\>=\<\tilde{B}^*v,\xi\otimes w\>=0.
  \end{equation*}
  Thus $B(\xi)^*v\in\Sc{M}$. We conclude that $\Sc{M}$ is $\F{A}^*$-invariant. Hence, by the minimality of $\Sc{V}_1$, $\Sc{M}$ is all of $\Sc{V}_1$. Therefore $X_{21}$ is a unitary. Let $W=X_{21}$. For $v\in\Sc{V}_1$
  \begin{equation*}
   \|v\|=\|Wv\|=\|\tilde{C}(I\otimes W)\tilde{B}^*v\|\leq\|(I\otimes W)\tilde{B}^*v\|\leq\|v\|.
  \end{equation*}
  Hence $\tilde{C}$ is an isometry from Ran$(I\otimes W)\tilde{B}^*$ to the Ran$W=\Sc{V}_2$. $\tilde{C}$ is a contraction and so must be zero on the orthogonal complement of Ran$(I\otimes W)\tilde{B}^*$. It follows that $\tilde{C}^*$ is an isometry from $\Sc{V}_2$ to Ran$(I\otimes W)\tilde{B}^*$. Hence $\tilde{C}^*W=(I\otimes W)\tilde{B}^*$. From this it follows that $C(\xi)^*=WB(\xi)^*W^*$ for all $\xi\in E$. Since $W$ is also in the commutant of $\sigma(\Sc{A})$ it is the desired unitary.

  Suppose $Y\in\Sc{B}(\Sc{V}_1,\Sc{V}_2)$ and $\bigr[\begin{smallmatrix}0&0\\Y&0\end{smallmatrix}\bigl]$ is fixed by $\Phi_A$, then
  \begin{equation*}
   Y=\tilde{C}(I\otimes Y)\tilde{B}^*=W\tilde{B}(I\otimes W^*)(I\otimes Y)\tilde{B}^*=W\tilde{B}(I\otimes W^*Y)\tilde{B}^*.
  \end{equation*}
  It follows from Lemma \ref{davkrishpLemma5.10} that $W^*Y$ is a scalar and so $Y$ is a scalar multiple of $W$. A similar argument works for the other coordinates.
 \end{proof}  

  By Proposition \ref{reducingspacecor}, if $\Sc{V}'$ is an $\F{A}^*$-invariant subspace of $\Sc{V}$ such that $\F{A}[\Sc{V}']=\Sc{V}$ (i.e. $\Sc{V}'$ is cyclic for $\F{A}$) then $\F{S}[\Sc{V}']=\Sc{H}$. Hence the minimal isometric dilation of the completely contractive representation $(P_{\Sc{V}'}A(\cdot)|_{\Sc{V}'},\sigma(\cdot)|_{\Sc{V}'})$ is $(S,\rho)$.

  \begin{definition}
	Suppose $\F{A}$ is an algebra acting on a Hilbert space $\Sc{V}$, and that $\Sc{V}'$ is an $\F{A}^*$-invariant subspace of $\Sc{V}$ which is cyclic for $\F{A}$. If $\Sc{V}'$ has no proper $\F{A}^*$-invariant subspaces which are cyclic for $\F{A}$ then we say that $\Sc{V}'$ is a \emph{minimal cyclic coinvariant subspace (for $\F{A}$) of $\Sc{V}$.}

	When $(A,\sigma)$ is representation of a $C^*$-correspondence on a Hilbert space $\Sc{V}$ and $\F{A}$ is the unital \textsc{wot}-closed algebra generated by $(A,\sigma)$, we call a minimal cyclic coinvariant subspace for $\F{A}$ a \emph{minimal cyclic coinvariant subspace for $(A,\sigma)$.}
  \end{definition}

  The following proof is due to Ken Davidson.

  \begin{lemma}\label{cstaralg}
   Let $\Sc{V}$ be a finite dimensional Hilbert space. Suppose $\F{A}\subseteq\Sc{B}(\Sc{V})$ is an algebra and that $\Sc{V}$ is a minimal cyclic coinvariant space for $\F{A}$. Then $\F{A}$ is a $C^*$-algebra.
  \end{lemma}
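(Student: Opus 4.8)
The plan is to prove that $\F{A}$ is self-adjoint; since $\Sc{V}$ is finite dimensional a self-adjoint subalgebra of $\Sc{B}(\Sc{V})$ is automatically norm-closed, so this makes $\F{A}$ a $C^*$-algebra. Throughout I use that $\F{A}\subseteq\Sc{B}(\Sc{V})$ acts faithfully and unitally, together with the elementary dictionary that a subspace is $\F{A}^*$-invariant if and only if its orthogonal complement is $\F{A}$-invariant.

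The first step is to reduce to the semisimple case. Let $\Sc{R}=\operatorname{rad}(\F{A})$. Since $\Sc{R}$ is an ideal, $\Sc{R}\Sc{V}$ is $\F{A}$-invariant, so $(\Sc{R}\Sc{V})^\perp$ is $\F{A}^*$-invariant. Because $(\Sc{R}\Sc{V})^\perp+\Sc{R}\Sc{V}=\Sc{V}$, Nakayama's lemma gives $\F{A}[(\Sc{R}\Sc{V})^\perp]=\Sc{V}$, so $(\Sc{R}\Sc{V})^\perp$ is a cyclic coinvariant subspace. Minimality of $\Sc{V}$ forces $(\Sc{R}\Sc{V})^\perp=\Sc{V}$, hence $\Sc{R}\Sc{V}=\{0\}$ and, by faithfulness, $\Sc{R}=\{0\}$. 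Thus $\F{A}$ is semisimple and $\Sc{V}$ is a semisimple $\F{A}$-module.

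Next I would reformulate the hypothesis and show that simple invariant subspaces reduce. For an $\F{A}^*$-invariant subspace $\Sc{W}$, the dictionary shows that $\Sc{W}$ is cyclic for $\F{A}$ precisely when $\Sc{W}^\perp$ contains no nonzero $\F{A}^*$-invariant subspace; hence minimality of $\Sc{V}$ is equivalent to the statement that every nonzero $\F{A}$-invariant subspace contains a nonzero $\F{A}^*$-invariant subspace. Applying this to a minimal (simple) $\F{A}$-invariant subspace $\Sc{U}$, we obtain a nonzero $\F{A}^*$-invariant $\Sc{U}'\subseteq\Sc{U}$. Then $\Sc{U}\cap(\Sc{U}')^\perp$ is $\F{A}$-invariant and contained in $\Sc{U}$, so by minimality it is $\{0\}$ or $\Sc{U}$; it cannot be $\Sc{U}$, as that would force $\Sc{U}'\subseteq(\Sc{U}')^\perp$ and hence $\Sc{U}'=\{0\}$. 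Therefore $\Sc{U}\cap(\Sc{U}')^\perp=\{0\}$, which with $\Sc{U}'\subseteq\Sc{U}$ yields $\Sc{U}'=\Sc{U}$. So every simple $\F{A}$-invariant subspace is $\F{A}^*$-invariant, i.e.\ reduces $\F{A}$.

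Finally I would assemble these facts into self-adjointness. Since $\Sc{V}$ is semisimple, every $\F{A}$-invariant subspace is a sum of simple ones, each of which reduces; a sum of reducing subspaces reduces, so $\F{A}$ is reductive. Consequently the isotypic components of $\Sc{V}$ reduce, and they are mutually orthogonal (by Schur's lemma, since the orthogonal projection onto one restricts to a module map into another of a different type, which vanishes); their orthogonal projections are the central idempotents of $\F{A}$ and so lie in $\F{A}$, splitting $\F{A}$ as an orthogonal direct sum over components. It thus suffices to treat a single homogeneous component, where $\F{A}$ acts as a copy of some $M_d(\B{C})$, full on each simple summand by Burnside's theorem. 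The main obstacle is exactly here: after choosing an orthogonal decomposition into simple reducing summands, the algebra is determined by the module isomorphisms $T\colon\Sc{U}_a\to\Sc{U}_b$ between them, and I must show each $T$ is a scalar multiple of a unitary. For this I apply the previous step to the graph $\{\,u\oplus Tu:u\in\Sc{U}_a\,\}$, a simple invariant subspace which therefore reduces; its orthogonal complement $\{\,-T^{*}v\oplus v:v\in\Sc{U}_b\,\}$ is then invariant, forcing $-T^{*}$ to be an intertwiner $\Sc{U}_b\to\Sc{U}_a$ and hence, by Schur, a scalar multiple of $T^{-1}$. This makes $T^{*}T$ scalar, so $T$ is a scalar times a unitary. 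Rescaling the identifications to make all such intertwiners unitary exhibits $\F{A}$ on each homogeneous component as a standard ampliation $\{\,\bigoplus_a U_aXU_a^{*}:X\in M_d(\B{C})\,\}$, which is self-adjoint; hence $\F{A}$ is self-adjoint and the lemma follows.
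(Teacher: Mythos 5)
Your proof is correct, but it follows a genuinely different route from the paper's. The paper works entirely at the level of the invariant-subspace lattice: if $\Sc{L}$ is $\F{A}^*$-invariant but $\Sc{V}\ominus\Sc{L}$ is not, it sets $\Sc{M}=\F{A}[\Sc{L}]$, checks $\Sc{L}\subsetneq\Sc{M}\subsetneq\Sc{V}$, and observes that $\Sc{L}\oplus(\Sc{V}\ominus\Sc{M})$ is then a \emph{proper} cyclic coinvariant subspace, contradicting minimality; hence every coinvariant subspace reduces, and the final sentence silently invokes the classical fact that a unital algebra on a finite-dimensional space all of whose invariant subspaces reduce is self-adjoint. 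You instead first remove the radical by a Nakayama argument, prove only that \emph{simple} invariant subspaces reduce, and then carry out the Wedderburn/isotypic analysis in full, including the graph-subspace computation showing that the intertwiners between simple summands are scalar multiples of unitaries. In effect you have written out a self-contained proof of the reductive-algebra step that the paper treats as standard, at the cost of considerably more machinery; the paper's argument is shorter and more elementary at the front end but leans on that classical result at the back end. Your reformulation of minimality --- every nonzero $\F{A}$-invariant subspace contains a nonzero $\F{A}^*$-invariant subspace --- is a clean equivalent form that either approach could use. Both proofs implicitly use that $\F{A}$ is unital (so that $\Sc{W}\subseteq\F{A}[\Sc{W}]$), which is harmless since the lemma is only ever applied to unital algebras in this paper.
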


  \begin{proof}
   Suppose $\Sc{L}$ is an $\F{A}^*$-invariant subspace such that $\Sc{V}\ominus\Sc{L}$ is not $\F{A}^*$-invariant. Let $\Sc{M}=\F{A}[\Sc{L}]$. Then $\Sc{L}\subsetneq\Sc{M}$ and $\Sc{M}\subsetneq\Sc{V}$. So $\Sc{V}\ominus\Sc{M}$ is a non-zero $\F{A}^*$-invariant subspace such that $\Sc{V}\ominus\Sc{M}\subsetneq\Sc{V}\ominus\Sc{L}$. We have that $\Sc{L}\oplus(\Sc{V}\ominus\Sc{M})$ is an $\F{A}^*$-invariant subspace and $\F{A}[\Sc{L}\oplus(\Sc{V}\ominus\Sc{M})]=\Sc{V}$. Hence, by our assumption that $\Sc{V}$ is a minimal cyclic coinvariant space, $\Sc{V}=\Sc{L}\oplus(\Sc{V}\ominus\Sc{M})$. This is a contradiction. Hence if $\Sc{L}$ is an $\F{A}^*$-invariant subspace then $\Sc{V}\ominus\Sc{L}$ must also be $\F{A}^*$-invariant. Since $\Sc{V}$ is finite dimensional, it follows that $\F{A}$ is a $C^*$-algebra.
  \end{proof}

  \begin{lemma}\label{uniqueminimal2}
	Let $(A,\sigma)$ be a fully coisometric representation of a $C^*$-corr\-esp\-ondence $E$ on a finite dimensional Hilbert space $\Sc{V}$, and let $(S,\rho)$ be the unique minimal isometric dilation of $(A,\sigma)$ on a Hilbert space $\Sc{H}$. Let $\F{A}$ be the unital algebra generated by the representation $(A,\sigma)$ and let $\F{S}$ be the unital \textsc{wot}-closed algebra generated by $(S,\rho)$.

   If $\Sc{V}_1,\Sc{V}_2,\ldots,\Sc{V}_k$ is a maximal set of pairwise orthogonal minimal $\F{A}^*$-invar\-iant spaces of $\Sc{V}$ then $\hat{\Sc{V}}=\Sc{V}_1\oplus\ldots\oplus\Sc{V}_k$ is the unique minimal cyclic coinvariant subspace of $\Sc{V}$.
  \end{lemma}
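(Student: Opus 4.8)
The plan is to verify the three defining properties of a minimal cyclic coinvariant subspace for $\hat{\Sc V}=\Sc V_1\oplus\cdots\oplus\Sc V_k$ --- that it is $\F A^*$-invariant, cyclic, and admits no proper cyclic coinvariant subspace --- and then to establish uniqueness. Being a direct sum of $\F A^*$-invariant subspaces, $\hat{\Sc V}$ is trivially $\F A^*$-invariant. For cyclicity I would argue by maximality: if $\F A[\hat{\Sc V}]\neq\Sc V$, then $\Sc V\ominus\F A[\hat{\Sc V}]$ is a non-zero $\F A^*$-invariant subspace (the orthogonal complement of an $\F A$-invariant subspace), so it contains a minimal $\F A^*$-invariant subspace $\Sc V_0$; since $\Sc V_0\perp\F A[\hat{\Sc V}]\supseteq\hat{\Sc V}$, it is orthogonal to every $\Sc V_i$, and $\{\Sc V_1,\dots,\Sc V_k,\Sc V_0\}$ contradicts maximality. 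Hence $\F A[\hat{\Sc V}]=\Sc V$.

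The central structural input is the orthogonal reducing decomposition of $\Sc V$. By Lemma \ref{reducingspace} the subspaces $\F S[\Sc V_i]$ reduce $\F S$ and are pairwise orthogonal, and by Proposition \ref{reducingspacecor} we have $\F S[\Sc V_i]\cap\Sc V=\F A[\Sc V_i]$; writing $\Sc N_i=\F A[\Sc V_i]$, this makes the $\Sc N_i$ pairwise orthogonal, and cyclicity gives $\Sc V=\bigoplus_{i}\Sc N_i$ with each $\Sc N_i$ reducing $\F A$. I would then pass to the compression $\hat{\F A}=P_{\hat{\Sc V}}\F A|_{\hat{\Sc V}}$, which is a homomorphic image of $\F A$ since $\hat{\Sc V}$ is coinvariant. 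Because $\F A$ carries $\Sc V_i$ into $\Sc N_i\perp\Sc V_j$ for $j\neq i$, the algebra $\hat{\F A}$ is block diagonal with respect to $\bigoplus_i\Sc V_i$; and each diagonal block equals $\Sc B(\Sc V_i)$, because $\Sc V_i$ is its own minimal cyclic coinvariant space (its only $\F A^*$-invariant subspaces are $\{0\}$ and $\Sc V_i$, by minimality), so Lemma \ref{cstaralg} forces the compressed algebra on $\Sc V_i$ to be a $C^*$-algebra acting irreducibly, hence all of $\Sc B(\Sc V_i)$.

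The crux is to show that $\hat{\F A}$ is itself a $C^*$-algebra; granting this, minimality follows at once, since any $\F A^*$-invariant $\Sc U\subseteq\hat{\Sc V}$ is then $\hat{\F A}$-reducing, so $\hat{\F A}[\Sc U]=\Sc U$, and if $\Sc U$ is cyclic (equivalently $\hat{\F A}[\Sc U]=\hat{\Sc V}$) then $\Sc U=\hat{\Sc V}$. Self-adjointness of $\hat{\F A}$ is exactly the point where the fully coisometric hypothesis is used: a block-diagonal algebra surjecting onto each $\Sc B(\Sc V_i)$ is glued only by algebra isomorphisms between equivalent blocks, and such gluings are $*$-isomorphisms precisely when they are implemented by unitaries. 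To produce these unitaries I would apply Lemma \ref{commutantfixedpoint} and Lemma \ref{davkrishpLemma5.11}: whenever the representations on $\Sc V_i$ and $\Sc V_j$ are equivalent, the commutant of $\hat{\F A}$ contains a non-zero intertwiner, which by Lemma \ref{commutantfixedpoint} is a fixed point of $\Phi_{\hat A}$ with non-zero off-diagonal block, and Lemma \ref{davkrishpLemma5.11} then upgrades the intertwining isomorphism to a unitary equivalence; inequivalent blocks are independent by a density argument. Thus every gluing is unitary and $\hat{\F A}=\hat{\F A}^*$. I expect this self-adjointness --- equivalently, the unitarity of the gluing maps --- to be the main obstacle, as it is the only step that genuinely depends on the representation-theoretic content of Lemmas \ref{davkrishpLemma5.10} and \ref{davkrishpLemma5.11} rather than on soft dilation theory.

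Finally, for uniqueness let $\Sc V'$ be any minimal cyclic coinvariant subspace. Since coinvariance and cyclicity pass to the compression, $\Sc V'$ is a minimal cyclic coinvariant space for the compressed algebra $\F B=P_{\Sc V'}\F A|_{\Sc V'}$, so Lemma \ref{cstaralg} makes $\F B$ a $C^*$-algebra and exhibits $\Sc V'$ as an orthogonal direct sum of minimal $\F A^*$-invariant subspaces. Cyclicity together with minimality forces this family to be a maximal pairwise orthogonal family of minimal $\F A^*$-invariant subspaces, exactly as in the maximality argument above. It then remains to see that the span of any such maximal family is the same subspace: grouping minimal invariants by unitary equivalence class, the $C^*$-structure established for $\hat{\F A}$ shows each isotypic component is spanned by any maximal orthogonal set of copies within it, so both $\Sc V'$ and $\hat{\Sc V}$ coincide with the (family-independent) sum of all minimal $\F A^*$-invariant subspaces, with Corollary \ref{uniqueminimal} used to match the individual summands. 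Hence $\Sc V'=\hat{\Sc V}$.
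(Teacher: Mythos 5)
Your argument for coinvariance and cyclicity is correct (and the cyclicity step is actually more elementary than the paper's, which detours through $\Sc{H}$ and Lemma \ref{lemma41}; yours works entirely inside the finite-dimensional $\Sc{V}$). Your route to minimality --- first proving that $\hat{\F{A}}=P_{\hat{\Sc{V}}}\F{A}|_{\hat{\Sc{V}}}$ is a $C^*$-algebra via the block structure, Lemma \ref{commutantfixedpoint} and Lemma \ref{davkrishpLemma5.11}, then observing that a cyclic coinvariant subspace of $\hat{\Sc{V}}$ must be all of $\hat{\Sc{V}}$ --- is workable, though considerably heavier than what the paper does: there, minimality and uniqueness both fall out of a single containment argument, and the $C^*$-ness of the compression is deduced \emph{afterwards} by the soft Lemma \ref{cstaralg}. (Do note that your gluing argument tacitly uses that the compression of $(A,\sigma)$ to the coinvariant, $\sigma$-reducing subspace $\hat{\Sc{V}}$, and to each pair $\Sc{V}_i\oplus\Sc{V}_j$, is again fully coisometric; this is a routine check but it is needed before Lemmas \ref{commutantfixedpoint} and \ref{davkrishpLemma5.11} apply.)

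The genuine gap is in your uniqueness step. You assert that both $\Sc{V}'$ and $\hat{\Sc{V}}$ coincide with ``the sum of all minimal $\F{A}^*$-invariant subspaces,'' deducing this from the $C^*$-structure of $\hat{\F{A}}$ and an isotypic decomposition. But that structure is internal to $\hat{\Sc{V}}$ and cannot rule out minimal coinvariant subspaces that stick outside $\hat{\Sc{V}}$. The paper's own Remark \ref{cstaralg remark} supplies a counterexample to the soft version of your inference: the algebra $\F{C}\subseteq\Sc{B}(\B{C}^2)$ there satisfies every property you have established by that point --- each compression $q_i(\F{C})$ to a minimal coinvariant subspace is $\Sc{B}(\Sc{V}_i)$, the compression to the span of a maximal orthogonal family is a $C^*$-algebra, and $\Sc{V}=\F{C}[\hat{\Sc{V}}]$ --- yet it has two distinct one-dimensional minimal cyclic coinvariant subspaces, neither equal to the sum of all minimal coinvariant subspaces. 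So uniqueness cannot follow from the facts you have assembled; the dilation hypotheses must re-enter. The missing argument is the paper's second paragraph: writing $\Sc{H}_j=\F{S}[\Sc{V}_j]$, each $\Sc{H}_j$ is irreducible with $\Sc{V}_j$ as its \emph{unique} minimal coinvariant subspace (Corollaries \ref{lemma41corollary2} and \ref{uniqueminimal}, using that $q_j(\F{A})=\Sc{B}(\Sc{V}_j)$); for any cyclic coinvariant $\Sc{W}$ one then shows $\Sc{H}_j\cap\Sc{W}$ is a non-zero coinvariant subspace of $\Sc{H}_j$, hence contains $\Sc{V}_j$, giving $\hat{\Sc{V}}\subseteq\Sc{W}$. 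This single containment yields minimality and uniqueness simultaneously, and it is the step your proposal does not supply.
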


  \begin{proof}
   Firstly, $\F{S}[\hat{\Sc{V}}]$ is a $\F{S}$-reducing subspace by Lemma \ref{reducingspace}. If $\F{S}[\hat{\Sc{V}}]$ is not all of $\Sc{H}$ then its orthogonal complement in $\Sc{H}$, $\Sc{M}$, is also a $\F{S}$-reducing space. By Lemma \ref{lemma41} $\Sc{M}\cap\Sc{V}$ is a non-zero $\F{A}^*$-invariant space orthogonal to each $\Sc{V}_j$ for $1\leq j\leq k$. This contradicts the maximality of our choice of $\Sc{V}_1,\ldots,\Sc{V}_k$. Hence, by Proposition \ref{reducingspacecor}, $\hat{\Sc{V}}$ is $\F{A}$-cyclic. Since each $\Sc{V}_j$ is a minimal $\F{A}^*$-invariant space and since the $\F{S}$-reducing spaces $\F{S}[\Sc{V}_j]$ are orthogonal by Lemma \ref{reducingspace} it follows that $\hat{\Sc{V}}$ is indeed a minimal cyclic coinvariant subspace of $\Sc{V}$.

   Now suppose that $\Sc{W}$ is an $\F{A}^*$-invariant subspace of $\Sc{V}$ such that $\F{S}[\Sc{W}]=\Sc{H}$, i.e. $\F{A}[\Sc{W}]=\Sc{V}$. Let $\Sc{H}_j=\F{S}[\Sc{V}_j]$ for each $j$. We have that $\Sc{H}_j\subseteq\F{S}[\Sc{W}]$ for each $j$ and hence $\Sc{H}_j\cap\Sc{W}$ is non-zero. But each $\Sc{H}_j$ is irreducible by Corollary \ref{lemma41corollary2} and hence $\Sc{V}_j$ is the unique minimal $\F{S}^*$-invariant subspace of $\Sc{H}_j$ by Corollary \ref{uniqueminimal}. It follows that $\Sc{V}_j$ is contained in $\Sc{H}_j\cap\Sc{W}$ for each $j$. Therefore $\hat{\Sc{V}}\subseteq\Sc{W}$.
  \end{proof}

  \begin{remark}\label{cstaralg remark}
	In \cite{DavKriShp}, Lemma \ref{cstaralg} is proved for the case when $\F{A}$ is the unital algebra generated by a finite dimensional, fully coisometric representation $(A,\sigma)$ of the $C^*$-correspondence $\B{C}^n$ over $\B{C}$ (\cite[Part of Theorem 5.13]{DavKriShp}). The proof uses  analysis of $\Phi_A$ and the fact that $\hat{\Sc{V}}$ is a direct sum of minimal $\F{A}^*$-invariant subspaces. We note that the proof presented here shows that the result is in fact just a general result about cyclic, coinvariant subspaces in finite-dimensions, independent of any deeper analysis.

   However, that the minimal cyclic coinvariant space is unique is not a general result in finite dimensional linear algebra. For example, the algebra
   \begin{equation*}
    \F{C}=\left\{\begin{bmatrix}\lambda&0\\ \gamma-\lambda&\gamma\end{bmatrix}:\lambda,\gamma\in\B{C}\right\}
   \end{equation*}
   in $\Sc{B}(\B{C}^2)$ has both $\{(x,0):x\in\B{C}\}$ and $\{(x,x):x\in\B{C}\}$ as minimal cyclic coinvariant spaces.

   While it is shown that the minimal cyclic coinvariant space $\hat{\Sc{V}}$ in Lemma \ref{uniqueminimal2} is unique, the decomposition of $\hat{\Sc{V}}$ into a direct sum of minimal coinvariant subspaces is not necessarily unique. For example, suppose $\F{A}^*$ has two 1-dimensional invariant, orthogonal subspaces $\Sc{V}_1$ and $\Sc{V}_2$ and that the representation $(P_{\Sc{V}_1}A(\cdot)|_\Sc{V},\sigma(\cdot)|_{\Sc{V}_1})$ is unitarily equivalent to $(P_{\Sc{V}_2}A(\cdot)|_\Sc{V},\sigma(\cdot)|_{\Sc{V}_2})$. Let $U$ be the unitary defining the equivalence. Take a unit vector $v_1\in\Sc{V}_1$ and let $v_2=Uv_1$. Then $\Sc{V}'_1=\spa\{v_1+v_2\}$ and $\Sc{V}'_2=\spa\{v_1-v_2\}$ are orthogonal, $\F{A}^*$-invariant subspaces and
  \begin{equation*}
    \Sc{V}_1\oplus\Sc{V}_2=\Sc{V}'_1\oplus\Sc{V}'_2.
  \end{equation*}
  \end{remark}

  We follow the argument given in \cite[Theorem 5.13]{DavKriShp} for the following result. This serves as a converse to Lemma \ref{commutantfixedpoint}.

  \begin{lemma}\label{commutantfixedpoint2}
	Let $(A,\sigma)$ be a fully coisometric representation of a $C^*$-correspond\-ence $E$ over a $C^*$-algebra $\Sc{A}$. Let $\F{A}$ be the unital algebra generated by the representation $(A,\sigma)$ and let $\Phi_A$ be the map from $\sigma(\Sc{A})'$ to  $\sigma(\Sc{A})'$ defined in Definition \ref{Phi map}.

	Suppose $\Sc{V}=\hat{\Sc{V}}$, where $\hat{\Sc{V}}=\Sc{V}_1\oplus\ldots\oplus\Sc{V}_k$ is as in Lemma $\ref{uniqueminimal2}$. Then the fixed point set of $\Phi_A$ is equal to the commutant of $\F{A}$.
  \end{lemma}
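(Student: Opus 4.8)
The plan is to prove the two inclusions separately. Lemma \ref{commutantfixedpoint} already gives that every $X$ in the commutant of $\F{A}$ is a fixed point of $\Phi_A$, so the work is to establish the reverse inclusion: every fixed point of $\Phi_A$ lies in $\F{A}'$. Throughout I write $(A_j,\sigma_j)$ for the restriction of $(A,\sigma)$ to $\Sc{V}_j$.

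First I would set up the block structure. Since $\Sc{V}=\hat{\Sc{V}}$ is a minimal cyclic coinvariant space for $\F{A}$, Lemma \ref{cstaralg} shows that $\F{A}$ is a $C^*$-algebra; hence each $\F{A}^*$-invariant summand $\Sc{V}_j$ is in fact $\F{A}$-reducing, and $\Sc{V}=\Sc{V}_1\oplus\cdots\oplus\Sc{V}_k$ reduces $\F{A}$. Consequently $A(\xi)=\bigoplus_j A_j(\xi)$, $\sigma(a)=\bigoplus_j\sigma_j(a)$ and $\tilde{A}=\bigoplus_j\tilde{A}_j$ are all block-diagonal. Because each $\Sc{V}_j$ is a minimal reducing subspace for the $C^*$-algebra $\F{A}$, the restriction acts irreducibly, so $\Sc{B}(\Sc{V}_j)=\Alg\{A_j(\xi),\sigma_j(a):\xi\in E,a\in\Sc{A}\}$; this verifies the standing hypotheses of Lemmas \ref{davkrishpLemma5.10} and \ref{davkrishpLemma5.11}. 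Writing a fixed point $X\in\sigma(\Sc{A})'$ as a block matrix $X=[X_{ij}]$ with $X_{ij}=P_{\Sc{V}_i}X|_{\Sc{V}_j}$, block-diagonality of $\tilde{A}$ gives $\Phi_A(X)_{ij}=\tilde{A}_i(I\otimes X_{ij})\tilde{A}_j^*$, so $\Phi_A(X)=X$ is equivalent to each block $X_{ij}$ being fixed by the map $X_{ij}\mapsto\tilde{A}_i(I\otimes X_{ij})\tilde{A}_j^*$.

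Next I would analyse the blocks. On the diagonal, $X_{jj}\in\sigma_j(\Sc{A})'$ is a fixed point of $\Phi_{A_j}$; were it non-scalar, Lemma \ref{davkrishpLemma5.10} applied to $(A_j,\sigma_j)$ would force $\Sc{V}_j$ to contain two orthogonal nonzero invariant subspaces, contradicting its irreducibility, so $X_{jj}=\lambda_j I_{\Sc{V}_j}$. Off the diagonal, for $i\neq j$ I would restrict to the reducing subspace $\Sc{V}_i\oplus\Sc{V}_j$, noting this restriction is again fully coisometric and that $X'=P_{\Sc{V}_i\oplus\Sc{V}_j}XP_{\Sc{V}_i\oplus\Sc{V}_j}$ is a fixed point of the corresponding $\Phi$-map. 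If $X_{ij}\neq0$, Lemma \ref{davkrishpLemma5.11} supplies a unitary $W$ intertwining $(A_i,\sigma_i)$ and $(A_j,\sigma_j)$, with the off-diagonal fixed blocks being scalar multiples of $W$ and $W^*$; otherwise $X_{ij}=0$.

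Finally I would assemble the conclusion. In every case the block $X_{ij}$ intertwines the generators, i.e.\ $A_i(\xi)X_{ij}=X_{ij}A_j(\xi)$ for all $\xi\in E$: trivially for scalar diagonal or zero off-diagonal blocks, and because $W$ is unitary (so that both $W$ and $W^*$ intertwine the two irreducible pieces) in the remaining cases. Since $A(\xi)$ is block-diagonal, comparing $(i,j)$-entries gives $A(\xi)X=XA(\xi)$ for every $\xi$, while $X\in\sigma(\Sc{A})'$ already gives $X\sigma(a)=\sigma(a)X$; hence $X$ commutes with all generators of $\F{A}$ and $X\in\F{A}'$. Together with Lemma \ref{commutantfixedpoint} this yields equality of the fixed point set of $\Phi_A$ with the commutant of $\F{A}$. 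The main obstacle is the reduction to $2\times2$ blocks: one must use Lemma \ref{cstaralg} to upgrade the merely coinvariant summands $\Sc{V}_j$ to \emph{reducing} subspaces, so that $\tilde{A}$ and hence $\Phi_A$ genuinely decompose, and then verify that the compressed operator $X'$ remains a fixed point of the restricted map so that Lemma \ref{davkrishpLemma5.11} is applicable.
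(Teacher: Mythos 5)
Your proposal is correct and follows essentially the same route as the paper: decompose a fixed point $X$ into blocks relative to $\hat{\Sc{V}}=\Sc{V}_1\oplus\cdots\oplus\Sc{V}_k$ and apply Lemma \ref{davkrishpLemma5.11} to conclude each off-diagonal block is either zero or a scalar multiple of an intertwining unitary, hence lies in $\F{A}'$. You simply make explicit several steps the paper leaves implicit (using Lemma \ref{cstaralg} to see the $\Sc{V}_j$ are reducing so that $\Phi_A$ decomposes blockwise, and Lemma \ref{davkrishpLemma5.10} for the diagonal blocks), which is a faithful elaboration rather than a different argument.
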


  \begin{proof}
   We have already shown in Lemma \ref{commutantfixedpoint} that if $X\in\F{A}'$ then $\Phi_A(X)=X$. Take $X\in\sigma(\Sc{A})'$ such that $\Phi_A(X)=X$. Suppose that $X$ is non-scalar. If there is no unitary between $\Sc{V}_k$ and $\Sc{V}_l$ intertwining $\F{A}$ then, by Lemma \ref{davkrishpLemma5.11}, $P_{\Sc{V}_k}XP_{\Sc{V}_l}=0$. On the other hand, if $W_{k,l}$ is an intertwining unitary from $\Sc{V}_k$ to $\Sc{V}_l$ then Lemma \ref{davkrishpLemma5.11} tells us that $P_{\Sc{V}_k}XP_{\Sc{V}_l}=x_{kl}W_{k,l}$ for some scalar $x_{kl}$, and hence $P_{\Sc{V}_k}XP_{\Sc{V}_l}$ is in $\F{A}'$. It follows that $X\in\F{A}'$.
  \end{proof}

  The following theorem summarises our main results.

  \begin{theorem}\label{summary}
   Suppose $E$ is a $C^*$-correspondence over a $C^*$-algebra $\Sc{A}$. Let $(A,\sigma)$ be a fully coisometric, finite dimensional representation of $E$ on a Hilbert space $\Sc{V}$, and let $(S,\rho)$ be the minimal isometric dilation of $(A,\sigma)$ on $\Sc{H}$. Let $\F{A}$ be the unital algebra generated by $(A,\sigma)$ and $\F{S}$ be the unital \textsc{wot}-closed algebra generated by $(S,\rho)$.

   If
   \begin{equation*}
    \hat{\Sc{V}}=\sideset{}{^\oplus}\sum_{j=1}^n\Sc{V}_j
   \end{equation*}
   is a maximal direct sum of minimal, orthogonal $\F{A}^*$-invariant subspaces of $\Sc{V}$, then $\hat{\Sc{V}}$ is the \emph{unique} minimal $\F{A}^*$-invariant subspace such that $\F{S}[\hat{\Sc{V}}]=\Sc{H}$. Further 
   \begin{equation*}
    \Sc{H}=\sideset{}{^\oplus}\sum_{j=1}^n\Sc{H}_j
   \end{equation*}
   where $\Sc{H}_j=\F{S}[\Sc{V}_j]$.

   The representation $(P_{\hat{\Sc{V}}^\perp}S(\cdot)|_{\hat{\Sc{V}}^\perp},\rho(\cdot)|_{\hat{\Sc{V}}^\perp})$ is an induced representation and $\F{S}^*|_{\hat{\Sc{V}}}$ is a $C^*$-algebra.
  \end{theorem}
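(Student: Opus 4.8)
The plan is to assemble Theorem \ref{summary} by stitching together the lemmas already established, since each clause of the statement corresponds almost directly to a prior result. First I would address the existence and the structure of the decomposition $\hat{\Sc{V}}=\sideset{}{^\oplus}\sum_{j=1}^n\Sc{V}_j$: take any maximal set $\Sc{V}_1,\ldots,\Sc{V}_n$ of pairwise orthogonal minimal $\F{A}^*$-invariant subspaces of $\Sc{V}$ (such a maximal set exists because $\Sc{V}$ is finite dimensional). Uniqueness of $\hat{\Sc{V}}$ as the minimal $\F{A}^*$-invariant subspace with $\F{S}[\hat{\Sc{V}}]=\Sc{H}$, together with the fact that $\hat{\Sc{V}}$ is the unique minimal cyclic coinvariant subspace, is exactly the content of Lemma \ref{uniqueminimal2}, so I would simply invoke it.

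Next I would handle the direct-sum decomposition $\Sc{H}=\sideset{}{^\oplus}\sum_{j=1}^n\Sc{H}_j$ with $\Sc{H}_j=\F{S}[\Sc{V}_j]$. Lemma \ref{reducingspace} gives that each $\Sc{H}_j$ reduces $\F{S}$ and that the $\Sc{H}_j$ are mutually orthogonal (the $\Sc{V}_j$ being orthogonal $\F{A}^*$-invariant subspaces). It then suffices to see that $\bigoplus_j\Sc{H}_j=\Sc{H}$: this is because $\bigoplus_j\Sc{H}_j=\F{S}[\hat{\Sc{V}}]=\Sc{H}$ by the cyclicity of $\hat{\Sc{V}}$ established in Lemma \ref{uniqueminimal2}.

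For the penultimate claim, that $(P_{\hat{\Sc{V}}^\perp}S(\cdot)|_{\hat{\Sc{V}}^\perp},\rho(\cdot)|_{\hat{\Sc{V}}^\perp})$ is an induced representation, I would apply Lemma \ref{wanderingspace} with $\Sc{V}$ replaced by $\hat{\Sc{V}}$: since $\hat{\Sc{V}}$ is cyclic, $(S,\rho)$ is the minimal isometric dilation of its own compression to $\hat{\Sc{V}}$ (as observed in the discussion preceding the definition of minimal cyclic coinvariant subspace, via Proposition \ref{reducingspacecor}), so the restriction of $(S,\rho)$ to $\hat{\Sc{V}}^\perp$ is precisely the representation induced by $\rho(\cdot)|_{\Sc{W}}$ for the appropriate wandering space $\Sc{W}$, hence an induced representation.

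The main obstacle, and the only clause requiring genuine work, is showing that $\F{S}^*|_{\hat{\Sc{V}}}$ is a $C^*$-algebra. Here I would argue that compressing $\F{S}$ to $\hat{\Sc{V}}$ recovers $\F{A}$ acting on $\hat{\Sc{V}}=\Sc{V}$, and that $\hat{\Sc{V}}$ is a minimal cyclic coinvariant space for this compressed algebra; Lemma \ref{cstaralg} then forces the algebra to be self-adjoint, so that $\F{S}^*|_{\hat{\Sc{V}}}$ is a $C^*$-algebra. The delicate point is verifying that the compression map $T\mapsto P_{\hat{\Sc{V}}}T|_{\hat{\Sc{V}}}$ carries $\F{S}$ onto exactly the algebra $\F{A}$ generated by $(A,\sigma)$ on $\hat{\Sc{V}}$ and that the hypotheses of Lemma \ref{cstaralg} (finite dimensionality and the minimal cyclic coinvariant property) are in force; once that identification is made the conclusion is immediate.
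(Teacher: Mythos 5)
Your assembly matches the paper's own treatment: Theorem \ref{summary} is stated as a summary and carries no separate proof, being exactly the concatenation of Lemma \ref{uniqueminimal2} (uniqueness of $\hat{\Sc{V}}$), Lemma \ref{reducingspace} (orthogonality and reducibility of the $\Sc{H}_j$), Lemma \ref{wanderingspace} applied with $\Sc{V}$ replaced by $\hat{\Sc{V}}$ (the induced representation on $\hat{\Sc{V}}^\perp$), and Lemma \ref{cstaralg} (self-adjointness of the compression). The first three clauses are handled correctly and completely.

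The one place you stop short is the clause you yourself flag as delicate, and it deserves to be closed rather than deferred. Two corrections are needed. First, the compression of $\F{S}$ to $\hat{\Sc{V}}$ is not ``$\F{A}$ acting on $\hat{\Sc{V}}=\Sc{V}$'': in general $\hat{\Sc{V}}\subsetneq\Sc{V}$, and since $\hat{\Sc{V}}$ is $\F{S}^*$-invariant the compression $T\mapsto P_{\hat{\Sc{V}}}T|_{\hat{\Sc{V}}}$ is multiplicative on $\F{S}$ and carries it onto $\hat{\F{A}}$, the unital algebra generated by the compressed representation $(\hat{A},\hat{\sigma})=(P_{\hat{\Sc{V}}}A(\cdot)|_{\hat{\Sc{V}}},\sigma(\cdot)|_{\hat{\Sc{V}}})$. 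Second, to invoke Lemma \ref{cstaralg} you must show that $\hat{\Sc{V}}$ is a minimal cyclic coinvariant space \emph{for $\hat{\F{A}}$}, and this does not follow formally from $\hat{\Sc{V}}$ being the minimal cyclic coinvariant subspace of $\Sc{V}$ for $\F{A}$ (cyclicity for $\hat{\F{A}}$ means $\hat{\F{A}}[\Sc{L}]=\hat{\Sc{V}}$, a weaker demand than $\F{A}[\Sc{L}]=\Sc{V}$). The way to close it is to observe that $\hat{\Sc{V}}$ reduces $\sigma$ and that $\tilde{A}^*\hat{\Sc{V}}\subseteq E\otimes\hat{\Sc{V}}$ because $\hat{\Sc{V}}$ is $\F{A}^*$-invariant, whence $(\hat{A},\hat{\sigma})$ is again fully coisometric and $(S,\rho)$ is its minimal isometric dilation (by Proposition \ref{reducingspacecor}, since $\F{S}[\hat{\Sc{V}}]=\Sc{H}$). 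Now re-apply Lemma \ref{uniqueminimal2} to $(\hat{A},\hat{\sigma})$: the spaces $\Sc{V}_1,\ldots,\Sc{V}_n$ are minimal $\hat{\F{A}}^*$-invariant subspaces of $\hat{\Sc{V}}$ whose direct sum is all of $\hat{\Sc{V}}$, hence trivially a maximal orthogonal family, so $\hat{\Sc{V}}$ is its own unique minimal cyclic coinvariant subspace for $\hat{\F{A}}$. Lemma \ref{cstaralg} then gives that $\hat{\F{A}}$, and therefore $\F{S}^*|_{\hat{\Sc{V}}}=\hat{\F{A}}^*$, is a $C^*$-algebra.
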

  
  We now show that the compression to the minimal cyclic coinvariant space for a finitely correlated, fully coisometric representation is a complete unitary invariant.

  \begin{corollary}\label{complete unitary invariant}
   Suppose $(S,\sigma)$ and $(T,\tau)$ are finitely correlated, isometric, fully coisometric representations of a $C^*$-correspondence $E$ on $\Sc{H}_S$ and $\Sc{H}_T$ respectively. Let $\Sc{V}_S$ be the unique minimal cyclic coinvariant subspace for $(S,\sigma)$ and let $\Sc{V}_T$ be the unique minimal cyclic subspace for $(T,\tau)$.

   Then $(S,\sigma)$ and $(T,\tau)$ are unitarily equivalent if and only if the finite dimensional representations $(P_{\Sc{V}_S}S(\cdot)|_{\Sc{V}_S},\sigma(\cdot)|_{\Sc{V}_S})$ and $(P_{\Sc{V}_T}T(\cdot)|_{\Sc{V}_T},\tau(\cdot)|_{\Sc{V}_T})$ are unitarily equivalent.
  \end{corollary}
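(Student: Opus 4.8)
The plan is to prove both directions of the equivalence, with the forward direction being essentially trivial and the reverse direction being where the real work lies. For the forward direction, suppose $U:\Sc{H}_S\to\Sc{H}_T$ is a unitary implementing the unitary equivalence of $(S,\sigma)$ and $(T,\tau)$, so that $US(\xi)=T(\xi)U$ and $U\sigma(a)=\tau(a)U$ for all $\xi\in E$ and $a\in\Sc{A}$. Then $U$ carries $\F{S}$-reducing subspaces to $\F{T}$-reducing subspaces and intertwines the two \textsc{wot}-closed algebras. The key point is that $U$ must carry the unique minimal cyclic coinvariant subspace $\Sc{V}_S$ to one for $(T,\tau)$: since $U$ intertwines, $U\Sc{V}_S$ is an $\F{T}^*$-invariant subspace cyclic for $\F{T}$ with no proper cyclic coinvariant subspace, so by the \emph{uniqueness} established in Lemma \ref{uniqueminimal2} we conclude $U\Sc{V}_S=\Sc{V}_T$. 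Then $U|_{\Sc{V}_S}$ is a unitary from $\Sc{V}_S$ to $\Sc{V}_T$ implementing the unitary equivalence of the two compressed representations.

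For the reverse direction, suppose $w:\Sc{V}_S\to\Sc{V}_T$ is a unitary with $wP_{\Sc{V}_S}S(\xi)|_{\Sc{V}_S}=P_{\Sc{V}_T}T(\xi)|_{\Sc{V}_T}\,w$ and $w\sigma(a)|_{\Sc{V}_S}=\tau(a)|_{\Sc{V}_T}\,w$ for all $\xi,a$. The idea is that $(S,\sigma)$ restricted to $\Sc{H}_S$ is the minimal isometric dilation of its compression $(A,\sigma):=(P_{\Sc{V}_S}S(\cdot)|_{\Sc{V}_S},\sigma(\cdot)|_{\Sc{V}_S})$, and likewise $(T,\tau)$ is the minimal isometric dilation of its compression; the unitary $w$ identifies these two finite dimensional representations. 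First I would transport the representation $(B,\tau):=(P_{\Sc{V}_T}T(\cdot)|_{\Sc{V}_T},\tau(\cdot)|_{\Sc{V}_T})$ on $\Sc{V}_T$ back to $\Sc{V}_S$ via $w$, obtaining a representation on $\Sc{V}_S$ unitarily equivalent to $(B,\tau)$ and, by the intertwining hypothesis, equal to $(A,\sigma)$. Thus the two finite dimensional representations are genuinely the same representation after the identification $w$.

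The main obstacle, and the crux of the argument, is to promote this unitary equivalence of the compressions to a unitary equivalence of their minimal isometric dilations. Here I would invoke the uniqueness clause of Theorem \ref{cstar dilation}: the minimal isometric dilation of a contractive representation is unique up to a unitary equivalence fixing the base space. Concretely, $w$ gives a unitary equivalence between $(A,\sigma)$ and $(B,\tau)$; applying the dilation to both and using uniqueness, there is a unitary $U:\Sc{H}_S\to\Sc{H}_T$ extending $w$ (i.e.\ $U|_{\Sc{V}_S}=w$) that intertwines $(S,\sigma)$ with $(T,\tau)$. The cleanest way to exhibit $U$ explicitly is through the concrete description of the dilation spaces supplied by Lemma \ref{wanderingspace}: each of $\Sc{H}_S$ and $\Sc{H}_T$ decomposes, on the orthogonal complement of the base space, as a Fock-space-type induced representation $\Sc{F}(E)\otimes\Sc{W}$ determined by the wandering space $\Sc{W}$, and $w$ induces a unitary between the respective wandering spaces. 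Since in the fully coisometric case $\Sc{W}=\tilde{S}(E\otimes\Sc{V})\ominus\Sc{V}$ is built canonically from the compressed data, the unitary $w$ propagates to a unitary between the full dilation spaces that intertwines the creation operators and the algebra actions, completing the argument.
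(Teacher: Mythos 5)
Your proposal is correct and follows essentially the same route as the paper: the forward direction uses the intertwining unitary together with uniqueness/minimality of the cyclic coinvariant subspace to get $U\Sc{V}_S=\Sc{V}_T$, and the reverse direction appeals to the uniqueness of the minimal isometric dilation (Theorem \ref{cstar dilation}), noting that $(S,\sigma)$ and $(T,\tau)$ are the minimal dilations of their compressions because $\Sc{V}_S$ and $\Sc{V}_T$ are cyclic. The extra explicit construction via Lemma \ref{wanderingspace} is not needed but does no harm.
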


  \begin{proof}
   Suppose $(S,\sigma)$ and $(T,\tau)$ are unitarily equivalent. Let $U$ be the unitary from $\Sc{H}_S$ to $\Sc{H}_T$ intertwining $(S,\sigma)$ and $(T,\tau)$. It follows that $U\Sc{V}_S$ is invariant under $T(\cdot)^*$ and is cyclic, hence $\Sc{V}_T\subseteq U\Sc{V}_S$. Similarly $\Sc{V}_S\subseteq U^*\Sc{V}_T$. It follows that $U\Sc{V}_S=\Sc{V}_T$ and $(P_{\Sc{V}_S}S(\cdot)|_{\Sc{V}_S},\sigma(\cdot)|_{\Sc{V}_S})$ and $(P_{\Sc{V}_T}T(\cdot)|_{\Sc{V}_T},\tau(\cdot)|_{\Sc{V}_T})$ are unitarily equivalent.

   Conversely, suppose that $(P_{\Sc{V}_S}S(\cdot)|_{\Sc{V}_S},\sigma(\cdot)|_{\Sc{V}_S})$ and $(P_{\Sc{V}_T}T(\cdot)|_{\Sc{V}_T},\tau(\cdot)|_{\Sc{V}_T})$ are unitarily equivalent. Then, by the uniqueness of the minimal isometric dilation, $(S,\sigma)$ and $(T,\tau)$ are unitarily equivalent.
  \end{proof}

  \section[Product Systems of $C^*$-correspondences]{\texorpdfstring{Product Systems of $C^*$-correspondences over $\mathbb{N}^k$}{Product Systems of C*-correspondences}}\label{prod sys section}
		We will now extend our results to product systems of $C^*$-correspondences. This is the analogue of multivariate operator theory, and so relies on a more sophisticated dilation theory. The key to our anaylsis will be a trick to reduce to the consideration of a certain $C^*$-correspondence contained inside our product system (Theorem \ref{rank k to rank 1}).

  \subsection{Preliminaries and Notation}
	Recall that we are restricting our attention to unital $C^*$-algebras, and we are only considering unital representations of $C^*$-algebras.

  The following description of product systems of $C^*$-corres\-pondences over $\B{N}^k$ follows that of \cite{Fowler} and \cite{Solel3}. Let $\Sc{A}$ be a unital $C^*$-algebra. A semigroup $E$ is a \emph{product system of $C^*$-correspondences over $\B{N}^k$} if there is a semigroup homomorphism $p:E\rightarrow\B{N}^k$ such that $E(\mathbf{n}):=p^{-1}(\mathbf{n})$ is a $C^*$-correspondence over $\Sc{A}$ and the map $(\xi,\eta)\in E(\mathbf{n})\times E(\mathbf{m})\rightarrow\xi\eta\in E(\mathbf{n}+\mathbf{m})$ extends to an isomorphism $t_{\mathbf{n},\mathbf{m}}$ from $E(\mathbf{n})\otimes E(\mathbf{m})$ onto $E(\mathbf{n}+\mathbf{m})$. By $E(\mathbf{0})$ we mean the $C^*$-algebra $\Sc{A}$. Letting $\mathbf{e}_1, \mathbf{e}_2,\ldots,\mathbf{e}_k$ be the standard generating set of $\B{N}^k$, we write $E_i$ for the $C^*$-correspondence $p^{-1}(\mathbf{e}_i)$. We identify $E(\mathbf{n})$ with $E_1^{n_1}\otimes\ldots\otimes E_k^{n_k}$ when $\mathbf{n}=(n_1,\ldots,n_k)$. It follows that $t_{i,j}:=t_{\mathbf{e}_i,\mathbf{e}_j}$ is an isomorphism from $E_i\otimes E_j$ to $E_j\otimes E_i$, for $i\leq j$ and $t_{j,i}=t_{i,j}^{-1}$ for $i\leq j$. We write $t_{i,i}$ for the identity on $E_i^2$. We will often suppress the isomorphism and write $E(\mathbf{n})\otimes E(\mathbf{m})=E(\mathbf{n}+\mathbf{m})$.

  If, for each $i$, $(A^{(i)},\sigma)$ is a representation of $E_i$ on a Hilbert space $\Sc{H}$ and we have the following commutation relation
  \begin{equation*}
   \tilde{A}^{(i)}(I_{E_i}\otimes\tilde{A}^{(j)})=\tilde{A}^{(j)}(I_{E_j}\otimes\tilde{A}^{(i)})(t_{i,j}\otimes I_{\Sc{H}})
  \end{equation*}
  then $(A^{(1)},\ldots,A^{(k)},\sigma)$ is a (\emph{completely contractive covariant$)$ representation of $E$ on $\Sc{H}$}. A representation $(A^{(1)},\ldots,A^{(k)},\sigma)$ is said to be \emph{isometric} (resp. \emph{fully coisometric}) if each representation $(A^{(i)},\sigma)$ is isometric (resp. fully coisometric).

  For $\mathbf{n}=(n_1,\ldots,n_k)\in\B{N}^k$ we define a map $\tilde{A}_\mathbf{n}$ from $E(\mathbf{n})\otimes\Sc{H}$ to $\Sc{H}$ by
  \begin{align*}
   \tilde{A}_\mathbf{n}&=\tilde{A}^{(1)}_{n_1}(I_{E^{n_1}_1}\otimes\tilde{A}^{(2)}_{n_2})\ldots(I_{E^{n_1}_1}\otimes\ldots\otimes I_{E^{n_{k-1}}_{k-1}}\otimes\tilde{A}^{(k)}_{n_k})
  \end{align*}
  We define a representation $(A_\mathbf{n},\sigma)$ of the $C^*$-correspondence $E(\mathbf{n})$ by letting
  \begin{equation*}
   A_\mathbf{n}(\xi)h=\tilde{A}_\mathbf{n}(\xi\otimes h)
  \end{equation*}
  for each $\xi\in E(\mathbf{n})$ and $h\in\Sc{H}$.

  A representation $(A^{(1)},\ldots,A^{(k)},\sigma)$ of $E$ is said to be \emph{doubly commuting} if it satisfies
  \begin{equation*}
   \tilde{A}^{(j)*}\tilde{A}^{(i)}=(I_{E_j}\otimes\tilde{A}^{(i)})(t_{i,j}\otimes I_{\Sc{H}})(I_{E_i}\otimes\tilde{A}^{(j)*}).
  \end{equation*}
  It has been shown in \cite{Fowler} and \cite{Solel3} that the doubly commuting condition is equivalent to what is known as \emph{Nica covariance} \cite{Nica}. It is easy to check that an isometric, fully coisometric representation is doubly commuting.

  Note that if $(A^{(1)},\ldots,A^{(k)},\sigma)$ is an isometric representation, then for $\mathbf{n}=(n_1,\ldots,n_k)$, $\mathbf{m}=(m_1,\ldots,m_k)\in\B{N}^k$ we have
  \begin{align*}
   \tilde{A}_\mathbf{m}^*\tilde{A}_\mathbf{n}&=I_{E(\mathbf{n}-(\mathbf{n}-\mathbf{m})_+)}\otimes\tilde{A}_{(\mathbf{n}-\mathbf{m})_{-}}^*\tilde{A}_{(\mathbf{n}-\mathbf{m})_+}
  \end{align*}
  where $(\mathbf{n}-\mathbf{m})_+$ is equal to $n_i-m_i$ in the $i^{th}$ coordinate if $n_i\geq m_i$ and zero in the $i^{th}$ coordinate otherwise, and $(\mathbf{n}-\mathbf{m})_{-}\in\B{N}^k$ satisfies $\mathbf{n}-\mathbf{m}=(\mathbf{n}-\mathbf{m})_+-(\mathbf{n}-\mathbf{m})_-$.

  We define the Fock space $\Sc{F}(E)$ of a product space of $C^*$-correspondences by
  \begin{equation*}
   \Sc{F}(E)=\sideset{}{^\oplus}\sum_{\mathbf{n}\in\B{N}^k}E(\mathbf{n}).
  \end{equation*}
  For more details on the construction see \cite{Fowler}. For each $\mathbf{n}$ and $\xi\in E(\mathbf{n})$ define the \emph{creation operator} $T_\xi:\Sc{F}(E)\rightarrow\Sc{F}(E)$ by
  \begin{equation*}
   T_\xi(\eta)=\xi\otimes\eta
  \end{equation*}
  for each $\eta\in\Sc{F}(E)$. The $C^*$-algebra in $\Sc{L}(\Sc{F}(E))$ generated by the creation operators is called the \emph{Toeplitz algebra} associated to $E$ and denoted $\Sc{T}(E)$. A product system $(E,\Sc{A})$ is said to have the \emph{normal ordering property} if
  \begin{equation*}
   \Sc{T}(E)=\overline{\spa}\{L(\xi)L(\eta)^*:\xi,\eta\in\cup_{\mathbf{n}\in\B{N}^k}E(\mathbf{n})\}.
  \end{equation*}

	Let $(S^{(1)},\ldots,S^{(k)},\rho)$ be a representation of a product system $(E,\Sc{A})$ on a Hilbert space $\Sc{H}$. We denote by $I$ be the identity in $\Sc{B}(H)$. We call the weak-operator topology closed algebra
	\begin{equation*}
		\F{S}=\Alg\{I,\ S^{(i)}(\xi_i),\ \rho(a):a\in\Sc{A},\ \xi_i\in E_i\text{ for } 1\leq i\overline{\leq k\}}^{\textsc{wot}}
	\end{equation*}
the \emph{unital \textsc{wot}-closed algebra generated by the representation $(S^{(1)},\ldots,S^{(k)},\rho)$.}

  \subsection{Minimal Isometric Dilations}\label{prod sys min iso dil section}
  \begin{definition}
  Let $(E,\Sc{A})$ be a product system over $\B{N}^k$ and let $(A^{(1)},\ldots,A^{(k)},\sigma)$ be a representation of $E$ on $\Sc{V}$. A representation $(S^{(1)},\ldots,S^{(k)},\rho)$ on a Hilbert space $\Sc{H}$ is a \emph{dilation} of $(A^{(1)},\ldots,A^{(k)},\sigma)$ if $\Sc{H}$ contains $\Sc{V}$ and, for each $i$, $(S^{(i)},\rho)$ dilates $(A^{(i)},\sigma)$. A dilation $(S^{(1)},\ldots,S^{(k)},\rho)$ of $(A^{(1)},\ldots,A^{(k)},\sigma)$ is an \emph{isometric} dilation if $(S^{(1)},\ldots,S^{(k)},\rho)$ is an isometric representation. A dilation $(S^{(1)},\ldots,S^{(k)},\rho)$ of $(A^{(1)},\ldots,A^{(k)},\sigma)$ on $\Sc{H}$ is \emph{minimal} if
  \begin{equation*}
   \Sc{H}=\bigvee_{\mathbf{n}\in\B{N}^k}\tilde{S}_\mathbf{n}(E(\mathbf{n})\otimes\Sc{V}).
  \end{equation*}
  \end{definition}

  Given an arbitrary representation of a product system $(E,\Sc{A})$ over $\B{N}^k$ it is not always possible to find an isometric dilation. Indeed, if $k\geq 3$ and $\Sc{A}=E=\B{C}$, then a representation of $E$ is simply $k$ commuting contractions $A_1,\ldots,A_k$. It is known that there are examples of commuting contractions which can not be dilated to commuting isometries, see e.g. \cite{Parrott}. With that said, there are a number of dilation theorems for product systems of $C^*$-correspondences. We will now review a number of these dilations results that will be useful. The susbsequent remarks may help clarify some of the distinctions.
  
  \begin{theorem}[Solel \cite{Solel2}]\label{N^2}
   Let $(E,\Sc{A})$ be a product system of $C^*$-correspondences over $\B{N}^2$. Then any representation of $E$ has an isometric dilation.
  \end{theorem}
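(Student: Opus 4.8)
The statement is the correspondence analogue of Ando's theorem, and the plan is to use the single-variable dilation theorem (Theorem \ref{cstar dilation}) as the engine, handling the second correspondence by an Ando-type coupling that is governed by the flip isomorphism $t_{1,2}\colon E_1\otimes E_2\to E_2\otimes E_1$. Write the data of the representation as the two contractions $\tilde{A}^{(1)}\colon E_1\otimes_\sigma\Sc{V}\to\Sc{V}$ and $\tilde{A}^{(2)}\colon E_2\otimes_\sigma\Sc{V}\to\Sc{V}$ subject to the commutation relation $\tilde{A}^{(1)}(I_{E_1}\otimes\tilde{A}^{(2)})=\tilde{A}^{(2)}(I_{E_2}\otimes\tilde{A}^{(1)})(t_{1,2}\otimes I_{\Sc{V}})$. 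The goal is to produce a Hilbert space $\Sc{H}\supseteq\Sc{V}$ and an isometric representation $(S^{(1)},S^{(2)},\rho)$ dilating it.

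First I would apply Theorem \ref{cstar dilation} to the single representation $(A^{(1)},\sigma)$ to obtain its minimal isometric dilation $(V^{(1)},\rho)$ on a space $\Sc{K}\supseteq\Sc{V}$. By Lemma \ref{wanderingspace} the added part $\Sc{K}\ominus\Sc{V}$ is isometrically the induced Fock module $\Sc{F}(E_1)\otimes\Sc{W}$ over a wandering space $\Sc{W}$, and on it $V^{(1)}$ acts as the left $E_1$-creation operators. The first real step is then to extend $A^{(2)}$ to a completely contractive covariant representation $(B^{(2)},\rho)$ of $E_2$ on all of $\Sc{K}$, dilating $(A^{(2)},\sigma)$, in such a way that $(V^{(1)},B^{(2)},\rho)$ is again a representation of the product system, i.e.\ satisfies the flip relation with $V^{(1)}$. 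The natural definition transports the $E_2$-action across each $E_1$-factor of the Fock module by means of $t_{1,2}$; the content is to check that this is well defined and contractive on the wandering part and that it restricts to $\tilde{A}^{(2)}$ on $\Sc{V}$. This compatibility is exactly what the original commutation relation of $(A^{(1)},A^{(2)},\sigma)$ guarantees.

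At this stage $V^{(1)}$ is isometric but $B^{(2)}$ is merely contractive, so the construction is not symmetric. To finish I would dilate $(B^{(2)},\rho)$ to its minimal isometric dilation $(S^{(2)},\rho')$ on a larger space and extend $V^{(1)}$ across this new dilation to a covariant representation $(S^{(1)},\rho')$ of $E_1$ that still satisfies the flip relation with $S^{(2)}$; because $V^{(1)}$ is already an isometry, one checks that its extension may be taken isometric. Alternating these two one-sided extensions produces an increasing chain of Hilbert spaces carrying representations whose defects decrease, and passing to the inductive limit yields a single space on which both $S^{(1)}$ and $S^{(2)}$ are isometric and the flip relation survives; discarding the part not generated from $\Sc{V}$ gives the desired (minimal) isometric dilation.

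The main obstacle is the coupling step: constructing the commuting contractive extension $B^{(2)}$ so that it respects the flip $t_{1,2}$ while still compressing to $\tilde{A}^{(2)}$ on $\Sc{V}$, and then controlling the alternation so that the flip relation is preserved in the inductive limit at the same time as both components become isometric. The isomorphism $t_{1,2}$ is precisely what prevents the two correspondences from being dilated independently, and managing its interaction with the two Fock-module structures is the analogue of the unitary coupling of defect spaces in the classical Ando argument; this is where essentially all the difficulty lies.
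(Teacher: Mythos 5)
First, a point of comparison: the paper does not prove this statement at all --- Theorem \ref{N^2} is quoted from Solel \cite{Solel2} --- so there is no internal argument to measure your outline against; it must stand on its own. It correctly identifies the Ando-type strategy, but it has a genuine gap at precisely the point you flag as ``the first real step.'' You assert that $A^{(2)}$ extends to a contractive covariant representation $B^{(2)}$ of $E_2$ on $\Sc{K}=\Sc{V}\oplus(\Sc{F}(E_1)\otimes\Sc{W})$ satisfying the flip relation with $V^{(1)}$, via a ``natural definition'' that transports the $E_2$-action across the $E_1$-factors using $t_{1,2}$. But after flipping $E_2\otimes E_1^{n}\otimes\Sc{W}$ to $E_1^{n}\otimes E_2\otimes\Sc{W}$ one still needs a map out of $E_2\otimes\Sc{W}$, i.e.\ an action of $E_2$ on the wandering space $\Sc{W}$, and no such action exists a priori: $\Sc{W}$ is a defect space for $\tilde{A}^{(1)}$, and transferring the $E_2$-action to it requires intertwining $(I-\tilde{A}^{(1)}{}^*\tilde{A}^{(1)})^{1/2}$ with $I_{E_1}\otimes\tilde{A}^{(2)}$. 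The commutation relation only supplies a norm identity (the analogue of $\|D_{T_1}T_2h\|^2+\|D_{T_2}h\|^2=\|D_{T_2}T_1h\|^2+\|D_{T_1}h\|^2$ for commuting contractions, with $D_T=(I-T^*T)^{1/2}$), which yields a partial isometry between certain pieces of defect spaces that must then be enlarged to a unitary on a bigger space. That coupling is the entire content of Ando's theorem and of Solel's proof, which carries it out explicitly on a concretely described direct sum; it cannot be relegated to ``the content is to check that this is well defined.'' Note also that the Muhly--Solel commutant lifting theorem does not apply off the shelf, since $B^{(2)}$ is not required to commute with $V^{(1)}$ but to satisfy the $t_{1,2}$-twisted relation.

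Even granting that step, the alternating scheme needs more care than ``because $V^{(1)}$ is already an isometry, one checks that its extension may be taken isometric.'' A contractive coextension of an isometry restricts to that isometry on the old space but need not be isometric on the newly added part; what makes the inductive limit work in the classical argument is that at each stage the current lift of each operator is isometric on the space from two stages earlier, so each limit operator is isometric on a dense subspace and hence everywhere. That is repairable, but it invokes the lifting step at every stage, returning you to the unproved coupling. Finally, treat your closing ``(minimal)'' with caution: as the paper notes following \cite{DavPowYang1}, minimal isometric dilations over $\B{N}^2$ need not be unique, so minimality here can only mean cutting down to the $\F{S}$-invariant subspace generated by $\Sc{V}$.
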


  \begin{definition}
   Let $(A^{(1)},\ldots,A^{(k)},\sigma)$ be a representation of a product system $(E,\Sc{A})$ on $\Sc{H}$. For each $\mathbf{n}\in\B{Z}^k$ we define $A(\mathbf{n})$ to be
	\begin{equation*}
		A(\mathbf{n})=\tilde{A}_{\mathbf{n}_-}^*\tilde{A}_{\mathbf{n}_+}.
	\end{equation*}
   Let $(S^{(1)},\ldots,S^{(k)},\rho)$ be an isometric dilation of $(A^{(1)},\ldots,A^{(k)},\sigma)$. If for each $\mathbf{n}\in\B{Z}^k$, $(S^{(1)},\ldots,S^{(k)},\rho)$ satisfies
	\begin{equation*}
		(I_{E(\mathbf{n}_+)}\otimes P_{\Sc{H}})S(\mathbf{n})|_{E(\mathbf{n}_+)\otimes\Sc{H}}=A(\mathbf{n})
	\end{equation*}
   then $(S^{(1)},\ldots,S^{(k)},\rho)$ is a \emph{regular isometric dilation} of $(A^{(1)},\ldots,A^{(k)},\sigma)$.
  \end{definition}

  \begin{theorem}[Solel \cite{Solel3}]\label{regular dilation}
   Let $(E,\Sc{A})$ be a product system of $C^*$-correspondences over $\B{N}^k$ and let $(A^{(1)},\ldots,A^{(k)},\sigma)$ be a representation of $E$. If $(A^{(1)},\ldots,A^{(k)},\sigma)$ satisfies the additional condition that, for every $v\subseteq\{1,\ldots,k\}$
   \begin{equation}\label{hasregulardilation}
    \sum_{u\subseteq v}(-1)^{|u|}(I_{\mathbf{e}(v)-\mathbf{e}(u)}\otimes\tilde{A}_{\mathbf{e}(u)}^*\tilde{A}_{\mathbf{e}(u)})\geq0,
   \end{equation}
   where $\mathbf{e}(u)\in\B{N}^k$ is $1$ in the $i^{th}$ coordinate if $i\in u$ and zero in the $i^{th}$ coordinate otherwise, then it has a unique minimal regular isometric dilation.
  \end{theorem}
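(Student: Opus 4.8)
The plan is to follow the standard route to Brehmer's theorem, adapted to the module setting. In the notation of Section 2, write $\Sc{V}$ for the space on which $(A^{(1)},\ldots,A^{(k)},\sigma)$ acts and build the dilation on a larger space $\Sc{H}\supseteq\Sc{V}$. The idea is that condition \eqref{hasregulardilation}, as $v$ ranges over $2^{\{1,\ldots,k\}}$, is exactly the family of positivity requirements which makes the operator-valued kernel assembled from the $A(\mathbf{n})$ positive definite; once positivity is in hand the dilation is produced by a Kolmogorov/Sz.-Nagy--Naimark completion. The one genuinely new feature over the scalar case of commuting contractions is that positivity must be read inside the adjointable operators $\Sc{L}(E(\mathbf{p})\otimes_\sigma\Sc{V})$ on the fibres, and that the monomials $\tilde{A}_{\mathbf{e}(u)}$ do not commute but must be normally ordered through the isomorphisms $t_{i,j}$.

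First I would record the form any regular isometric dilation is forced to take. If $(S^{(1)},\ldots,S^{(k)},\rho)$ is such a dilation then, by the isometric identity recorded earlier, $\tilde{S}_\mathbf{m}^*\tilde{S}_\mathbf{n}=I\otimes S(\mathbf{n}-\mathbf{m})$, and the defining regular identity compresses $S(\mathbf{n}-\mathbf{m})$ to $A(\mathbf{n}-\mathbf{m})$; so the inner products among the generating vectors $\tilde{S}_\mathbf{m}(\xi\otimes h)$ are completely determined by the operators $A(\mathbf{n})$. This dictates the construction. Form the algebraic direct sum $\Sc{D}=\sideset{}{^\oplus}\sum_{\mathbf{m}\in\B{N}^k}E(\mathbf{m})\otimes_\sigma\Sc{V}$ and equip it with the sesquilinear form
\begin{equation*}
 \langle\xi\otimes h,\eta\otimes g\rangle_K=\langle\xi\otimes h,(I\otimes A(\mathbf{n}-\mathbf{m}))(\eta\otimes g)\rangle,\quad \xi\otimes h\in E(\mathbf{m})\otimes\Sc{V},\ \eta\otimes g\in E(\mathbf{n})\otimes\Sc{V},
\end{equation*}
where $I\otimes A(\mathbf{n}-\mathbf{m})$ is padded by the identity on the common degree $\mathbf{m}\wedge\mathbf{n}$ so as to map $E(\mathbf{n})\otimes\Sc{V}$ into $E(\mathbf{m})\otimes\Sc{V}$.

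The crux is that $\langle\cdot,\cdot\rangle_K$ is positive semidefinite, and this is exactly where \eqref{hasregulardilation} is used. Given a finitely supported element of $\Sc{D}$, positivity of the associated double sum is established by an inclusion--exclusion (Möbius inversion) argument on the Boolean lattice $2^{\{1,\ldots,k\}}$, which reduces the estimate to the ``box'' inequalities \eqref{hasregulardilation}, one for each $v$. I would carry this out inside $\Sc{L}(E(\mathbf{p})\otimes\Sc{V})$, using the $t_{i,j}$ to reorder the mixed terms $\tilde{A}_{\mathbf{e}(u)}^*\tilde{A}_{\mathbf{e}(u)}$; this is the module analogue of Brehmer's combinatorial argument and is where essentially all the work lies. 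With positivity in hand, pass to the Hausdorff completion $\Sc{H}$ of $(\Sc{D},\langle\cdot,\cdot\rangle_K)$; the degree-$\mathbf{0}$ summand $E(\mathbf{0})\otimes_\sigma\Sc{V}=\Sc{A}\otimes_\sigma\Sc{V}\cong\Sc{V}$ embeds isometrically, giving $\Sc{V}\subseteq\Sc{H}$. Define $\rho$ from $\sigma$ and, for each $i$ and $\xi_i\in E_i$, let $S^{(i)}(\xi_i)$ be the creation operator $\zeta\otimes h\mapsto(\xi_i\otimes\zeta)\otimes h$ raising the $E_i$-degree. There remain four verifications, all routine once the kernel is understood: that each $(S^{(i)},\rho)$ is an isometric representation of $E_i$ (i.e. $\tilde{S}^{(i)}$ is an isometry); that the product-system relation $\tilde{S}^{(i)}(I_{E_i}\otimes\tilde{S}^{(j)})=\tilde{S}^{(j)}(I_{E_j}\otimes\tilde{S}^{(i)})(t_{i,j}\otimes I_{\Sc{H}})$ holds, inherited from the $t_{i,j}$ used to identify mixed degrees in $\Sc{D}$; that the regular identity holds, which is immediate from the definition of $K$ through $A(\mathbf{n}-\mathbf{m})$; and that the dilation is minimal, which is automatic since $\Sc{D}$ spans the $\tilde{S}_\mathbf{n}(E(\mathbf{n})\otimes\Sc{V})$.

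Uniqueness is then forced by minimality: in any minimal regular isometric dilation the inner products among the spanning vectors are, by the computation of the first step, completely determined by the fixed operators $A(\mathbf{n})$, so the correspondence between the spanning sets of two such dilations extends to a unitary fixing $\Sc{V}$ and intertwining the representations. I expect the positivity of the kernel to be the main obstacle: transporting Brehmer's scalar inclusion--exclusion estimate to the module setting, where the $\tilde{A}_{\mathbf{e}(u)}$ no longer commute and positivity is an operator inequality on the fibres $E(\mathbf{p})\otimes\Sc{V}$. The second delicate point is confirming that the creation operators honour the flips $t_{i,j}$, so that $\Sc{H}$ genuinely carries a representation of the product system $E$ and not merely of the individual correspondences $E_i$.
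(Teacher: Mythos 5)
You should first be aware that the paper contains no proof of this statement: Theorem \ref{regular dilation} is imported verbatim from Solel \cite{Solel3} and used as a black box. The only internal commentary is the remark following the dilation theorems, which records that Shalit \cite{Shal} gave an \emph{alternative} proof by converting a contractive representation of the product system into a semigroup of commuting contractions and then invoking dilation theorems for such semigroups. So there is no in-paper argument to compare yours against; the relevant comparison is with Solel's original argument and with Shalit's reduction.

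On its own terms, your outline follows the route of Solel's original proof, i.e.\ the module analogue of the Sz.-Nagy--Foias/Brehmer construction of regular dilations: read off the forced inner products among the vectors $\tilde{S}_{\mathbf{m}}(\xi\otimes h)$ from the regularity identity, assemble the corresponding Toeplitz-type kernel on $\sum^\oplus_{\mathbf{m}\in\B{N}^k}E(\mathbf{m})\otimes_\sigma\Sc{V}$, prove it positive semidefinite, complete, and let the $S^{(i)}$ act by creation; uniqueness then follows from minimality exactly as you say. The architecture is right and the uniqueness argument is sound. However, the one step carrying essentially all of the mathematical content --- that the $2^k$ inequalities \eqref{hasregulardilation} imply positive semidefiniteness of the kernel on arbitrary finitely supported elements --- is asserted rather than carried out (you say yourself that ``this is where essentially all the work lies''). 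Already in the scalar case this is Brehmer's genuinely nontrivial inclusion--exclusion computation, and in the module setting one must in addition verify that the normal-ordering isomorphisms $t_{i,j}$ are compatible with the padding identities $I_{E(\mathbf{m}\wedge\mathbf{n})}\otimes A(\mathbf{n}-\mathbf{m})$ so that the reordered products genuinely yield a positive operator on each fibre $E(\mathbf{p})\otimes\Sc{V}$, and that the resulting creation operators satisfy the product-system commutation relation. As written, your proposal is a correct and well-motivated plan rather than a proof; to make it complete you would either have to execute the M\"obius-inversion positivity estimate in $\Sc{L}(E(\mathbf{p})\otimes\Sc{V})$, or take Shalit's detour and deduce the theorem from known regular dilation results for semigroups of commuting contractions.
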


  \begin{theorem}[Solel \cite{Solel3}]\label{regular dilation of doubly commuting}
   Let $(E,\Sc{A})$ be a product system of $C^*$-correspondences over $\B{N}^k$ and let $(A^{(1)},\ldots,A^{(k)},\sigma)$ be a doubly commuting representation of $E$. Then $(A^{(1)},\ldots,A^{(k)},\sigma)$ will satisfy \eqref{hasregulardilation}. Further, the minimal regular isometric dilation of $(A^{(1)},\ldots,A^{(k)},\sigma)$ will be doubly commuting.
  \end{theorem}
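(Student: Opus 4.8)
The plan is to treat this as the $\B{N}^k$-correspondence analogue of Brehmer's theorem and to prove the two assertions separately. The positivity condition \eqref{hasregulardilation} should follow from a factorisation of the alternating sum into a product of commuting positive operators, exactly as the classical Brehmer inequality factors as $\prod_{i\in v}(I-T_i^*T_i)$ for a doubly commuting tuple of contractions. The claim about the dilation should then follow by invoking Theorem \ref{regular dilation} for existence and uniqueness of the minimal regular isometric dilation and verifying that it satisfies the defining relation.

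For the first assertion, fix $v\subseteq\{1,\ldots,k\}$ and work on $E(\mathbf{e}(v))\otimes\Sc{H}$. For each $i\in v$ let $B_i$ denote the operator obtained by lifting $\tilde{A}^{(i)*}\tilde{A}^{(i)}$ to this space, inserting identities on the remaining correspondence factors and reordering the $E_j$ by the appropriate isomorphisms $t_{i,j}$ so that $E_i$ sits adjacent to $\Sc{H}$. Each $\tilde{A}^{(i)}$ is a contraction, so $0\leq B_i\leq I$ and $I-B_i\geq0$. I would then establish two identities, both consequences of the doubly commuting relation $\tilde{A}^{(j)*}\tilde{A}^{(i)}=(I_{E_j}\otimes\tilde{A}^{(i)})(t_{i,j}\otimes I_{\Sc{H}})(I_{E_i}\otimes\tilde{A}^{(j)*})$: the lifted operators $B_i$ pairwise commute, and for every $u\subseteq v$ one has $\prod_{i\in u}B_i=I_{\mathbf{e}(v)-\mathbf{e}(u)}\otimes\tilde{A}_{\mathbf{e}(u)}^*\tilde{A}_{\mathbf{e}(u)}$ after the reordering identification. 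The factorisation identity is proved by peeling off one coordinate of $u$ at a time and using the relation to move an adjoint $\tilde{A}^{(j)*}$ past a factor $\tilde{A}^{(i)}$; the commutation statement is the special case showing the order of peeling is irrelevant. Granting these, the standard inclusion–exclusion collapse for commuting operators gives
\[
\sum_{u\subseteq v}(-1)^{|u|}\left(I_{\mathbf{e}(v)-\mathbf{e}(u)}\otimes\tilde{A}_{\mathbf{e}(u)}^*\tilde{A}_{\mathbf{e}(u)}\right)=\sum_{u\subseteq v}(-1)^{|u|}\prod_{i\in u}B_i=\prod_{i\in v}(I-B_i),
\]
and since the $I-B_i$ are commuting positive operators their product is positive, which is \eqref{hasregulardilation}.

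For the second assertion, the first part shows the representation satisfies \eqref{hasregulardilation}, so Theorem \ref{regular dilation} furnishes a unique minimal regular isometric dilation $(S^{(1)},\ldots,S^{(k)},\rho)$ on some $\Sc{H}\supseteq\Sc{V}$. To see it is doubly commuting I would verify $\tilde{S}^{(j)*}\tilde{S}^{(i)}=(I_{E_j}\otimes\tilde{S}^{(i)})(t_{i,j}\otimes I_{\Sc{H}})(I_{E_i}\otimes\tilde{S}^{(j)*})$ for $i<j$. Both sides are bounded maps from $E_i\otimes\Sc{H}$ to $E_j\otimes\Sc{H}$, and since $\Sc{H}=\bigvee_{\mathbf{n}}\tilde{S}_\mathbf{n}(E(\mathbf{n})\otimes\Sc{V})$ it suffices to check agreement on vectors built from $\tilde{S}_\mathbf{n}(\zeta\otimes v)$ with $v\in\Sc{V}$. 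The defining regular dilation property $(I_{E(\mathbf{n}_+)}\otimes P_{\Sc{H}})S(\mathbf{n})|_{E(\mathbf{n}_+)\otimes\Sc{H}}=A(\mathbf{n})$ reduces every pairing against $\Sc{V}$ to a moment $A(\mathbf{m})=\tilde{A}_{\mathbf{m}_-}^*\tilde{A}_{\mathbf{m}_+}$ of the base representation, and the doubly commuting relation for $(A^{(1)},\ldots,A^{(k)},\sigma)$ forces the two sides to yield equal moments. A cleaner alternative is to construct a doubly commuting isometric dilation directly, dilating the coordinates one at a time via Theorem \ref{cstar dilation} and using the base relation to extend the remaining (still contractive) coordinates compatibly at each stage; the moment computation of the first part shows this dilation is regular and minimal, and the uniqueness in Theorem \ref{regular dilation} then identifies it with $(S^{(1)},\ldots,S^{(k)},\rho)$.

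The main obstacle in both parts is identical: faithfully tracking the reordering isomorphisms $t_{i,j}$ that intertwine the various arrangements of the factors $E_1,\ldots,E_k$ around $\Sc{H}$. In the first part they must be inserted so that the lifts $B_i$ genuinely commute and the factorisation $\prod_{i\in u}B_i=I_{\mathbf{e}(v)-\mathbf{e}(u)}\otimes\tilde{A}_{\mathbf{e}(u)}^*\tilde{A}_{\mathbf{e}(u)}$ holds as an honest operator identity rather than merely up to a permutation of tensor factors; in the second the same maps appear inside the relation to be verified for the $\tilde{S}^{(i)}$. The operator-theoretic content (contractivity giving $I-B_i\geq0$, the inclusion–exclusion collapse, and uniqueness of the regular dilation) is routine once this bookkeeping is in place, so the technical heart of the argument lies in organising the tensor reordering so that the Brehmer factorisation becomes an exact operator identity.
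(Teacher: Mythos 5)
This theorem is stated in the paper as a quoted result of Solel \cite{Solel3} and no proof of it appears anywhere in the text, so there is no internal argument to compare your proposal against. Your plan --- the Brehmer-style factorisation $\sum_{u\subseteq v}(-1)^{|u|}\prod_{i\in u}B_i=\prod_{i\in v}(I-B_i)$ into commuting positive factors, with the key identity $B_iB_j=B_jB_i=\tilde{A}_{\mathbf{e}_i+\mathbf{e}_j}^*\tilde{A}_{\mathbf{e}_i+\mathbf{e}_j}$ obtained by writing $\tilde{A}_{\mathbf{e}_i+\mathbf{e}_j}$ in its two factored forms and applying the doubly commuting relation once, followed by a moment computation over $\Sc{V}$ to verify the Nica-covariance of the minimal regular dilation --- is the standard argument and is essentially the one in the cited source; the sketch is sound.
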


  \begin{theorem}[Shalit \cite{Shal2}]\label{coisometric dilations}
   Let $(E,\Sc{A})$ be a product system of $C^*$-correspondences over $\B{N}^k$ and let $(A^{(1)},\ldots,A^{(k)},\sigma)$ be a fully coisometric representation of $E$. Then $(A^{(1)},\ldots,A^{(k)},\sigma)$ has a minimal isometric dilation which is fully coisometric.
  \end{theorem}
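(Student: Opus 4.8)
The plan is to combine the existing dilation theory with a coordinatewise version of the coisometry argument already used in Lemma \ref{coiso}. Since $(A^{(1)},\ldots,A^{(k)},\sigma)$ is fully coisometric, it is doubly commuting, so Theorem \ref{regular dilation of doubly commuting} supplies a minimal (regular) isometric dilation $(S^{(1)},\ldots,S^{(k)},\rho)$ on a Hilbert space $\Sc{H}\supseteq\Sc{V}$, itself doubly commuting. As the statement only asserts existence, it then suffices to show that this particular dilation is fully coisometric, i.e.\ that each $\tilde{S}^{(i)}$ is a coisometry. Writing $\Sc{M}_i=(I-\tilde{S}^{(i)}\tilde{S}^{(i)*})\Sc{H}=\ker\tilde{S}^{(i)*}$, the goal reduces to proving $\Sc{M}_i=\{0\}$ for each $i$.

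First I would reproduce the single-variable argument in each coordinate. For $v\in\Sc{V}$ the dilation relations give $(I_{E_i}\otimes P_{\Sc{V}})\tilde{S}^{(i)*}v=\tilde{A}^{(i)*}v$; since $\tilde{S}^{(i)*}$ is an isometry and $\tilde{A}^{(i)}$ a coisometry, $\|\tilde{S}^{(i)*}v\|=\|\tilde{A}^{(i)*}v\|=\|v\|$ forces $\tilde{S}^{(i)*}v=\tilde{A}^{(i)*}v\in E_i\otimes\Sc{V}$, whence $\tilde{S}^{(i)}\tilde{S}^{(i)*}v=v$ and $\Sc{M}_i\perp\Sc{V}$. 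The covariance identity shows $\rho(a)$ commutes with $\tilde{S}^{(i)}\tilde{S}^{(i)*}$, so $\Sc{M}_i$ reduces $\rho$; and $S^{(i)}(\xi)^*$ annihilates $\Sc{M}_i=\ker\tilde{S}^{(i)*}$. If I can further show $\Sc{M}_i$ is invariant under $S^{(j)}(\eta)^*$ for $j\neq i$, then $\Sc{M}_i$ is an $\F{S}^*$-invariant subspace orthogonal to $\Sc{V}$, and the standard minimality computation $\<m,\tilde{S}_\mathbf{n}(\xi\otimes v)\>=\<\tilde{S}_\mathbf{n}^*m,\xi\otimes v\>$, with $\tilde{S}_\mathbf{n}^*m\in E(\mathbf{n})\otimes\Sc{M}_i$ orthogonal to $E(\mathbf{n})\otimes\Sc{V}$, forces $\Sc{M}_i=\{0\}$.

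The crux is this cross-coordinate invariance, the only genuinely multivariable point. I would establish the equivalent statement that $\mathrm{Ran}\,\tilde{S}^{(i)}$ is invariant under $S^{(j)}(\eta)$, using only the product-system commutation relation $\tilde{S}^{(i)}(I_{E_i}\otimes\tilde{S}^{(j)})=\tilde{S}^{(j)}(I_{E_j}\otimes\tilde{S}^{(i)})(t_{i,j}\otimes I_{\Sc{H}})$. On a spanning vector $S^{(i)}(\zeta)h$, rewriting $S^{(j)}(\eta)S^{(i)}(\zeta)h$ through this relation reorders the two creation operators and expresses it as $\tilde{S}^{(i)}\bigl(\sum_l\zeta'_l\otimes S^{(j)}(\eta'_l)h\bigr)\in\mathrm{Ran}\,\tilde{S}^{(i)}$, where $\sum_l\zeta'_l\otimes\eta'_l=t_{j,i}(\eta\otimes\zeta)$; passing to orthogonal complements yields $S^{(j)}(\eta)^*\Sc{M}_i\subseteq\Sc{M}_i$. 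A short supplementary computation, using that $\Sc{M}_i$ reduces $\rho$ and hence that $(\eta^*\otimes I)$ commutes with $I_{E_j}\otimes P_{\Sc{M}_i^\perp}$, upgrades this to $\tilde{S}^{(j)*}\Sc{M}_i\subseteq E_j\otimes\Sc{M}_i$, which is exactly what the minimality step requires.

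I expect the main obstacle to be precisely this reordering: carrying the isomorphisms $t_{i,j}$ correctly through the $\tilde{}$ operators and verifying that the bare commutation relation of the representation — rather than the stronger doubly commuting condition — already delivers the invariance of $\mathrm{Ran}\,\tilde{S}^{(i)}$. Once that is secured, the remaining arguments are the coordinatewise analogues of Lemma \ref{coiso} together with the minimality of the dilation.
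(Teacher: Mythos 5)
Your opening step is where the argument breaks down. You assert that a fully coisometric representation is automatically doubly commuting and then invoke Theorem \ref{regular dilation of doubly commuting} to produce the minimal isometric dilation. But only \emph{isometric}, fully coisometric representations are doubly commuting; a fully coisometric representation that is not isometric need not be. Example \ref{not doubly commuting} exhibits a finite dimensional, fully coisometric representation of a single-vertex $2$-graph which is not doubly commuting, and the remark following Theorem \ref{SkalskiDil} notes that a fully coisometric representation need not even satisfy the positivity condition \eqref{hasregulardilation} required for a regular isometric dilation (take $T_1=T_2=S^*$ for a unilateral shift $S$). So the existence of a minimal isometric dilation --- which is the substantive content of the theorem, especially for $k\geq3$ where Parrott-type examples show that arbitrary representations need not dilate at all --- cannot be obtained by this route. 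The statement is quoted from Shalit \cite{Shal2}, where existence is proved by converting the representation into a semigroup of commuting contractions (a ${\rm CP}_0$-semigroup) and appealing to dilation theory for such semigroups; the present paper gives no proof and does not need to.

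The second half of your argument --- that any minimal isometric dilation of a fully coisometric representation is itself fully coisometric --- is sound, and it is essentially the proof of Lemma \ref{coiso2}: one shows that $\Sc{M}_i=\ker\tilde{S}^{(i)*}$ is $\F{S}^*$-invariant and orthogonal to $\Sc{V}$ using only the commutation relation (not double commutativity), and minimality forces $\Sc{M}_i=\{0\}$. But this only upgrades an already existing minimal isometric dilation to a fully coisometric one; it does not supply the dilation in the first place, which is the actual content of the theorem.
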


  \begin{definition}
   Let $(E,\Sc{A})$ be product system of $C^*$-correspondences over $\B{N}^k$. For a representation $(A^{(1)},\ldots,A^{(k)},\sigma)$ of $E$ on a Hilbert space $\Sc{H}$ define the \emph{defect operator} for $s\in(0,1)$
   \begin{equation*}
    \Delta_s=\sum_{\substack{\mathbf{n}\in\B{N}^k\\\mathbf{n}\leq(1,1,\ldots,1)}}(-s^2)^{(|\mathbf{n}|)}\tilde{A}(\mathbf{n})\tilde{A}(\mathbf{n})^*,
   \end{equation*}
   where $|\mathbf{n}|=n_1+\ldots+n_k$ when $\mathbf{n}=(n_1,n_2,\ldots,n_k)$.

   The representation $(A^{(1)},\ldots,A^{(k)},\sigma)$ is said to satisfy \emph{the Popescu condition} if there is a $t\in(0,1)$ such that $\Delta_s$ is positive for all $s\in(t,1)$.
  \end{definition}

  \begin{theorem}[Skalski \cite{Skalski}]\label{SkalskiDil}
   Let $(E,\Sc{A})$ be a product system of $C^*$-correspondences over $\B{N}^k$ having the normal ordering property. Let $(A^{(1)},\ldots,A^{(k)},\sigma)$ be a representation of $E$. If $(A^{(1)},\ldots,A^{(k)},\sigma)$ satisfies the Popescu condition then it has an isometric dilation.
  \end{theorem}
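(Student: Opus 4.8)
The plan is to realise the isometric dilation through a rescaled Poisson transform together with a compactness argument at the level of completely positive maps, in the spirit of Popescu's dilation of row contractions and Brehmer's theorem. The normal ordering property is the structural input that makes this feasible: it identifies $\Sc{T}(E)$ with $\overline{\spa}\{L(\xi)L(\eta)^*\}$, so that an isometric dilation of $(A^{(1)},\ldots,A^{(k)},\sigma)$ is the same thing as a unital $*$-representation $\pi$ of $\Sc{T}(E)$ on some $\Sc{K}\supseteq\Sc{H}$ for which $\Sc{H}$ is coinvariant under each $\pi(L(\xi))$ and $P_\Sc{H}\pi(L(\xi))|_\Sc{H}=A_\mathbf{n}(\xi)$ for $\xi\in E(\mathbf{n})$. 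By the standard dilation lemma for a unital completely positive map whose restriction to a subalgebra is multiplicative, it therefore suffices to produce a unital completely positive map $\Phi:\Sc{T}(E)\to\Sc{B}(\Sc{H})$ whose restriction to the tensor algebra $\Sc{T}_+(E)$ is multiplicative and which satisfies $\Phi(L(\xi))=A_\mathbf{n}(\xi)$; the minimal Stinespring dilation of such a $\Phi$ is then the desired dilation, with $\Sc{H}$ coinvariant exactly because $\Phi|_{\Sc{T}_+(E)}$ is a homomorphism.

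First I would fix $s\in(t,1)$, so that the Popescu condition guarantees $\Delta_s\geq 0$, and set $\Sc{D}_s=\overline{\Delta_s^{1/2}\Sc{H}}$. Using $\Delta_s^{1/2}$ I would build a Poisson kernel $\Pi_s:\Sc{H}\to\Sc{F}(E)\otimes\Sc{D}_s$ whose component into $E(\mathbf{n})\otimes\Sc{D}_s$ sends $h$ to $s^{|\mathbf{n}|}$ times the canonical image of $\Delta_s^{1/2}\tilde{A}_\mathbf{n}^* h$. Positivity of $\Delta_s$ is what allows the square root to be formed, and the Gram computation
\begin{equation*}
 \Pi_s^*\Pi_s=\sum_{\mathbf{n}\in\B{N}^k}s^{2|\mathbf{n}|}\,\tilde{A}_\mathbf{n}\,(I\otimes\Delta_s)\,\tilde{A}_\mathbf{n}^*
\end{equation*}
collapses by inclusion--exclusion: substituting the alternating sum defining $\Delta_s$ produces a telescoping series which, because $s<1$ forces $s^{2|\mathbf{n}|}\|\tilde{A}_\mathbf{n}\tilde{A}_\mathbf{n}^*\|\to 0$, sums to $I_\Sc{H}$. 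Thus each $\Pi_s$ is an isometry, and the Poisson intertwining relation $\Pi_s\,S^{(i)}(\cdot)^*=(V^{(i)}(\cdot)^*\otimes I)\Pi_s$ shows that $\Pi_s\Sc{H}$ is coinvariant for the $*$-representation $\pi_V$ of $\Sc{T}(E)$ determined by the induced (Fock) isometric representation $(V,\rho_s)$ on $\Sc{F}(E)\otimes\Sc{D}_s$, i.e. by $\pi_V(L(\xi))=V(\xi)$. Consequently $\Phi_s(x):=\Pi_s^*\,\pi_V(x)\,\Pi_s$ is a unital completely positive map on $\Sc{T}(E)$ whose restriction to $\Sc{T}_+(E)$ is multiplicative and with $\Phi_s(L(\xi))=s\,A^{(i)}(\xi)$ for $\xi\in E_i$.

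It then remains to pass to the limit $s\to 1$. Since the $\Phi_s$ are uniformly bounded unital completely positive maps, point--weak-$*$ compactness of the set of such maps yields a cluster point $\Phi$ as $s\to1$; this $\Phi$ is again unital completely positive, satisfies $\Phi(L(\xi))=A_\mathbf{n}(\xi)$, and its multiplicativity on $\Sc{T}_+(E)$ survives the limit because it is an identity among finitely many evaluations of $\Phi_s$. Applying the Stinespring step to $\Phi$ and using the multiplicativity of $\Phi|_{\Sc{T}_+(E)}$ to conclude coinvariance of $\Sc{H}$ then produces the isometric dilation.

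The hard part will be this limiting procedure, and it is the reason the hypothesis is phrased in terms of $\Delta_s$ over an interval rather than a single inequality. The kernels $\Pi_s$ need not converge in any operator topology as $s\to1$, so one cannot simply set $s=1$; indeed at $s=1$ the telescoping series defining $\Pi_1^*\Pi_1$ generally fails to converge. The argument must therefore be conducted on the completely positive maps $\Phi_s$, where weak-$*$ compactness is available, and one must verify that every defining relation of a dilation, namely isometry of each $\tilde{S}^{(i)}$, the compression identity $P_\Sc{H}\pi(L(\xi))|_\Sc{H}=A_\mathbf{n}(\xi)$, and coinvariance of $\Sc{H}$, is preserved by the cluster point. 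A secondary technical point, feeding the Gram computation, is to confirm that the inclusion--exclusion collapse is genuinely governed by $\Delta_s$ for a whole left-neighbourhood of $1$; this is exactly what the Popescu condition supplies, and it is what distinguishes Theorem \ref{SkalskiDil} from the regular-dilation result of Theorem \ref{regular dilation}, whose hypothesis \eqref{hasregulardilation} is the limiting case $s=1$.
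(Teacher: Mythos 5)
This is a result the paper imports from \cite{Skalski} without proof, so there is no in-paper argument to compare against; your proposal does, however, correctly reconstruct the strategy of the cited source: a rescaled Poisson kernel $\Pi_s$ built from $\Delta_s^{1/2}$, the inclusion--exclusion collapse of $\Pi_s^*\Pi_s$ (which is absolutely convergent for $s<1$ since $\|\tilde{A}_{\mathbf{n}}\|\leq 1$), a point--weak-$*$ cluster point of the UCP maps $\Phi_s$ as $s\to 1$, and a Stinespring dilation, with the normal ordering property guaranteeing that $\Phi$ is determined and controlled on the dense span of the $L(\xi)L(\eta)^*$. One small imprecision: a UCP map whose restriction to the subalgebra $\Sc{T}_+(E)$ is a homomorphism yields (by the Sarason-type lemma) only that $\Sc{H}$ is \emph{semi-invariant} for the Stinespring representation, not coinvariant; to obtain condition (ii) of the definition of dilation you must cut the Stinespring space down to the invariant subspace generated by $\Sc{H}$ under $\pi(\Sc{T}_+(E))$ and $\pi(\varphi_\infty(\Sc{A}))$, checking that the restricted representation is still isometric --- a standard but necessary extra step.
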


  \begin{remark}[Remarks on Theorems \ref{N^2}, \ref{regular dilation}, \ref{coisometric dilations} and \ref{SkalskiDil}]
   The dilation given in Theorem \ref{N^2} is not necessarily unique. Examples of representations which do not dilate uniquely are given by Davidson, Power and Yang in \cite{DavPowYang1}. They also provide an alternative proof of Theorem \ref{N^2} for the case that $\Sc{A}=\B{C}$ and $E_i=\B{C}^{n_i}$ for $i=1,2$. Further it is proved that in this setting a minimal isometric dilation of a fully coisometric representation is fully coisometric and unique.

   A fully coisometric representation does not necessarily satisfy \eqref{hasregulardilation}. For example if $T_1=T_2=S^*$, where $S$ is a unilateral shift on a separable Hilbert space, then the commuting coisometries $T_1$ and $T_2$ do no satisfy \eqref{hasregulardilation}. The atomic representations of single vertex $k$-graphs studied in \cite{DavPowYang2, DavYang2} do satisfy \eqref{hasregulardilation} since they are doubly commuting. For another example of a non-doubly commuting, fully coisometric representation see Example \ref{not doubly commuting}.

   An alternative proof of Theorem \ref{regular dilation} was given by Shalit in \cite{Shal}. The method of proof in \cite{Shal} and \cite{Shal2} is to construct a semigroup of commuting contractions from a contractive representation. The result is then deduced from dilation results for semigroups of commuting contractions.

   Skalski and Zacharias \cite{SkalZach1} show that if $(A^{(1)},\ldots,A^{(k)},\sigma)$ is a doubly commuting representation of $E$ then its minimal isometric dilation is fully coisometric if and only if $(A^{(1)},\ldots,A^{(k)},\sigma)$ is fully coisometric. We will show in Lemma \ref{coiso2} that a minimal, isometric dilation of a representation $(A^{(1)},\ldots,A^{(k)},\sigma)$ is fully coisometric if and only if $(A^{(1)},\ldots,A^{(k)},\sigma)$ is fully coisometric, without the assumption that $(A^{(1)},\ldots,A^{(k)},\sigma)$ is doubly commuting.
  
   It is noted in \cite{Skalski} that if a representation $(A^{(1)},\ldots,A^{(k)},\sigma)$ is doubly commuting or coisometric then it will satisfy the Popescu condition. Theorem \ref{SkalskiDil} is a more general version of a dilation theorem for $k$-graphs proved by Skalski and Zacharias in \cite{SkalZach2}. We will look more closely at $k$-graphs in \S \ref{examples}.
  \end{remark}

  The following result is just a higher-rank version of Lemma \ref{coiso} and follows much the same argument.

  \begin{lemma}\label{coiso2}Let  $(A^{(1)},\ldots,A^{(k)},\sigma)$ be a representation of a product system $E$ on a Hilbert space $\Sc{V}$ with a minimal isometric dilation $(S^{(1)},\ldots,S^{(k)},\rho)$ on a Hilbert space $\Sc{H}$. Then $(S^{(1)},\ldots,S^{(k)},\rho)$ is fully coisometric if and only if $(A^{(1)},\ldots,A^{(k)},\sigma)$ is fully coisometric.
  \end{lemma}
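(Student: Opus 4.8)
The plan is to run the proof of Lemma \ref{coiso} one coordinate at a time, using that $(S^{(1)},\ldots,S^{(k)},\rho)$ is fully coisometric exactly when each $\tilde{S}^{(i)}$ is a coisometry, and similarly for $(A^{(1)},\ldots,A^{(k)},\sigma)$ and the $\tilde{A}^{(i)}$. Throughout I would use the two standard consequences of $(S^{(i)},\rho)$ dilating $(A^{(i)},\sigma)$: that $\tilde{A}^{(i)}=P_{\Sc{V}}\tilde{S}^{(i)}|_{E_i\otimes\Sc{V}}$, and (since $\Sc{V}^{\perp}$ is $S^{(i)}(\xi)$-invariant) that $(\tilde{A}^{(i)})^{*}=(\tilde{S}^{(i)})^{*}|_{\Sc{V}}$ carries $\Sc{V}$ into $E_i\otimes\Sc{V}$. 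The forward direction is then immediate and needs no minimality: if each $\tilde{S}^{(i)}$ is a coisometry, then for $v\in\Sc{V}$ we get $\tilde{A}^{(i)}(\tilde{A}^{(i)})^{*}v=P_{\Sc{V}}\tilde{S}^{(i)}(\tilde{S}^{(i)})^{*}v=P_{\Sc{V}}v=v$, so each $\tilde{A}^{(i)}$ is a coisometry.

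For the converse, assume each $\tilde{A}^{(i)}$ is a coisometry and fix $i$. Following Lemma \ref{coiso}, I would set $\Sc{M}_i=(I-\tilde{S}^{(i)}(\tilde{S}^{(i)})^{*})\Sc{H}$, the orthogonal complement of the (closed) range of the isometry $\tilde{S}^{(i)}$, and aim to show $\Sc{M}_i=\{0\}$. First, $\tilde{A}^{(i)}(\tilde{A}^{(i)})^{*}=I_{\Sc{V}}$ gives $P_{\Sc{V}}\tilde{S}^{(i)}(\tilde{S}^{(i)})^{*}|_{\Sc{V}}=I_{\Sc{V}}$; since $\tilde{S}^{(i)}(\tilde{S}^{(i)})^{*}$ is a projection this forces $\Sc{V}\subseteq\operatorname{Ran}\tilde{S}^{(i)}$, so $\Sc{M}_i\perp\Sc{V}$. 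Next I would argue that $\Sc{M}_i$ is $\F{S}^{*}$-invariant, equivalently that $\operatorname{Ran}\tilde{S}^{(i)}$ is $\F{S}$-invariant. Invariance under $\rho(a)$ follows from $\rho(a)S^{(i)}(\xi)=S^{(i)}(a\xi)$, and invariance under $S^{(i)}$ is clear. The minimality condition $\Sc{H}=\bigvee_{\mathbf{n}\in\B{N}^k}\tilde{S}_{\mathbf{n}}(E(\mathbf{n})\otimes\Sc{V})$, which in particular gives $\F{S}[\Sc{V}]=\Sc{H}$, then rules out any non-zero $\F{S}^{*}$-invariant subspace orthogonal to $\Sc{V}$, exactly as in Lemma \ref{coiso}, giving $\Sc{M}_i=\{0\}$; as $i$ was arbitrary, $(S^{(1)},\ldots,S^{(k)},\rho)$ is fully coisometric.

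The one genuinely new point, which I expect to be the main obstacle, is invariance of $\operatorname{Ran}\tilde{S}^{(i)}$ under $S^{(j)}$ for $j\neq i$; in the rank-one Lemma \ref{coiso} there is only one creation operator, so this issue does not arise. Here I would invoke the commutation relation, rewritten as $\tilde{S}^{(j)}(I_{E_j}\otimes\tilde{S}^{(i)})=\tilde{S}^{(i)}(I_{E_i}\otimes\tilde{S}^{(j)})(t_{j,i}\otimes I_{\Sc{H}})$. Applied to a spanning vector $S^{(j)}(\eta)S^{(i)}(\xi)h$ of $S^{(j)}\operatorname{Ran}\tilde{S}^{(i)}$ (with $\eta\in E_j$, $\xi\in E_i$), the right-hand side rewrites it as a sum (or limit of sums) of vectors $S^{(i)}(\zeta')S^{(j)}(\zeta'')h$ with $\zeta'\otimes\zeta''$ coming from $t_{j,i}(\eta\otimes\xi)\in E_i\otimes E_j$, each of which lies in $\operatorname{Ran}\tilde{S}^{(i)}$. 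This yields $S^{(j)}\operatorname{Ran}\tilde{S}^{(i)}\subseteq\operatorname{Ran}\tilde{S}^{(i)}$ and completes the $\F{S}$-invariance needed above.
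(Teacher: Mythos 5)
Your proposal is correct and follows essentially the same route as the paper: the paper also reduces to showing that the complement of $\operatorname{Ran}\tilde{S}^{(i)}$ is a $\F{S}^*$-invariant subspace orthogonal to $\Sc{V}$ (hence zero by minimality), and it handles the cross terms $S^{(j)}(\eta)^*$, $j\neq i$, by exactly the commutation-relation computation you describe, just written in the adjoint form $\langle S^{(j)}(\eta)^*x, S^{(i)}(\xi)y\rangle=0$. The only cosmetic difference is that the paper fixes $i=1$ and says the other coordinates follow similarly, while you treat a general $i$ from the start.
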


  \begin{proof}
   That $(A^{(1)},\ldots,A^{(k)},\sigma)$ is fully coisometric when $(S^{(1)},\ldots,S^{(k)},\rho)$ is follows the same argument as in Lemma \ref{coiso}.

   Conversely, assume that $(A^{(1)},\ldots,A^{(k)},\sigma)$ is fully coisometric. We will show that $\tilde{S}:=\tilde{S}_1$ is a coisometry. That $\tilde{S}_i$ is a coisometry, for $2\leq i\leq k$, follows similarly. Note that $\tilde{S}$ is an isometry and so $\tilde{S}\tilde{S}^*$ is a projection on $\Sc{H}$. Let $\Sc{M}=(I-\tilde{S}\tilde{S}^*)\Sc{H}$. Take any $x\in\Sc{M}$ and $y\in\Sc{H}$. We have
  \begin{equation*}
   \<S(\xi_1)^*x,S(\xi_2)y\>=\<x,\tilde{S}(\xi_1\otimes S(\xi_2)y)\>=0
  \end{equation*}
  for all $\xi_1, \xi_2\in E_1$. For $2\leq i\leq k$ we have
  \begin{align*}
   \<S^{(i)}(\eta)^*x,S(\xi)y\>&=\<x,S^{(i)}(\eta)S(\xi)y\>\\
   &=\<x,\tilde{S}^{(i)}(I_{E_i}\otimes\tilde{S})(\eta\otimes\xi\otimes y)\>\\
   &=\<x,\tilde{S}(I_E\otimes\tilde{S}^{(i)})\circ(t\otimes I_\Sc{H})(\eta\otimes\xi\otimes y)\>\\
   &=0
  \end{align*}
  for all $\xi\in E$ and $\eta\in E_i$ (where $t=t_{1,i}$). It follows that $\Sc{M}$ is $\F{S}^*$-invariant, where $\F{S}$ is the unital \textsc{wot}-closed algebra generated by $(S^{(1)},\ldots,S^{(k)},\rho)$.
  The rest of the proof follows the same argument as Lemma \ref{coiso}.
  \end{proof}

  \begin{lemma}\label{fullycoiso uniq dil}
   Let $(A^{(1)},\ldots,A^{(k)},\sigma)$ be a fully coisometric representation of a product system $E$. Then the minimal isometric dilation of $(A^{(1)},\ldots,A^{(k)},\sigma)$ is unique up to unitary equivalence.
  \end{lemma}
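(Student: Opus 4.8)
The existence of a fully coisometric minimal isometric dilation is guaranteed by Theorem \ref{coisometric dilations}, so it remains to establish uniqueness. Let $(S^{(1)},\ldots,S^{(k)},\rho)$ on $\Sc{H}$ and $(T^{(1)},\ldots,T^{(k)},\tau)$ on $\Sc{K}$ be two minimal isometric dilations of $(A^{(1)},\ldots,A^{(k)},\sigma)$ on $\Sc{V}$. By Lemma \ref{coiso2} both dilations are fully coisometric, and being isometric and fully coisometric they are both doubly commuting. The plan is to show that all inner products of the spanning vectors $\tilde{S}_{\mathbf{n}}(\xi\otimes v)$ (with $\xi\in E(\mathbf{n})$, $v\in\Sc{V}$) are determined by the base representation $(A^{(1)},\ldots,A^{(k)},\sigma)$ alone; the map $U\colon\tilde{S}_{\mathbf{n}}(\xi\otimes v)\mapsto\tilde{T}_{\mathbf{n}}(\xi\otimes v)$ will then extend to a unitary of $\Sc{H}$ onto $\Sc{K}$ (using minimality for density and surjectivity) that fixes $\Sc{V}$ and intertwines the two representations.

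The key technical step, and the one place full coisometry is essential, is the identity
\begin{equation*}
 \tilde{S}_{\mathbf{p}}^*v=\tilde{A}_{\mathbf{p}}^*v\in E(\mathbf{p})\otimes\Sc{V}\qquad(v\in\Sc{V},\ \mathbf{p}\in\B{N}^k).
\end{equation*}
To see this, first note that iterating the dilation conditions (using that $\Sc{V}^\perp$ is invariant under each $S^{(i)}(\xi)$) gives $P_{\Sc{V}}\tilde{S}_{\mathbf{p}}|_{E(\mathbf{p})\otimes\Sc{V}}=\tilde{A}_{\mathbf{p}}$, and hence $(I\otimes P_{\Sc{V}})\tilde{S}_{\mathbf{p}}^*v=\tilde{A}_{\mathbf{p}}^*v$ for $v\in\Sc{V}$. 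Since $(A^{(i)},\sigma)$ and $(S^{(i)},\rho)$ are fully coisometric, the maps $\tilde{A}_{\mathbf{p}}$ and $\tilde{S}_{\mathbf{p}}$ are coisometries, so $\|\tilde{S}_{\mathbf{p}}^*v\|=\|v\|=\|\tilde{A}_{\mathbf{p}}^*v\|$. Comparing norms forces $\tilde{S}_{\mathbf{p}}^*v$ to lie in $E(\mathbf{p})\otimes\Sc{V}$ and to equal $\tilde{A}_{\mathbf{p}}^*v$.

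With this in hand I would compute the inner products. For $\mathbf{n},\mathbf{m}\in\B{N}^k$ write $\mathbf{q}=(\mathbf{n}-\mathbf{m})_+$ and $\mathbf{p}=(\mathbf{n}-\mathbf{m})_-$, which have disjoint supports; the isometric identity $\tilde{S}_{\mathbf{m}}^*\tilde{S}_{\mathbf{n}}=I_{E(\mathbf{n}\wedge\mathbf{m})}\otimes\tilde{S}_{\mathbf{p}}^*\tilde{S}_{\mathbf{q}}$ reduces everything to understanding $\tilde{S}_{\mathbf{p}}^*\tilde{S}_{\mathbf{q}}$ on vectors from $\Sc{V}$. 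Iterating the doubly commuting relation yields
\begin{equation*}
 \tilde{S}_{\mathbf{p}}^*\tilde{S}_{\mathbf{q}}=(I_{E(\mathbf{p})}\otimes\tilde{S}_{\mathbf{q}})(t\otimes I_{\Sc{H}})(I_{E(\mathbf{q})}\otimes\tilde{S}_{\mathbf{p}}^*),
\end{equation*}
where $t$ is the flip isomorphism from $E(\mathbf{q})\otimes E(\mathbf{p})$ to $E(\mathbf{p})\otimes E(\mathbf{q})$. Applying the identity of the previous paragraph to replace $\tilde{S}_{\mathbf{p}}^*$ by $\tilde{A}_{\mathbf{p}}^*$ on $\Sc{V}$, the image lands in $E(\mathbf{p})\otimes E(\mathbf{q})\otimes\Sc{V}$ after the flip, and $I_{E(\mathbf{p})}\otimes\tilde{S}_{\mathbf{q}}$ then acts only on the $E(\mathbf{q})\otimes\Sc{V}$ legs. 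Pairing the result against $\eta\otimes w$ with $\eta\in E(\mathbf{p})$ and $w\in\Sc{V}$ leaves terms that pair $\tilde{S}_{\mathbf{q}}$ applied to vectors of $E(\mathbf{q})\otimes\Sc{V}$ against elements of the form $\sigma(\langle\,\cdot\,,\eta\rangle)w$; since these lie in $\Sc{V}$, one may insert $P_{\Sc{V}}$ and use $P_{\Sc{V}}\tilde{S}_{\mathbf{q}}|_{E(\mathbf{q})\otimes\Sc{V}}=\tilde{A}_{\mathbf{q}}$. Every such inner product is thereby expressed purely in terms of $A^{(1)},\ldots,A^{(k)},\sigma$ and the isomorphisms $t_{i,j}$, so the dilation $(T^{(1)},\ldots,T^{(k)},\tau)$ yields the identical value. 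This shows $U$ is a well-defined isometry, and minimality of both dilations makes it a unitary implementing the desired equivalence.

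I expect the main obstacle to be the bookkeeping in the last paragraph: correctly iterating the doubly commuting relation for multi-indices of disjoint support and tracking the flip isomorphisms $t_{i,j}$ so that the compression by $P_{\Sc{V}}$ can be inserted to recover $\tilde{A}_{\mathbf{q}}$. The identification $\tilde{S}_{\mathbf{p}}^*v=\tilde{A}_{\mathbf{p}}^*v$ is where full coisometry does the real work, and it is exactly the ingredient that fails in the non-fully-coisometric setting, which explains why uniqueness of the dilation can break down there.
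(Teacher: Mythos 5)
Your proof is correct, and it diverges from the paper's at the decisive step. The paper's own argument is short: after securing existence from Theorem \ref{coisometric dilations}, it invokes Lemma \ref{coiso2} to conclude that every minimal isometric dilation of a fully coisometric representation is itself fully coisometric, hence doubly commuting, and then cites Skalski's uniqueness theorem for doubly commuting minimal isometric dilations (\cite[Theorem 2.7]{Skalski}) as a black box. You follow the same first two steps (existence, plus Lemma \ref{coiso2} to get double commutation), but instead of citing Skalski you prove uniqueness directly by showing that the Gram matrix of the spanning vectors $\tilde{S}_{\mathbf{n}}(\xi\otimes v)$ is determined by $(A^{(1)},\ldots,A^{(k)},\sigma)$ alone. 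Your key identity $\tilde{S}_{\mathbf{p}}^*v=\tilde{A}_{\mathbf{p}}^*v$ is sound --- it is the multi-index version of the observation $\Sc{V}\subseteq\tilde{S}(E\otimes\Sc{V})$ made in the remark following Lemma \ref{wanderingspace} --- and the reduction of $\tilde{S}_{\mathbf{m}}^*\tilde{S}_{\mathbf{n}}$ to disjointly supported $\mathbf{p},\mathbf{q}$ uses only that $\tilde{S}_{\mathbf{n}\wedge\mathbf{m}}$ is an isometry, consistent with the identity the paper records for isometric representations. What your route buys is a self-contained proof tailored to the fully coisometric case that isolates exactly where full coisometry enters; what it costs is the multi-index bookkeeping for the iterated doubly commuting relation, which you correctly flag but do not carry out in detail. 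Note that Skalski's theorem covers doubly commuting minimal dilations without any coisometry hypothesis, so the paper's citation is strictly more general at that step; your argument would not extend to that setting, since without full coisometry $\tilde{S}_{\mathbf{p}}^*v$ need not lie in $E(\mathbf{p})\otimes\Sc{V}$, and this is precisely the phenomenon you correctly identify as the source of non-uniqueness for non-fully-coisometric dilations.
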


  \begin{proof}
   Since $(A^{(1)},\ldots,A^{(k)},\sigma)$ is fully coisometric it can be dilated by Theorem \ref{coisometric dilations}. It follows from \cite[Theorem 2.7]{Skalski} that all doubly commuting, minimal, isometric dilations of a representation $(A^{(1)},\ldots,A^{(k)},\sigma)$ are unitarily equivalent. By Lemma \ref{coiso2}, if $(A^{(1)},\ldots,A^{(k)},\sigma)$ is a fully coisometric representation then all minimal, isometric dilations are also fully coisometric, and hence they are doubly commuting. It follows that the minimal isometric dilation is unique up to unitary equivalence.
  \end{proof}

	We now prove a key technical tool. We show that taking the minimal isometric dilation of a fully coisometric representation $(A^{(1)},\ldots,A^{(k)},\sigma)$ gives rise to the minimal isometric representation of the representation $(A_{\mathbf{n}},\sigma)$ when $\mathbf{n}\geq(1,\ldots,1)$. This allows us, in Lemma \ref{lemma41 rank k}, to prove the analogous result of Lemma \ref{lemma41} for product systems. In fact, Theorem \ref{rank k to rank 1} allows us to deduce Lemma \ref{lemma41 rank k} from Lemma \ref{lemma41}. Lemma \ref{lemma41 rank k} will play an important role in our analysis, just as Lemma \ref{lemma41} did in the study of the $C^*$-correspondence case.

  \begin{theorem}\label{rank k to rank 1}
   Let $(A^{(1)},\ldots,A^{(k)},\sigma)$ be a fully coisometric representation of a product system of $C^*$-correspondences $E$ on a Hilbert space $\Sc{V}$ with minimal isometric dilation $(S^{(1)},\ldots,S^{(k)},\rho)$ on a Hilbert space $\Sc{H}$. If $\mathbf{n}=(n_1,n_2,\ldots,n_k)\in\B{N}^k$ satisfies $n_i\neq0$ for $1\leq i\leq k$ then the $C^*$-correspondence representation $(S_\mathbf{n},\rho)$ of $E(\mathbf{n})$ is the $($unique$)$ minimal isometric dilation of $(A_\mathbf{n},\sigma)$.
  \end{theorem}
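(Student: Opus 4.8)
The plan is to verify directly that $(S_\mathbf{n},\rho)$ meets the three requirements of a dilation of $(A_\mathbf{n},\sigma)$ together with minimality, and then to read off uniqueness from the uniqueness clause of Theorem \ref{cstar dilation}. Most of the dilation axioms are formal consequences of the product-system structure; the real content is minimality, and that is where the hypothesis $n_i\neq 0$ for all $i$ together with the fully coisometric assumption enter.

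First I would record that $(S_\mathbf{n},\rho)$ is an isometric representation of $E(\mathbf{n})$: the operator $\tilde{S}_\mathbf{n}$ is a composition of the isometries $\tilde{S}^{(i)}_{n_i}$ tensored with identities, so it is itself an isometry, which is exactly the statement that $(S_\mathbf{n},\rho)$ is isometric. That $\Sc{V}$ reduces $\rho$ is part of the hypothesis. For a simple tensor $\xi\in E(\mathbf{n})$ the operator $S_\mathbf{n}(\xi)$ is a product of operators $S^{(i)}(\cdot)$, each of which leaves $\Sc{V}^\perp$ invariant; hence $S_\mathbf{n}(\xi)\Sc{V}^\perp\subseteq\Sc{V}^\perp$, and by linearity and continuity this holds for all $\xi\in E(\mathbf{n})$. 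Since each $S^{(i)}(\cdot)$ is block lower triangular with respect to $\Sc{V}\oplus\Sc{V}^\perp$ with $(1,1)$-corner $A^{(i)}(\cdot)$, the $(1,1)$-corner of the product is the corresponding product of the $A^{(i)}(\cdot)$, that is $P_\Sc{V}S_\mathbf{n}(\xi)|_\Sc{V}=A_\mathbf{n}(\xi)$.

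The main step is minimality. Writing $\Sc{K}_\mathbf{m}=\tilde{S}_\mathbf{m}(E(\mathbf{m})\otimes\Sc{V})$, minimality of the given product-system dilation reads $\Sc{H}=\bigvee_{\mathbf{m}\in\B{N}^k}\Sc{K}_\mathbf{m}$, while minimality of $(S_\mathbf{n},\rho)$ as a dilation of $(A_\mathbf{n},\sigma)$ reads $\Sc{H}=\bigvee_{m\geq0}\Sc{K}_{m\mathbf{n}}$. One inclusion is immediate. For the reverse I would prove the key claim that for every $\mathbf{p}\in\B{N}^k$ one has $\Sc{V}\subseteq\tilde{S}_\mathbf{p}(E(\mathbf{p})\otimes\Sc{V})$: by Lemma \ref{coiso2} the dilation is fully coisometric, so $\tilde{S}_\mathbf{p}$ is a coisometry and $\tilde{S}_\mathbf{p}\tilde{S}_\mathbf{p}^*=I$; moreover $\tilde{S}_\mathbf{p}^*\Sc{V}\subseteq E(\mathbf{p})\otimes\Sc{V}$, because for $v\in\Sc{V}$, $\eta\in E(\mathbf{p})$ and $w\in\Sc{V}^\perp$ we get $\<\tilde{S}_\mathbf{p}^*v,\eta\otimes w\>=\<v,S_\mathbf{p}(\eta)w\>=0$ using the invariance of $\Sc{V}^\perp$ just established. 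Hence $\Sc{V}=\tilde{S}_\mathbf{p}\tilde{S}_\mathbf{p}^*\Sc{V}\subseteq\tilde{S}_\mathbf{p}(E(\mathbf{p})\otimes\Sc{V})$. Combining this with the factorisation $\tilde{S}_{\mathbf{m}+\mathbf{p}}=\tilde{S}_\mathbf{m}(I_{E(\mathbf{m})}\otimes\tilde{S}_\mathbf{p})$ under $E(\mathbf{m}+\mathbf{p})=E(\mathbf{m})\otimes E(\mathbf{p})$ yields $\Sc{K}_\mathbf{m}\subseteq\Sc{K}_{\mathbf{m}+\mathbf{p}}$ for all $\mathbf{m},\mathbf{p}$.

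Finally, because every coordinate $n_i$ is nonzero, for each $\mathbf{m}\in\B{N}^k$ there is an $m$ with $m\mathbf{n}\geq\mathbf{m}$ coordinatewise; writing $m\mathbf{n}=\mathbf{m}+\mathbf{p}$ gives $\Sc{K}_\mathbf{m}\subseteq\Sc{K}_{m\mathbf{n}}$, so $\bigvee_{\mathbf{m}}\Sc{K}_\mathbf{m}\subseteq\bigvee_{m\geq0}\Sc{K}_{m\mathbf{n}}$, which is the required minimality. Uniqueness of $(S_\mathbf{n},\rho)$ as the minimal isometric dilation of the contractive representation $(A_\mathbf{n},\sigma)$ is then immediate from Theorem \ref{cstar dilation}. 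I expect the main obstacle to be the key claim $\Sc{V}\subseteq\tilde{S}_\mathbf{p}(E(\mathbf{p})\otimes\Sc{V})$ and, to a lesser extent, keeping the commutation isomorphisms $t_{i,j}$ straight when justifying the factorisation of $\tilde{S}_{\mathbf{m}+\mathbf{p}}$ from the defining relation $\tilde{S}^{(i)}(I_{E_i}\otimes\tilde{S}^{(j)})=\tilde{S}^{(j)}(I_{E_j}\otimes\tilde{S}^{(i)})(t_{i,j}\otimes I_{\Sc{H}})$; the coordinatewise domination argument that uses $n_i\neq0$ is then routine.
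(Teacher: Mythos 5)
Your proposal is correct and follows essentially the same route as the paper: both arguments rest on the observation that, since the minimal dilation is fully coisometric (Lemma \ref{coiso2}) and $\Sc{V}$ is coinvariant, one has $\tilde{S}_\mathbf{p}^*\Sc{V}\subseteq E(\mathbf{p})\otimes\Sc{V}$ and hence the spaces $\tilde{S}_\mathbf{m}(E(\mathbf{m})\otimes\Sc{V})$ increase with $\mathbf{m}$, after which the hypothesis $n_i\neq 0$ lets every $\mathbf{m}$ be dominated by a multiple of $\mathbf{n}$. Your direct monotonicity statement $\Sc{K}_\mathbf{m}\subseteq\Sc{K}_{\mathbf{m}+\mathbf{p}}$ is just a mild streamlining of the paper's three claims about the spaces $\Sc{H}_\mathbf{m}$, and the appeal to Theorem \ref{cstar dilation} for uniqueness matches the paper.
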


  \begin{proof}
   It is clear that $(S_\mathbf{n},\rho)$ is an isometric dilation of $(A_\mathbf{n},\sigma)$ for any $\mathbf{n}\in\B{N}^k$. It remains to show that the dilation is minimal when $n_i\neq0$ for each $i$.

   For any $\mathbf{n}\in\B{N}^k$ we define $\Sc{H}_\mathbf{n}$ to be the space mapped out by $(S_\mathbf{n},\sigma)$, i.e.
   \begin{equation*}
    \Sc{H}_\mathbf{n}=\bigvee_{\substack{m\in\B{Z}\\ m\geq0}}\tilde{S_\mathbf{n}}_m(E(\mathbf{n})^m\otimes\Sc{V}).
   \end{equation*}

   \begin{claim}[1]
    If $\mathbf{m},\mathbf{n}\in\B{N}^k$ and $\mathbf{m}\leq \mathbf{n}$, then $\Sc{H}_\mathbf{m}\subseteq\Sc{H}_\mathbf{n}$.
   \end{claim}
   Let $\mathbf{p}=\mathbf{n}-\mathbf{m}$. Take any $v\in\Sc{V}$ and $\xi\in E(\mathbf{m})$ then
   \begin{align*}
    \tilde{S}_\mathbf{m}(\xi\otimes v)&=\tilde{S}_\mathbf{m}(I_{E(\mathbf{m})}\otimes\tilde{S}_\mathbf{p})(I_{E(\mathbf{m})}\otimes\tilde{S}_\mathbf{p}^*)(\xi\otimes v)\\
    &\in\tilde{S}_\mathbf{n}(E(\mathbf{n})\otimes\Sc{V}).
   \end{align*}
   That the range of $\tilde{S}_\mathbf{m}^l$ is contained in the range of $\tilde{S}_\mathbf{n}^l$ for positive integers $l$ follows by a similar argument.

   \begin{claim}[2]
    If $\mathbf{m},\mathbf{n}\in\B{N}^k$ and $\mathbf{n}=l\mathbf{m}$ for some positive integer $l$, then $\Sc{H}_\mathbf{m}=\Sc{H}_\mathbf{n}$.
   \end{claim}
    We know from the first claim that $\Sc{H}_\mathbf{m}\subseteq\Sc{H}_\mathbf{n}$. The reverse inclusion follows from the fact that $\tilde{S}_\mathbf{n}$ is isomorphic to
    \begin{equation*}
     \tilde{S}_\mathbf{m}(I_{E(\mathbf{m})}\otimes\tilde{S}_\mathbf{m})\ldots(I_{E(\mathbf{m})^{p-1}}\otimes\tilde{S}_\mathbf{m}).
    \end{equation*}

   \begin{claim}[3]
    If $\mathbf{m},\mathbf{n}\in\B{N}^k$ such that $n_i,m_i\neq0$ for $1\leq i\leq k$, then $\Sc{H}_\mathbf{m}=\Sc{H}_\mathbf{n}$.
   \end{claim}
    Choose an integer $l$ such that $l\mathbf{m}\geq\mathbf{n}$. Then, by the previous two claims, $\Sc{H}_\mathbf{n}\subseteq\Sc{H}_{l\mathbf{m}}=\Sc{H}_\mathbf{m}$. The reverse inclusion follows similarly.

   Now, since $(S^{(1)},\ldots,S^{(k)},\rho)$ is a minimal dilation, we have that $\Sc{H}=\bigvee_{\mathbf{n}\in\B{N}^k}\Sc{H}_\mathbf{n}$. However, if we fix $\mathbf{n}\in\B{N}^k$ such that $n_i\neq 0$ for each $i$, then the previous three claims tell us that $\Sc{H}_\mathbf{m}\subseteq\Sc{H}_\mathbf{n}$ for every $\mathbf{m}\in\B{N}^k$. Hence $\Sc{H}=\Sc{H}_\mathbf{n}$ and so $(S_\mathbf{n},\rho)$ is the minimal isometric dilation of $(A_\mathbf{n},\sigma)$.
  \end{proof}

  \begin{remark}\label{rank k to rank 1 remark}
   The condition in Theorem \ref{rank k to rank 1} that $\mathbf{n}\geq (1,1,\ldots,1)$ is necessary to guarantee that $(S_\mathbf{n},\rho)$ is the minimal isometric dilation of $(A_\mathbf{n},\sigma)$. For example, let $\Sc{H}$ be a separable Hilbert space with orthonormal basis $\{e_n:n\geq0\}$. Define commuting isometries $T_1$ and $T_2$ on $\Sc{H}$ by
   \begin{align*}
    T_1e_n&=e_{2n} \ \text{ and}\\T_2e_n&=e_{3n}.
   \end{align*}
   Then $T_1^*$ and $T_2^*$ are commuting coisometries. Let $U_1$ and $U_2$ be the minimal commuting unitaries dilating $T_1^*$ and $T_2^*$. Note that commuting unitaries are necessarily doubly commuting. We have that for any $n,k\geq0$
   \begin{align*}
    \<U_1e_3,U_2^ke_n\>&=\<e_3,U_2^ke_{2n}\>=\<e_3,T_2^{*k}e_{2n}\>=0,
   \end{align*}
   and so $U_2$ is not the minimal isometric dilation of $T_2^*$.

   In the case of fully coisometric, atomic representations of single vertex $k$-graphs, however, it is not necessary for $\mathbf{n}\geq(1,\ldots,1)$ for Theorem \ref{rank k to rank 1} to be satisfied. See Example \ref{atomic flip}, or \cite{DavPowYang2, DavYang2}.
  \end{remark}

  We now prove a higher rank version of Lemma \ref{reducingspace}.
  \begin{lemma}\label{reducingspace2}
   Let  $(A^{(1)},\ldots,A^{(k)},\sigma)$ be a representation of a product system $E$ on a Hilbert space $\Sc{V}$ with a minimal isometric dilation $(S^{(1)},\ldots,S^{(k)},\rho)$ on a Hilbert space $\Sc{H}$. Let $\F{A}$ and $\F{S}$ be the unital \textsc{wot}-closed algebra generated by $(A^{(1)},\ldots,A^{(k)},\sigma)$ and $(S^{(1)},\ldots,S^{(k)},\rho)$ respectively. Further, suppose that the representation $(S^{(1)},\ldots,S^{(k)},\rho)$ is doubly commuting.
   Then if $\Sc{V}_1$ is an $\F{A}^*$-invariant subspace of $\Sc{V}$, $\Sc{H}_1=\F{S}[\Sc{V}_1]$ reduces $\F{S}$.

   If $\Sc{V}_1$ and $\Sc{V}_2$ are orthogonal $\F{A}^*$-invariant subspaces the $\Sc{H}_j=\F{S}[\Sc{V}_j]$ for $j=1,2$ are mutually orthogonal.

   If $\Sc{V}=\Sc{V}_1\oplus\Sc{V}_2$, then $\Sc{H}=\Sc{H}_1\oplus\Sc{H}_2$ and $\Sc{H}_j\cap\Sc{V}=\Sc{V}_j$ for $j=1,2$.
  \end{lemma}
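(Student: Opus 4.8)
The plan is to imitate the proof of Lemma \ref{reducingspace}, using the doubly commuting hypothesis as the substitute for the single-variable commutation available in the rank-one case. First I would record that $\Sc{V}_1$ is in fact $\F{S}^*$-invariant. Indeed $\rho(a)^*=\rho(a^*)$, while the dilation axioms give $S^{(i)}(\xi)^*|_{\Sc{V}}=A^{(i)}(\xi)^*$ for every $\xi\in E_i$ (since $\Sc{V}$ reduces $\rho$ and $\Sc{V}^\perp$ is $S^{(i)}(\xi)$-invariant). As $\Sc{V}_1$ is $\F{A}^*$-invariant and $\sigma(\Sc{A})$ is self-adjoint, $\Sc{V}_1$ is invariant under every generator $S^{(i)}(\xi)^*,\rho(a^*)$ of $\F{S}^*$; invariance of a closed subspace passes to the generated \textsc{wot}-closed algebra, so $\Sc{V}_1$ is $\F{S}^*$-invariant. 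The same computation shows $\Sc{V}_1$ reduces $\sigma(\Sc{A})$ and hence $\tilde{S}^{(j)*}v=\tilde{A}^{(j)*}v\in E_j\otimes\Sc{V}_1$ for all $v\in\Sc{V}_1$.

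The heart of the argument is to show $\Sc{H}_1=\F{S}[\Sc{V}_1]$ reduces $\F{S}$. It is $\F{S}$-invariant by construction, and since $\rho(a)^*=\rho(a^*)\in\F{S}$, for $\F{S}^*$-invariance it suffices to prove $S^{(j)}(\eta)^*\Sc{H}_1\subseteq\Sc{H}_1$ for each $j$ and $\eta\in E_j$. Writing $\Sc{H}_1=\overline{\spa}\{\tilde{S}_\mathbf{n}(\nu\otimes v):\mathbf{n}\in\B{N}^k,\ \nu\in E(\mathbf{n}),\ v\in\Sc{V}_1\}$, I would prove by induction on $|\mathbf{n}|$ that $\tilde{S}^{(j)*}\tilde{S}_\mathbf{n}(\nu\otimes v)\in E_j\otimes\Sc{H}_1$; contracting the $E_j$-leg against $\eta$ then gives $S^{(j)}(\eta)^*\Sc{H}_1\subseteq\Sc{H}_1$. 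The base case $|\mathbf{n}|=0$ is the inclusion $\tilde{S}^{(j)*}\Sc{V}_1=\tilde{A}^{(j)*}\Sc{V}_1\subseteq E_j\otimes\Sc{V}_1$ from the previous paragraph. For the inductive step, choose $i$ with $n_i\geq 1$ and factor $\tilde{S}_\mathbf{n}(\nu\otimes v)=S^{(i)}(\xi)h'$ under $E(\mathbf{n})\cong E_i\otimes E(\mathbf{n}-\mathbf{e}_i)$, with $h'=\tilde{S}_{\mathbf{n}-\mathbf{e}_i}(\nu'\otimes v)\in\Sc{H}_1$. If $j=i$, the isometric identity $\tilde{S}^{(i)*}\tilde{S}^{(i)}=I$ yields $\tilde{S}^{(i)*}S^{(i)}(\xi)h'=\xi\otimes h'\in E_i\otimes\Sc{H}_1$. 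If $j\neq i$, the doubly commuting relation gives
\begin{equation*}
 \tilde{S}^{(j)*}S^{(i)}(\xi)h'=(I_{E_j}\otimes\tilde{S}^{(i)})(t_{i,j}\otimes I_{\Sc{H}})(\xi\otimes\tilde{S}^{(j)*}h'),
\end{equation*}
and the inductive hypothesis places $\tilde{S}^{(j)*}h'\in E_j\otimes\Sc{H}_1$; since $\Sc{H}_1$ is $\F{S}$-invariant, the surviving creation operator $\tilde{S}^{(i)}$ returns the result to $E_j\otimes\Sc{H}_1$. This step, with its bookkeeping through the flip $t_{i,j}$, is where the doubly commuting hypothesis is essential and is the main obstacle; everything else is formal.

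With the reducing property in hand, the remaining two assertions follow as in the rank-one case. For orthogonality, if $\Sc{V}_1\perp\Sc{V}_2$ then for $v_1\in\Sc{V}_1$, $w\in\F{S}$ and $v_2\in\Sc{V}_2$ we have $\langle v_1,wv_2\rangle=\langle w^*v_1,v_2\rangle=0$, because $w^*v_1\in\Sc{V}_1$ (as $\Sc{V}_1$ is $\F{S}^*$-invariant) and $\Sc{V}_1\perp\Sc{V}_2$; hence $\Sc{V}_1\perp\Sc{H}_2$, and since $\Sc{H}_2$ reduces $\F{S}$ its orthocomplement is $\F{S}$-invariant and contains $\Sc{V}_1$, forcing $\Sc{H}_1=\F{S}[\Sc{V}_1]\subseteq\Sc{H}_2^\perp$. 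Finally, when $\Sc{V}=\Sc{V}_1\oplus\Sc{V}_2$ the subspace $\Sc{H}_1\oplus\Sc{H}_2$ is $\F{S}$-invariant and contains $\Sc{V}$, so it equals $\Sc{H}$ by minimality of the dilation, while $\Sc{H}_1\cap\Sc{V}=\Sc{V}_1$ follows from $\Sc{V}_1\subseteq\Sc{H}_1$, $\Sc{H}_1\perp\Sc{V}_2$ and $\Sc{V}=\Sc{V}_1\oplus\Sc{V}_2$.
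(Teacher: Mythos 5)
Your proof is correct and follows essentially the same route as the paper: both arguments show $S^{(j)}(\eta)^*\tilde{S}_{\mathbf{n}}(\nu\otimes v)\in\Sc{H}_1$ by cancelling against a factor of $\tilde{S}^{(j)}$ when the $j$-th degree is positive and invoking the doubly commuting relation to push $\tilde{S}^{(j)*}$ through to $\Sc{V}_1$ otherwise, then deduce the remaining claims as in Lemma \ref{reducingspace}. The only difference is organisational --- you peel off one creation operator at a time by induction on $|\mathbf{n}|$, whereas the paper handles the whole monomial at once --- which does not change the substance.
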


  \begin{proof}
   We will prove the first part of the theorem. The remaining parts follow in a similar manner as in Lemma \ref{reducingspace}.

   First, $\Sc{V}_1$ is $\F{A}^*$-invariant, and so $\Sc{V}_1$ is $\F{S}^*$-invariant. Elements of the form $\tilde{S}_\mathbf{n}(\eta\otimes v)$, with $\mathbf{n}\in\B{N}^k$, $\eta\in E(\mathbf{n})$ and $v\in\Sc{V}_1$, span a dense subset of $\Sc{H}_1$. Take $\mathbf{n}=(n_1,\ldots,n_k)\in\B{N}^k$ and $i\in\{1,\ldots,k\}$. Then for any $\xi\in E_i$, $\eta\in E(\mathbf{n})$, $v\in\Sc{V}_1$, $w\in\F{S}[\Sc{V}_1]^\perp$, if $n_i\neq 0$ then
   \begin{align*}
    \<S^{(i)}(\xi)^*\tilde{S}_{\mathbf{n}}(\eta\otimes v),w\>&=\<\tilde{S^{(i)}}^*\tilde{S}_{\mathbf{n}}(\eta\otimes v),\xi\otimes w\>\\
    &=\<I_{E^{i}}\otimes\tilde{S}_{\mathbf{n}-\mathbf{e}_i}(\eta\otimes v),\xi\otimes w\>\\
    &=0,
   \end{align*}
   and so $S^{(i)}(\xi)^*\tilde{S}_{\mathbf{n}}(\eta\otimes v)\in\Sc{H}_1$. If $n_i=0$ then, since our dilation is doubly commuting, 
   \begin{align*}
    \<S^{(i)}(\xi)^*\tilde{S}_{\mathbf{n}}(\eta\otimes v),w\>&=\<\tilde{S^{(i)}}^*\tilde{S}_{\mathbf{n}}(\eta\otimes v),\xi\otimes w\>\\
    &=\<(I_{E_i}\otimes\tilde{S}_{\mathbf{n}})(t\otimes I_\Sc{H})(I_{E(\mathbf{n})}\otimes\tilde{S^{(i)}}^*)(\eta\otimes v),\xi\otimes w\>\\
    &=\<(I_{E_i}\otimes\tilde{S}_{\mathbf{n}})(t\otimes I_\Sc{H})(I_{E(\mathbf{n})}\otimes\tilde{A^{(i)}}^*)(\eta\otimes v),\xi\otimes w\>\\
    &=0
   \end{align*}
   and so again $S^{(i)}(\xi)^*\tilde{S}_{\mathbf{n}}(\eta\otimes v)\in\Sc{H}_1$. Thus $\Sc{H}_1$ is $\F{S}$-reducing.
  \end{proof}

  \begin{remark}
   It is natural to ask if there is a higher rank analogy of Lemma \ref{wanderingspace}. If  $(A^{(1)},\ldots,A^{(k)},\sigma)$ is a representation of $E$ on $\Sc{V}$ with a minimal isometric dilation $(S^{(1)},\ldots,S^{(k)},\rho)$ on $\Sc{H}$, is the restriction of $(S^{(1)},\ldots,S^{(k)},\rho)$ to $\Sc{V}^\perp$ an induced representation? The answer is no. From \cite{Fowler} it is known that induced representations are doubly commuting. Looking at the atomic representations studied in \cite{DavPowYang2} and \cite{DavYang2}, or looking at Example \ref{atomic flip}, we see that the restriction to $\Sc{V}^\perp$ is not, in general, doubly commuting.
  \end{remark}

  \subsection{Finitely Correlated Representations}
  \begin{definition}
    An isometric representation $(S^{(1)},\ldots,S^{(k)},\rho)$ of a product system $E$ on a Hilbert space $\Sc{H}$ is called \emph{finitely correlated} if $(S^{(1)},\ldots,S^{(k)},\rho)$ is the minimal isometric dilation of a representation $(A^{(1)},\ldots,A^{(k)},\sigma)$ on a non-zero finite dimensional Hilbert space $\Sc{V}\subseteq\Sc{H}$.

	In particular, if $\F{S}$ is the unital \textsc{wot}-closed algebra generated by $(S^{(1)},\ldots,S^{(k)},\rho)$, then $(S^{(1)},\ldots,S^{(k)},\rho)$ is finitely correlated if there is a finite dimensional $\F{S}^*$-invariant subspace $\Sc{V}$ of $\Sc{H}$ such that $(S^{(1)},\ldots,S^{(k)},\rho)$ is the minimal isometric dilation of the representation $(P_{\Sc{V}}S^{(1)}(\cdot)|_{\Sc{V}},\ldots,P_{\Sc{V}}S^{(k)}(\cdot)|_{\Sc{V}},\rho(\cdot)|_{\Sc{V}})$.
  \end{definition}

	\begin{remark}
		In this section we are concerned with finitely correlated fully coisometric representations of product systems. Let $(E,\Sc{A})$ be a product system of $C^*$-correspondences over $\B{N}^k$. As in the $C^*$-correspondence case, assuming existence of a finitely correlated fully coisometric representation of $E$ puts restrictions on the $C^*$-algebra $\Sc{A}$. See Remark \ref{finitely representable}. The class of product systems of $C^*$-algebras which exhibit finitely correlated representations includes the $k$-graphs studied in \S\ref{examples}.
	\end{remark}

  A class of finitely correlated representations of $k$-graphs have been studied in \cite{DavPowYang2} ($2$-graphs)  and \cite{DavYang2} ($k$-graphs). These papers consider finitely correlated \emph{atomic} representations of $k$-graphs. These representations are both isometric and fully coisometric. Atomic representations are an example of \emph{partially isometric} representations, i.e. they are representations defined by row-contractions of partial isometries. Atomic representations of $k$-graphs are looked at more closely in \S \ref{Single Vertex Section}. As in the rank $1$ case above, the existence of a unique minimal generating space is shown. We will now prove the existence of such a space for a general finitely correlated, isometric, fully coisometric representation of a product system of $C^*$-correspondences over $\B{N}^k$. We begin with a higher rank version of Lemma \ref{lemma41}.

  \begin{lemma}\label{lemma41 rank k}
	Let $(S^{(1)},\ldots,S^{(k)},\rho)$ be a finitely correlated, fully coisometric representation of a product system $E$ on a Hilbert space $\Sc{H}$. Let $\F{S}$ be the unital \textsc{wot}-closed algebra generated by $(S^{(1)},\ldots,S^{(k)},\rho)$ and let $\Sc{V}$ be a finite dimensional $\F{S}^*$-invariant subspace of $\Sc{H}$ such that $(S^{(1)},\ldots,S^{(k)},\rho)$ is the minimal isometric dilation of the representation $(P_{\Sc{V}}S^{(1)}(\cdot)|_{\Sc{V}},\ldots,P_{\Sc{V}}S^{(k)}(\cdot)|_{\Sc{V}},\rho(\cdot)|_{\Sc{V}})$.

   Then if $\Sc{M}$ is a non-zero, $\F{S}^*$-invariant subspace of $\Sc{H}$, the subspace $\Sc{M}\cap\Sc{V}$ is non-trivial.
  \end{lemma}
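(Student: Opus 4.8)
**The plan is to reduce the higher-rank statement to the rank-one case already proved in Lemma \ref{lemma41}, using Theorem \ref{rank k to rank 1} as the bridge.** The key observation is that Lemma \ref{lemma41} is a statement about a single $C^*$-correspondence representation $(S,\rho)$ being the minimal isometric dilation of its compression to a finite-dimensional $\F{S}^*$-invariant subspace $\Sc{V}$. The conclusion we want — that every non-zero $\F{S}^*$-invariant $\Sc{M}$ meets $\Sc{V}$ non-trivially — refers to the WOT-closed algebra $\F{S}$ generated by the full product system representation. So the strategy is to produce a single $C^*$-correspondence in the product system whose associated representation is itself finitely correlated and fully coisometric, with the \emph{same} $\Sc{V}$, and whose generated algebra sits inside $\F{S}$.

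**First I would fix $\mathbf{n}=(1,1,\ldots,1)$ and pass to the single $C^*$-correspondence $E(\mathbf{n})$ with its representation $(S_\mathbf{n},\rho)$.** Since $(S^{(1)},\ldots,S^{(k)},\rho)$ is fully coisometric, Lemma \ref{coiso2} and Lemma \ref{fullycoiso uniq dil} guarantee it is the unique minimal isometric dilation of its finite-dimensional compression $(A^{(1)},\ldots,A^{(k)},\sigma)$ on $\Sc{V}$, which is itself fully coisometric. Because every $n_i\neq0$, Theorem \ref{rank k to rank 1} tells us that $(S_\mathbf{n},\rho)$ is the (unique) minimal isometric dilation of the compressed representation $(A_\mathbf{n},\sigma)$ on the same $\Sc{V}$. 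Moreover, $(A_\mathbf{n},\sigma)$ is a finite-dimensional representation of $E(\mathbf{n})$ on $\Sc{V}$, and since each $\tilde{A}^{(i)}$ is a coisometry the composite $\tilde{A}_\mathbf{n}$ is also a coisometry, so $(A_\mathbf{n},\sigma)$ is fully coisometric. Thus $(S_\mathbf{n},\rho)$ is a finitely correlated, fully coisometric representation of the single $C^*$-correspondence $E(\mathbf{n})$, with finite-dimensional coinvariant piece $\Sc{V}$, exactly the hypotheses of Lemma \ref{lemma41}.

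**Next I would apply Lemma \ref{lemma41} to $(S_\mathbf{n},\rho)$ and transfer the conclusion back.** Let $\F{S}_\mathbf{n}$ denote the unital WOT-closed algebra generated by $(S_\mathbf{n},\rho)$. Since $S_\mathbf{n}(\xi_1\otimes\cdots\otimes\xi_n)=S^{(1)}(\xi_1)\cdots$ is a product of the generators $S^{(i)}(\cdot)$ and $\rho(a)$, we have $\F{S}_\mathbf{n}\subseteq\F{S}$; consequently every $\F{S}^*$-invariant subspace of $\Sc{H}$ is also $\F{S}_\mathbf{n}^*$-invariant. Therefore, given a non-zero $\F{S}^*$-invariant subspace $\Sc{M}$, it is in particular $\F{S}_\mathbf{n}^*$-invariant, and Lemma \ref{lemma41} applied to the finitely correlated fully coisometric representation $(S_\mathbf{n},\rho)$ yields $\Sc{M}\cap\Sc{V}\neq\{0\}$, which is precisely the desired conclusion.

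**The main point to verify carefully is that $\Sc{V}$ is genuinely the correct coinvariant subspace for $(S_\mathbf{n},\rho)$.** The potential obstacle is confirming that the compression $(P_\Sc{V}S_\mathbf{n}(\cdot)|_\Sc{V},\rho(\cdot)|_\Sc{V})$ coincides with $(A_\mathbf{n},\sigma)$ and that $\Sc{V}$ is $\F{S}_\mathbf{n}^*$-invariant — both follow from $\Sc{V}$ being $\F{S}^*$-invariant together with $\F{S}_\mathbf{n}\subseteq\F{S}$, but one should note that the hypothesis $\mathbf{n}\geq(1,\ldots,1)$ is essential here, since Remark \ref{rank k to rank 1 remark} shows Theorem \ref{rank k to rank 1} can fail otherwise. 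Everything else is a direct invocation of the machinery already assembled, so no new technical estimate is required; the work of ``pulling back'' without the norm collapsing has already been done once and for all in Lemma \ref{lemma41}.
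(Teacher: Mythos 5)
Your proposal is correct and follows essentially the same route as the paper: fix $\mathbf{n}$ with all coordinates non-zero, use Theorem \ref{rank k to rank 1} to see that $(S_\mathbf{n},\rho)$ is the minimal isometric dilation of the (fully coisometric, finite dimensional) representation $(A_\mathbf{n},\sigma)$ on $\Sc{V}$, observe that $\Sc{M}$ is invariant under $\F{S}_\mathbf{n}^*$ since $\F{S}_\mathbf{n}\subseteq\F{S}$, and invoke Lemma \ref{lemma41}. Your extra checks (that $\tilde{A}_\mathbf{n}$ is a coisometry and that $\Sc{V}$ is the correct coinvariant subspace for $(S_\mathbf{n},\rho)$) are details the paper leaves implicit, but the argument is the same.
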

  
  \begin{proof}
   Take any $\mathbf{n}=(n_1,\ldots,n_k)\in\B{N}^k$ with $n_i\neq 0$ for $1\leq i\leq k$. By Theorem \ref{rank k to rank 1}, $(S_\mathbf{n},\rho)$ is the unique minimal isometric dilation of $(A_\mathbf{n},\sigma)$. The subspace $\Sc{M}$ is $\F{S}^*$-invariant and so for any $\xi\in E(\mathbf{n})$, $S_\mathbf{n}(\xi)^*\Sc{M}\subseteq\Sc{M}$. Let $\F{S}_\mathbf{n}$ be the unital \textsc{wot}-closed algebra generated by $S_{\mathbf{n}}(E(\mathbf{n}))$ and $\rho(\Sc{A})$. It follows that $\Sc{M}$ is invariant under $\F{S}_\mathbf{n}^*$. Hence, by Lemma \ref{lemma41}, $\Sc{M}\cap\Sc{V}$ is non-trivial.
  \end{proof}

  \begin{theorem}\label{prod sys summary}
	Let $(A^{(1)},\ldots,A^{(k)},\sigma)$ be a fully coisometric representation of a product system $E$ on a finite dimensional Hilbert space $\Sc{V}$, and let $(S^{(1)},\ldots,S^{(k)},\rho)$ be the unique minimal isometric dilation of $(A^{(1)},\ldots,A^{(k)},\sigma)$ on a Hilbert space $\Sc{H}$. Let $\F{A}$ be the unital algebra generated by the representation $(A^{(1)},\ldots,A^{(k)},\sigma)$ and let $\F{S}$ be the unital \textsc{wot}-closed algebra generated by $(S^{(1)},\ldots,S^{(k)},\rho)$.

   If $\Sc{V}_1,\Sc{V}_2,\ldots,\Sc{V}_k$ is a maximal set of pairwise orthogonal minimal $\F{A}^*$-invariant spaces of $\Sc{V}$ then $\hat{\Sc{V}}=\Sc{V}_1\oplus\ldots\oplus\Sc{V}_k$ is the unique minimal cyclic coinvariant subspace of $\Sc{V}$ and $\F{S}^*|_{\hat{\Sc{V}}}$ is a $C^*$-algebra.

   Further, if $\Sc{W}_{\mathbf{m}}$ is the unique, minimal cyclic space for the $C^*$-correspondence representation $(S_{\mathbf{m}},\rho)$, where $\mathbf{m}=(m_1,m_2,\ldots,m_k)$ $($with $m_i\neq 0$ for $1\leq i\leq k)$, then $\Sc{W}_{\mathbf{m}}=\hat{\Sc{V}}$.
  \end{theorem}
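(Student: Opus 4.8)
The plan is to transfer the rank-one analysis of Section \ref{cstar cor} to the product-system setting and then to reduce the genuinely $k$-fold statement to the single $C^*$-correspondence $E(\mathbf{m})$ by means of Theorem \ref{rank k to rank 1}.

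First I would establish the opening assertion exactly as in Lemma \ref{uniqueminimal2}. Since an isometric, fully coisometric representation is doubly commuting, Lemma \ref{reducingspace2} applies and shows that $\F{A}^*$-invariant subspaces of $\Sc{V}$ produce $\F{S}$-reducing subspaces of $\Sc{H}$; together with Lemma \ref{lemma41 rank k} this yields the higher-rank analogues of Proposition \ref{reducingspacecor} and Corollaries \ref{lemma41corollary2} and \ref{uniqueminimal} by transcribing their proofs verbatim. The argument of Lemma \ref{uniqueminimal2} then carries over to show that $\hat{\Sc{V}}=\Sc{V}_1\oplus\cdots\oplus\Sc{V}_k$ is $\F{A}$-cyclic and is contained in every $\F{A}$-cyclic, $\F{A}^*$-invariant subspace, and that $\Sc{H}=\bigoplus_j\Sc{H}_j$ with $\Sc{H}_j=\F{S}[\Sc{V}_j]$ being $\F{S}$-irreducible and $\hat{\Sc{V}}\cap\Sc{H}_j=\Sc{V}_j$. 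By Burnside's theorem the compression $\F{B}_j$ of $\F{A}$ to each $\Sc{V}_j$ is all of $\Sc{B}(\Sc{V}_j)$, and that $\F{S}^*|_{\hat{\Sc{V}}}$ is a $C^*$-algebra follows from Lemma \ref{cstaralg} precisely as in Theorem \ref{summary}.

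For the second assertion, fix $\mathbf{m}\geq(1,\dots,1)$. As $\tilde{A}_{\mathbf{m}}$ is a composition of coisometries, $(A_{\mathbf{m}},\sigma)$ is fully coisometric, and by Theorem \ref{rank k to rank 1} (and Lemma \ref{coiso2}) the representation $(S_{\mathbf{m}},\rho)$ is its unique minimal isometric dilation; hence Lemma \ref{uniqueminimal2} applies to the $C^*$-correspondence $E(\mathbf{m})$ and produces $\Sc{W}_{\mathbf{m}}$ as the maximal orthogonal sum of minimal $\F{A}_{\mathbf{m}}^*$-invariant subspaces, contained in every $\F{A}_{\mathbf{m}}$-cyclic coinvariant subspace. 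Two facts are immediate from $\F{A}_{\mathbf{m}}\subseteq\F{A}$: since $\hat{\Sc{V}}$ is $\F{A}^*$-invariant it is $\F{A}_{\mathbf{m}}^*$-invariant, and since $\Sc{W}_{\mathbf{m}}$ is $\F{A}_{\mathbf{m}}$-cyclic it is $\F{A}$-cyclic. To obtain $\Sc{W}_{\mathbf{m}}\subseteq\hat{\Sc{V}}$ I would show that $\hat{\Sc{V}}$ is $\F{A}_{\mathbf{m}}$-cyclic: each $\Sc{H}_j$ reduces $\F{S}$ and hence $\F{S}_{\mathbf{m}}$, the restriction of the representation to $\Sc{H}_j$ is again fully coisometric and is the minimal dilation of the compression to $\Sc{V}_j$, so Theorem \ref{rank k to rank 1} applied inside $\Sc{H}_j$ gives $\F{S}_{\mathbf{m}}[\Sc{V}_j]=\Sc{H}_j$; summing over $j$ yields $\F{S}_{\mathbf{m}}[\hat{\Sc{V}}]=\Sc{H}$, i.e.\ $\hat{\Sc{V}}$ is an $\F{A}_{\mathbf{m}}$-cyclic coinvariant subspace, whence $\Sc{W}_{\mathbf{m}}\subseteq\hat{\Sc{V}}$.

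The reverse inclusion $\hat{\Sc{V}}\subseteq\Sc{W}_{\mathbf{m}}$ is the heart of the matter, and I expect it to be the main obstacle. Because $\Sc{H}=\bigoplus_j\Sc{H}_j$ reduces $\F{S}_{\mathbf{m}}$ and the minimal cyclic coinvariant subspace is unique, $\Sc{W}_{\mathbf{m}}$ splits as $\bigoplus_j\Sc{W}_{\mathbf{m},j}$ with $\Sc{W}_{\mathbf{m},j}\subseteq\Sc{V}_j$ the minimal cyclic coinvariant subspace of $(S_{\mathbf{m}}|_{\Sc{H}_j},\rho)$, so it suffices to prove $\Sc{W}_{\mathbf{m},j}=\Sc{V}_j$ for each $j$; that is, on the irreducible piece $\Sc{H}_j$ — where the compressed algebra is the full matrix algebra $\Sc{B}(\Sc{V}_j)$ — the $\mathbf{m}$-fold representation $((B_j)_{\mathbf{m}},\sigma_j)$ must have no proper cyclic coinvariant subspace, equivalently (Lemma \ref{uniqueminimal2} for $E(\mathbf{m})$) $\Sc{V}_j$ must be spanned by minimal $((\F{B}_j)_{\mathbf{m}})^*$-invariant subspaces. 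This step genuinely uses full coisometry and cannot follow from the abstract structure of $(\F{B}_j)_{\mathbf{m}}$ alone: a general defect-free row contraction can generate a triangular algebra with a proper cyclic coinvariant subspace, and what rules this out is a Cauchy--Schwarz rigidity forced by the off-diagonal defect equations once $\Sc{B}(\Sc{V}_j)$ is already generated by the individual $A^{(i)}$. Concretely, I would analyse the unital completely positive map $\Phi_{A_{\mathbf{m}}}$ on $\Sc{B}(\Sc{V}_j)$ through its factorisation $\Phi_{A_{\mathbf{m}}}=\Phi_{A^{(1)}}^{m_1}\circ\cdots\circ\Phi_{A^{(k)}}^{m_k}$, using the fixed-point description of Lemma \ref{commutantfixedpoint2} for $E(\mathbf{m})$ together with Lemmas \ref{davkrishpLemma5.10} and \ref{davkrishpLemma5.11}, and feed in the defect relations $\tilde{A}^{(i)}\tilde{A}^{(i)*}=I$ for the generators to exclude any $((\F{B}_j)_{\mathbf{m}})^*$-invariant subspace whose orthocomplement fails to be invariant, thereby forcing $\Sc{W}_{\mathbf{m},j}=\Sc{V}_j$ and completing the proof.
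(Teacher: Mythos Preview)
Your treatment of the first assertion and of the inclusion $\Sc{W}_{\mathbf{m}}\subseteq\hat{\Sc{V}}$ is fine and matches the paper: Lemmas \ref{reducingspace2} and \ref{lemma41 rank k} do carry the rank-one arguments through, and your use of Theorem \ref{rank k to rank 1} inside each $\Sc{H}_j$ to see that $\hat{\Sc{V}}$ is $\F{A}_{\mathbf{m}}$-cyclic is a clean way to obtain $\Sc{W}_{\mathbf{m}}\subseteq\hat{\Sc{V}}$.

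The gap is in the reverse inclusion. Your plan reduces to showing, on each irreducible summand $\Sc{V}_j$, that $(B_j)_{\mathbf{m}}$ already has $\Sc{V}_j$ as its minimal cyclic coinvariant subspace, and you propose to extract this from the factorisation $\Phi_{A_{\mathbf{m}}}=\Phi_{A^{(1)}}^{m_1}\circ\cdots\circ\Phi_{A^{(k)}}^{m_k}$ together with Lemmas \ref{davkrishpLemma5.10}--\ref{commutantfixedpoint2}. But those lemmas describe the fixed-point set of $\Phi$ only \emph{after} one already knows the space equals its own minimal cyclic coinvariant subspace, which is exactly what you are trying to prove; and the individual maps $\Phi_{A^{(i)}}$ restricted to $\Sc{V}_j$ need not have scalar fixed-point sets, since the single generators need not generate $\Sc{B}(\Sc{V}_j)$ (cf.\ Example \ref{not partially iso}). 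So the proposed $\Phi$-analysis has no obvious leverage, and your description of ``feeding in the defect relations'' does not indicate a concrete mechanism that would force $\Sc{W}_{\mathbf{m},j}=\Sc{V}_j$.

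The paper closes this step by a different and more direct idea: it shows that $\Sc{W}_{\mathbf{m}}$ is in fact $\F{A}^*$-invariant, not merely $\F{A}_{\mathbf{m}}^*$-invariant. One fixes a coordinate, say $k$, forms $\Sc{U}'=\sum_{\xi\in E(\mathbf{m}-\mathbf{e}_k)}S_{\mathbf{m}-\mathbf{e}_k}(\xi)^*\Sc{W}_{\mathbf{m}}$, and uses the product-system commutation relation $\tilde{S}_{\mathbf{m}-\mathbf{e}_k}(I\otimes\tilde{S}_{\mathbf{m}})=\tilde{S}_{\mathbf{m}}(I\otimes\tilde{S}_{\mathbf{m}-\mathbf{e}_k})(t\otimes I)$ to see that $\Sc{U}'$ is $\F{A}_{\mathbf{m}}^*$-invariant; Lemma \ref{lemma41} (for $E(\mathbf{m})$) then forces $\Sc{W}_{\mathbf{m}}\cap\Sc{U}'$ to be all of $\Sc{W}_{\mathbf{m}}$, and from the construction of $\Sc{U}'$ this yields $A^{(k)}(\xi)^*\Sc{W}_{\mathbf{m}}\subseteq\Sc{W}_{\mathbf{m}}$. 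Repeating for each coordinate gives $\F{A}^*$-invariance of $\Sc{W}_{\mathbf{m}}$, and since $\Sc{W}_{\mathbf{m}}$ is already $\F{A}$-cyclic, the minimality of $\hat{\Sc{V}}$ forces $\hat{\Sc{V}}\subseteq\Sc{W}_{\mathbf{m}}$. This argument uses only the commutation structure and full coisometry, and bypasses any analysis of $\Phi_{A_{\mathbf{m}}}$.
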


  \begin{proof}
   Using Lemma \ref{reducingspace2} and Lemma \ref{lemma41 rank k}, that $\F{S}[\hat{\Sc{V}}]=\Sc{H}$ follows the same argument as in the $C^*$-correspondence case. That $\F{S}^*|_{\hat{\Sc{V}}}$ is a $C^*$-algebra follows by Lemma \ref{cstaralg}.

    Let $\mathbf{m}=(m_1,m_2,\ldots,m_k)\in\B{N}^k$ where $m_i\neq0$ for $1\leq i\leq k$. Let $\F{A}_\mathbf{m}$ be the unital algebra generated by the representation $(A_{\mathbf{m}},\sigma)$ and let $\Sc{W}$ be the unique minimal cyclic coinvariant space for $\F{A}_\mathbf{m}$. By Theorem \ref{rank k to rank 1} and since $\Sc{W}$ is unique, $\Sc{W}$ is contained in any minimal cyclic coinvariant space for $\F{A}$. In particular $\Sc{W}\subseteq\hat{\Sc{V}}$. Note also that $\F{A}[\Sc{W}]=\Sc{V}$ since $\F{A}_\mathbf{m}[\Sc{W}]=\Sc{V}$ and $\F{A}_\mathbf{m}\subseteq\F{A}$. We will show $\Sc{W}$ is $\F{A}^*$-invariant.

    Define the subspace $\Sc{U}'\subseteq\Sc{V}$ by
    \begin{equation*}
     \Sc{U}'=\sum_{\xi\in E(\mathbf{m}-\mathbf{e}_k)}S_{\mathbf{m}-\mathbf{e}_k}(\xi)^*\Sc{W}.
    \end{equation*}
     Note that, by the commutation relations
    \begin{equation*}
     \tilde{S}_{\mathbf{m}-\mathbf{e}_k}(I_{E(\mathbf{m}-\mathbf{e}_k)}\otimes\tilde{S}_\mathbf{m})=\tilde{S}_\mathbf{m}(I_{E(\mathbf{m})}\otimes\tilde{S}_{\mathbf{m}-\mathbf{e}_k})(t\otimes I_\Sc{H})
    \end{equation*}
    where $t$ is the isomorphism $t:E(\mathbf{m}-\mathbf{e}_k)\otimes E(\mathbf{m})\rightarrow E(\mathbf{m})\otimes E(\mathbf{m}-\mathbf{e}_k)$.

    So, if we take vectors $w\in\Sc{W}$, $v\in\Sc{V}\ominus\Sc{U}'$ and $\eta\in E(\mathbf{m})$ and $\xi\in E(\mathbf{m}-\mathbf{e}_k)$ then
    \begin{align*}
     \<S_{\mathbf{m}}(\eta)^*S_{\mathbf{m}-\mathbf{e}_k}(\xi)^*w,v\>
      &=\<(I_{E(\mathbf{m}-\mathbf{e}_k)}\otimes\tilde{S}_{\mathbf{m}}^*)\tilde{S}_{\mathbf{m}-\mathbf{e}_k}^*w,\xi\otimes\eta\otimes v\>\\
      &=\<(I_{E(\mathbf{m})}\otimes\tilde{S}_{\mathbf{m}-\mathbf{e}_k}^*)\tilde{S}_\mathbf{m}^*w,(t\otimes I_\Sc{H})(\xi\otimes\eta\otimes v\>\\
      &=\<\tilde{S}_\mathbf{m}^*w,(I_{E(\mathbf{m})}\otimes\tilde{S}_{\mathbf{m}-\mathbf{e}_k})(t\otimes I_\Sc{H})(\xi\otimes\eta\otimes v\>.
    \end{align*}
    Note that $\tilde{S}_\mathbf{m}^*w\in E(\mathbf{m})\otimes\Sc{W}$, $(I_{E(\mathbf{m})}\otimes\tilde{S}_{\mathbf{m}-\mathbf{e}_k})(t\otimes I_\Sc{H})(\xi\otimes\eta\otimes v)$ is in the space
    \begin{equation*}
     E(\mathbf{m})\otimes\tilde{S}_{\mathbf{m}-\mathbf{e}_k}(E(\mathbf{m}-\mathbf{e}_k)\otimes(\Sc{V}\ominus\Sc{U}')),
    \end{equation*}
    and $\Sc{W}$ and $\Sc{U}'$ are both $\sigma$ reducing subspaces. It follows that 
    \begin{equation*}
     \<S_{\mathbf{m}}(\eta)^*S_{\mathbf{m}-\mathbf{e}_k}(\xi)^*w,v\>=0,
    \end{equation*}
    and so $\Sc{U}'$ is $\F{A}_\mathbf{m}^*$-invariant. By Lemma \ref{lemma41}, $\Sc{U}'$ has non-trivial intersection with $\Sc{W}$. Let $\Sc{U}=\Sc{W}\cap\Sc{U}'$.
  
    Suppose $\Sc{U}\neq\Sc{W}$. A similar argument to above will show that
    \begin{equation*}
     \sum_{\xi\in E_k}S^{(k)}(\xi)^*(\Sc{W}\ominus\Sc{U})
    \end{equation*}
    has non-trivial intersection with $\Sc{W}$. Choose $w_1,\ldots,w_n\in\Sc{W}\ominus\Sc{U}$ and $\zeta_1,\ldots,\zeta_n\in E_k$ such that $\sum_{i=1}^n S^{(k)}(\zeta_i)^*w_i$ is a non-zero vector in $\Sc{W}$. Since $(A^{(1)},\ldots,A^{(k)},\sigma)$ is fully coisometric we can choose $\eta\in E(\mathbf{m}-\mathbf{e}_k)$ such that
    \begin{equation*}
     w:=S_{\mathbf{m}-\mathbf{e}_k}(\eta)^*\sum_{i=1}^n S^{(k)}(\zeta_i)^*w_i
    \end{equation*}
    is non-zero. Now $w$ is in $\Sc{W}$ and hence $w$ is in $\Sc{U}\cap(\Sc{W}\ominus\Sc{U})$. This contradiction shows that we must have $\Sc{U}=\Sc{W}$. By construction of $\Sc{U}'$, we have that for any $u\in\Sc{U}'$ and $\xi\in E_k$, $S^{(k)}(\xi)^*u$ is in $\Sc{W}$. Hence $\Sc{W}$ is invariant under $\F{A}_k^*$, where $\F{A}_j$ is the unital algebra generated by $(A^{(j)},\sigma)$. We can similarly show that $\Sc{W}$ is $\F{A}_j^*$-invariant for $1\leq j\leq k-1$, and so $\Sc{W}$ is $\F{A}^*$-invariant. Therefore $\Sc{W}=\hat{\Sc{V}}$, and thus $\hat{\Sc{V}}$ is unique.
    \end{proof}

   \begin{remark}\label{prod sys summary remark}
     Take a non-zero $\mathbf{m}\in\B{N}^k$ with $\mathbf{m}\not\geq(1,\ldots,1)$. Let $\Sc{U}$ be the minimal cyclic coinvariant subspace for the representation $(A_\mathbf{m},\sigma)$ of the $C^*$-correspondence $E(\mathbf{m})$ and $\hat{\Sc{V}}$ be the minimal cyclic coinvariant subspace for the representation of the product system, as in Theorem \ref{prod sys summary}. We necessarily have that $\Sc{U}\subseteq\hat{\Sc{V}}$. However given an arbitrary finitely correlated representation we can not say whether $\Sc{U}=\hat{\Sc{V}}$ or $\Sc{U}\subsetneq\hat{\Sc{V}}$. For the case when $k=2$ and $\mathbf{m}=(0,1)$, Example \ref{atomic flip} satisfies $\Sc{U}=\hat{\Sc{V}}$ and Example \ref{not partially iso} satisfies $\Sc{U}\subsetneq\hat{\Sc{V}}$.
   \end{remark}

   We again conclude that the compression to the unique minimal cyclic subspace for a finitely correlated, fully coisometric representation is a complete unitary invariant.

  \begin{corollary}\label{complete unitary invariant 2}
   Suppose $(S^{(1)},\ldots,S^{(k)},\sigma)$ and $(T^{(1)},\ldots,T^{(k)},\tau)$ are finitely correlated, fully coisometric representations of a product system $(E,\Sc{A})$ on $\Sc{H}_S$ and $\Sc{H}_T$ respectively. Let $\Sc{V}_S$ be the unique minimal cyclic coinvariant subspace for the representation $(S^{(1)},\ldots,S^{(k)},\sigma)$ and let $\Sc{V}_T$ be the unique minimal cyclic subspace for the representation $(T^{(1)},\ldots,T^{(k)},\tau)$.

   Then $(S^{(1)},\ldots,S^{(k)},\sigma)$ and $(T^{(1)},\ldots,T^{(k)},\tau)$ are unitarily equivalent if and only if the finite dimensional representations $(P_{\Sc{V}_S}S^{(1)}(\cdot)|_{\Sc{V}_S},\ldots,P_{\Sc{V}_S}S^{(k)}(\cdot)|_{\Sc{V}_S},\sigma(\cdot)|_{\Sc{V}_S})$ and $(P_{\Sc{V}_T}T^{(1)}(\cdot)|_{\Sc{V}_T},\ldots,P_{\Sc{V}_T}T^{(k)}(\cdot)|_{\Sc{V}_T},\tau(\cdot)|_{\Sc{V}_T})$ are unitarily equivalent.
  \end{corollary}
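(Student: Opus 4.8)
The plan is to follow the argument of Corollary \ref{complete unitary invariant} almost verbatim, the one genuine higher-rank complication being the appeal to uniqueness of the minimal isometric dilation in the converse direction.

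For the forward direction, suppose $U\colon\Sc{H}_S\to\Sc{H}_T$ is a unitary intertwining the two representations, so that $US^{(i)}(\xi)=T^{(i)}(\xi)U$ for all $i$ and $\xi\in E_i$, and $U\sigma(a)=\tau(a)U$ for all $a\in\Sc{A}$. First I would check that $U\Sc{V}_S$ is $\F{T}^*$-invariant and cyclic for $\F{T}$: invariance is immediate from $T^{(i)}(\xi)^*U\Sc{V}_S=US^{(i)}(\xi)^*\Sc{V}_S\subseteq U\Sc{V}_S$ (and similarly for $\tau(a)^*$, using that $\Sc{V}_S$ is $\F{S}^*$-invariant), while cyclicity follows from $\F{T}[U\Sc{V}_S]=U\F{S}[\Sc{V}_S]=U\Sc{H}_S=\Sc{H}_T$. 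Since $\Sc{V}_T$ is the unique minimal cyclic coinvariant subspace of $(T^{(1)},\ldots,T^{(k)},\tau)$, and the minimal cyclic coinvariant subspace is contained in every coinvariant cyclic subspace by Theorem \ref{prod sys summary}, we obtain $\Sc{V}_T\subseteq U\Sc{V}_S$; running the same argument with $U^*$ yields $\Sc{V}_S\subseteq U^*\Sc{V}_T$, so $U\Sc{V}_S=\Sc{V}_T$. Because $U$ then carries $\Sc{V}_S$ onto $\Sc{V}_T$ and $\Sc{V}_S^\perp$ onto $\Sc{V}_T^\perp$, it satisfies $UP_{\Sc{V}_S}=P_{\Sc{V}_T}U$, and a direct computation shows that $U|_{\Sc{V}_S}$ intertwines the two compressions.

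For the converse I would like to invoke uniqueness of the minimal isometric dilation, and this is where the higher-rank case genuinely departs from the rank-one case: over $\B{N}^k$ the minimal isometric dilation need not be unique in general, and uniqueness is only guaranteed for fully coisometric representations by Lemma \ref{fullycoiso uniq dil}. I expect the main (though modest) obstacle to be the preliminary verification that the compressions $(P_{\Sc{V}_S}S^{(i)}(\cdot)|_{\Sc{V}_S},\sigma(\cdot)|_{\Sc{V}_S})$ and $(P_{\Sc{V}_T}T^{(i)}(\cdot)|_{\Sc{V}_T},\tau(\cdot)|_{\Sc{V}_T})$ are themselves fully coisometric. To see this, note that since $\Sc{V}_S$ is $\F{S}^*$-invariant and cyclic one has $\Sc{H}_S=\F{S}[\Sc{V}_S]=\bigvee_{\mathbf{n}}\tilde{S}_\mathbf{n}(E(\mathbf{n})\otimes\Sc{V}_S)$, so that $(S^{(1)},\ldots,S^{(k)},\sigma)$ is precisely the minimal isometric dilation of its compression to $\Sc{V}_S$. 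As $(S^{(1)},\ldots,S^{(k)},\sigma)$ is fully coisometric by hypothesis, Lemma \ref{coiso2} forces this compression to be fully coisometric as well, and the same reasoning applies on the $T$ side.

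With full coisometry established the converse is routine. The given unitary equivalence of the two finite-dimensional compressions transports any minimal isometric dilation of one onto a minimal isometric dilation of the other, and by Lemma \ref{fullycoiso uniq dil} such minimal dilations are unique up to unitary equivalence. Since $(S^{(1)},\ldots,S^{(k)},\sigma)$ and $(T^{(1)},\ldots,T^{(k)},\tau)$ are exactly these minimal isometric dilations of the respective compressions, they are unitarily equivalent, which completes the proof.
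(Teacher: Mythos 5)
Your proof is correct and follows essentially the same route the paper intends: the paper states Corollary \ref{complete unitary invariant 2} without proof, deferring to the argument of Corollary \ref{complete unitary invariant}, which is exactly what you reproduce. Your extra care in the converse direction --- verifying via Lemma \ref{coiso2} that the compressions to $\Sc{V}_S$ and $\Sc{V}_T$ are fully coisometric so that Lemma \ref{fullycoiso uniq dil} supplies uniqueness of the minimal isometric dilation --- correctly supplies the one detail that genuinely differs from the rank-one case.
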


  \section{Higher Rank Graph Algebras}\label{examples}
  \subsection{Graph Algebras}\label{graph algebras}
   Let $G$ be a directed graph with a countable number of vertices $\Sc{V}(G)$ and a countable number of edges $\Sc{E}(G)$. If $e\in\Sc{E}(G)$ is an edge from a vertex $v$ to a vertex $w$ then we say that $v$ is the \emph{source} of $e$, denoted $s(e)$, and that $w$ is the \emph{range} of $e$, denoted $r(e)$. A vertex $x$ is called a \emph{source} if there is no edge $e$ with $r(e)=x$. A \emph{path} of length $k$ in $G$ is a finite collection of edges $e_ke_{k-1}\ldots e_1$ such that $r(e_i)=s(e_{i+1})$ for $1\leq i\leq k-1$. A \emph{cycle} is a path $e_ke_{k-1}\ldots e_1$ with $s(e_1)=r(e_k)$. If $x=s(e_1)$ and $y=r(e_k)$ then we say that $e_ke_{k-1}\ldots e_1$ is a path from $x$ to $y$. A graph $G$ is \emph{transitive} if, for any vertices $x,y\in\Sc{V}(G)$, there is a path from $x$ to $y$. A graph is \emph{strongly transitive} if it is transitive and it is neither a single cycle nor a graph with one vertex and no edges.

   As described in \cite{RaeSim, Skalski, MuhlySolel2}  a graph can be described by a $C^*$-correspondence. We follow the construction of \cite{RaeSim} as presented in \cite{Skalski}. Note that in the case of a finite graph this construction is the same as that given in \cite{MuhlySolel2}.

   Let $\Sc{A}=C_0(\Sc{V}(G))$ be the $C^*$-algebra of all functions on $\Sc{V}(G)$ vanishing at infinity. Let $E(G)$ be the set of functions $\xi:\Sc{E}(G)\rightarrow\B{C}$ which satisfy for each $v\in\Sc{V}(G)$
   \begin{equation*}
    \xi_v:=\sum_{\substack{e\in\Sc{E}(G)\\ s(e)=v}}|\xi(e)|^2<\infty
   \end{equation*}
   and the function $v\rightarrow \xi_v$ vanishes at infinity. Define an $\Sc{A}$-valued inner product on $E(G)$ by
   \begin{equation*}
    \<\xi,\eta\>(v)=\sum_{\substack{e\in\Sc{E}(G)\\ s(e)=v}}\overline{\xi(e)}\eta(e),
   \end{equation*}	
   for $\xi,\eta\in E(G)$. Define a left action of $\Sc{A}$ on $E(G)$ by
   \begin{equation*}
    (a\xi)(e)=a(r(e))\xi(e)
   \end{equation*}
   and a right action by
   \begin{equation*}
    (\xi a)(e)=\xi(e)a(s(e))
   \end{equation*}
   for $\xi\in E(G)$, $a\in\Sc{A}$ and $e\in\Sc{E}(G)$. These make $E(G)$ into a $C^*$-correspondence over $\Sc{A}$. We identify the vertex $v\in\Sc{V}(G)$ with function $\delta_v\in\Sc{A}$ which sends $v$ to $1$ and all other vertices to $0$. Similarly, we identify an edge $e\in\Sc{E}(G)$ with the function $\delta_e\in E(G)$ which sends $e$ to $1$ and all other edges to $0$.
   
   For a good introduction to graph algebras see \cite{Rae}. We remark that representations of $E(G)$ coincide with completely contractive representations of $G$ and that the dilation theorem for contractive representations of graphs in \cite{JuryKribs2} and \cite{DavKat} is implied by Theorem \ref{cstar dilation}.

   Denote by $\Sc{L}_G$ the \textsc{wot}-closed algebra generated by
	\begin{equation*}
		\{T_\xi,\varphi_\infty(a):\xi\in E,a\in\Sc{A}\}
	\end{equation*}
	 acting on the space $\Sc{H}_G:=\Sc{F}(E(G))$. The algebra $\Sc{L}_G$ is known as a \emph{free semigroupoid algebra}, see \cite{KribsPower1}.  When $G$ has a single vertex and $n$ edges then $\Sc{L}_G$ is a \emph{free semigroup algebra}, more commonly denoted $\Sc{L}_n$. 

   Finite dimensional representations of graphs are plentiful. Indeed Davidson and Katsoulis show that the finite dimensional representations of a graph $G$ separate points in $\Sc{L}_G$, \cite{DavKat}. Thus finitely correlated, isometric representations are also plentiful. Provided in \cite{DavKat} is an algorithm for creating finite dimensional representations. Below is a method for creating finite dimensional, fully coisometric representations. A similar example can be found in \cite{JuryKribs2}.

   \begin{example}
    Let $G$ be a finite graph with no sources. Let $\Sc{V}(G)=\{v_1,\ldots,v_n\}$. Let $\Sc{E}(G)_i=\{e\in\Sc{E}(G):r(e)=v_i\}=\{e_{i1},e_{i2},\ldots,e_{iC_i}\}$, where $C_i$ is the number of elements in $\Sc{E}(G)_i$. Let $\Sc{A}$ and $E(G)$ be as described above. Let $\Sc{H}$ be a finite dimensional Hilbert space and let $\Sc{K}=\Sc{H}_1\oplus\ldots\oplus\Sc{H}_n$, where $\Sc{H}_i=\Sc{H}$ for each $i$. We will define a representation $(A,\sigma)$ of $E(G)$ on $\Sc{K}$.
    
    For each vertex $v_i$ let $T_i=[T_{i1},\ldots,T_{iC_i}]$ be a defect free row-contraction on $\Sc{H}_i$, i.e.
    \begin{equation*}
     \sum_{j=1}^{C_i}T_{ij}T_{ij}^*=I_{\Sc{H}_i}.
    \end{equation*}
    Suppose $e_{ij}\in\Sc{E}(G)_i$ with $s(e_{ij})=v_l$. Define $A(e_{ij})\in\Sc{B}(\Sc{K})=M_{n}(\Sc{B}(\Sc{H}))$ by $(A(e_{ij}))_{i,l}=T_{i,j}$ and $(A(e_{ij}))_{k,m}=0$ when $(k,m)\neq(i,l)$. We define a representation $\sigma$ of $\Sc{A}$ on $\Sc{K}$ by $\sigma(v_i)=P_{\Sc{H}_i}=:P_{v_i}$ for $1\leq i\leq n$. Thus
    \begin{equation*}
     \sum_{e\in\Sc{E}(G)}A(e)A(e)^*=I_{\Sc{K}}
    \end{equation*}
    and
    \begin{equation*}
     P_{r(e)}A(e)P_{s(e)}=A(e).
    \end{equation*}
    It follows that $(A,\sigma)$ is a finite dimensional, fully coisometric representation of $E(G)$, see \cite{DavKat}, \cite{JuryKribs2}. This method readily extends to any graph containing a finite subgraph with no sources.
   \end{example}

  \subsubsection{Strong Double-Cycle Property}
  We now strengthen our results from \S \ref{cstar cor} for the special case of $C^*$-correspondences defined by finite graphs with the strong double-cycle property.
  \begin{definition}
   A vertex in $G$ is said to lie on a \emph{double-cycle} if it lies on two, distinct, minimal cycles. We say that $G$ has the \emph{strong double-cycle property} if for every vertex $x$ in $G$ there is a path from $x$ to a vertex lying on a double-cycle.
  \end{definition}

  \begin{example}
   When $n\geq 2$, a single vertex graph with $n$ edges has the strong double-cycle property. This is the case studied in \cite{DavKriShp}.
  \end{example}

  \begin{example}
   If each connected component of $G$ is strongly transitive, then $G$ has the strong double-cycle property.
  \end{example}

  The following result is proved in \cite{MuhlySolel2} and \cite{KribsPower1} for finite graphs with the strong double-cycle property, and in \cite{DavPitts} for when $\Sc{L}_G=\Sc{L}_n$ is a free semigroup algebra.

  \begin{theorem}\label{A1doublecycle}
   Suppose $G$ is a finite graph with the strong double-cycle property and $\varphi$ is a weak-$*$ continuous linear functional on $\Sc{L}_G$ with $\|f\|<1$. Then there are vectors $\xi$ and $\zeta$ in $\Sc{H}_G$, with $\|\xi\|,\|\zeta\|<1$, such that $\varphi(A)=\<A\xi,\zeta\>$ for all $A$ in $\Sc{L}_G$.
  \end{theorem}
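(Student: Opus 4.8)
The statement asserts that $\Sc{L}_G$ has the factorization property $\B{A}_1(1)$ from the theory of dual operator algebras, so the plan is to combine the standard trace-class representation of weak-$*$ continuous functionals with the internal geometry of the Fock space $\Sc{H}_G=\Sc{F}(E(G))$. First I would record that, since $\Sc{L}_G$ is \textsc{wot}-closed and hence is the dual of the quotient of the trace class by its preannihilator, the functional $\varphi$ (with $\|\varphi\|<1$) is implemented by a trace-class operator $T$ with $\|T\|_1<1$. Writing the singular value decomposition $T=\sum_k s_k\, u_k v_k^*$, with $\{u_k\}$, $\{v_k\}$ orthonormal and $\sum_k s_k=\|T\|_1<1$, and setting $x_k=s_k^{1/2}u_k$, $y_k=s_k^{1/2}v_k$, gives
\begin{equation*}
 \varphi(A)=\sum_k\<A x_k, y_k\>,\qquad \sum_k\|x_k\|^2=\sum_k\|y_k\|^2=\|T\|_1<1.
\end{equation*}
Thus the entire problem is to \emph{consolidate} this a priori infinite sum of rank-one functionals into a single one $\<A\xi,\zeta\>$ without increasing the vector norms.

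The mechanism I would use is the commutant. The commutant of $\Sc{L}_G$ is the right free semigroupoid algebra generated by the right creation operators $R_\mu$ (appending a path $\mu$ on the right); these commute with every $A\in\Sc{L}_G$ and act isometrically on the subspace of paths composable with $\mu$. For such an $R_\mu$ and vectors in its initial space one has the identity $\<A R_\mu x, R_\mu y\>=\<R_\mu A x, R_\mu y\>=\<A x, R_\mu^* R_\mu y\>=\<A x, y\>$. Hence, if I can choose a family of paths $\{\mu_k\}$ so that each $R_{\mu_k}$ is isometric on $x_k,y_k$ and the ranges are pairwise orthogonal, then the cross terms
\begin{equation*}
 \<A R_{\mu_k} x_k, R_{\mu_l} y_l\>=\<A x_k, R_{\mu_k}^* R_{\mu_l} y_l\>
\end{equation*}
vanish for $k\neq l$, and the vectors $\xi=\sum_k R_{\mu_k} x_k$ and $\zeta=\sum_k R_{\mu_k} y_k$ satisfy $\varphi(A)=\<A\xi,\zeta\>$ together with $\|\xi\|^2=\sum_k\|x_k\|^2<1$ and $\|\zeta\|^2<1$, as required by the theorem.

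This is precisely where the strong double-cycle property enters. To build infinitely many composable right paths with mutually orthogonal ranges — and to route the vectors $x_k,y_k$ into the relevant initial spaces — I would use that every vertex admits a path to a vertex $z$ lying on two distinct minimal cycles $\alpha,\beta$. The two cycles let one encode arbitrarily many distinct words such as $\alpha^{a_1}\beta\alpha^{a_2}\beta\cdots$ based at $z$, producing the required orthogonal family of right creation operators, and the reachability of $z$ from any vertex lets one first transport the Fourier support of the $x_k,y_k$ into the common initial space. The main obstacle is exactly this bookkeeping: unlike the free semigroup case $\Sc{L}_n$ of \cite{DavPitts}, the operators $R_\mu$ in a genuine graph algebra are only partial isometries governed by the vertex projections $\varphi_\infty(\delta_v)$, so one must verify that the chosen paths act isometrically on the selected vectors and that no range overlap survives the orthogonalization. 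Once the orthogonal family and the initial-space normalization are arranged, the two norm identities follow from orthogonality and the consolidation is complete; this is the route realised in \cite{MuhlySolel2, KribsPower1}.
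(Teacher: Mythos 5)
First, a point of reference: the paper does not prove this theorem. It is quoted from \cite{MuhlySolel2} and \cite{KribsPower1} (and from \cite{DavPitts} for the single-vertex case $\Sc{L}_G=\Sc{L}_n$), so what you are reconstructing is the argument of those references. Your skeleton --- represent the weak-$*$ continuous functional by a trace-class operator to obtain $\varphi=\sum_k\<\,\cdot\,x_k,y_k\>$ with $\sum_k\|x_k\|^2,\ \sum_k\|y_k\|^2<1$, then consolidate the sum into a single rank-one functional using partial isometries in the commutant with pairwise orthogonal ranges --- is indeed the Davidson--Pitts mechanism and is the right starting point.

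The gap is in the step you dismiss as ``bookkeeping.'' Write $\xi_\mu$ for the basis vector of $\Sc{H}_G$ indexed by a path $\mu$ and $Q_v$ for the projection onto $\overline{\operatorname{span}}\{\xi_\mu: s(\mu)=v\}$. With this paper's conventions (the creation operators $T_{\delta_e}$ prepend an edge at the \emph{range} end of a path), the right creation operator $R_\mu:\xi_\nu\mapsto\xi_{\nu\mu}$ has initial space $Q_{r(\mu)}\Sc{H}_G$; so to act isometrically on a vector supported over the vertex $v$ you need many paths $\mu$ with $r(\mu)=v$, i.e.\ paths \emph{into} $v$. The strong double-cycle property supplies only paths \emph{out of} $v$ to a double-cycle vertex $z$: for such a path $p$ the operator $R_p$ maps $Q_z\Sc{H}_G$ into $Q_v\Sc{H}_G$, which is the wrong direction for ``transporting the Fourier support,'' and $R_p^*$ is neither isometric nor in the commutant (the commutant of $\Sc{L}_G$ is not self-adjoint). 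This failure is not cosmetic. If $v$ is a source of $G$ --- which the strong double-cycle property permits, e.g.\ the graph with one edge from $u$ to $z$ and two loops at $z$ --- then $\varphi_\infty(\delta_v)$ is the rank-one projection onto $\B{C}\xi_v$ and lies in $\Sc{L}_G$, so every partial isometry $V$ in the commutant whose initial space contains $\xi_v$ satisfies $V\xi_v=\varphi_\infty(\delta_v)V\xi_v\in\B{C}\xi_v$ with $\|V\xi_v\|=1$, hence fixes $\xi_v$ up to a unimodular scalar. Consequently no two such partial isometries have orthogonal ranges, and the consolidation cannot be carried out uniformly on all of the $x_k,y_k$: no element of the commutant can move the components of these vectors lying over the ``shallow'' part of the Fock space. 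The proofs in \cite{KribsPower1,MuhlySolel2} must, and do, peel off the part of the functional carried by these components and recombine it at the end using the mutual orthogonality of the $\Sc{L}_G$-invariant subspaces $Q_v\Sc{H}_G$, with its own (delicate) norm estimate; that peeling-and-recombining step is precisely the content of passing from $\Sc{L}_n$ to $\Sc{L}_G$ and is absent from your proposal. As written, your argument covers only graphs in which every vertex already has enough incoming paths (for instance, when every vertex lies on a double cycle), not the full strong double-cycle hypothesis.
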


  We fix such a graph $G$ with a finitely correlated fully coisometric representation $(S,\rho)$ of $E(G)$ on a Hilbert space $\Sc{H}$. Let $\Sc{U}$ be the unique minimal cyclic coinvariant subspace for $(S,\rho)$ and let $(A,\sigma)$ be the compression of $(S,\rho)$ to $\Sc{U}$, so that $(S,\rho)$ is the unique minimal dilation of $(A,\sigma)$. Let $\F{A}$ be the unital algebra generated by $(A,\sigma)$ and let $\F{S}$ be the unital \textsc{wot}-closed algebra generated by $(S,\rho)$. By Theorem \ref{summary}, $\Sc{U}=\Sc{U}_1\oplus\ldots\oplus\Sc{U}_n$ is a direct sum of minimal $\F{A}^*$-invariant subspaces and $\F{A}$ is a $C^*$-algebra. For each $j$ let $\Sc{H}_j=\F{S}[\Sc{U}_j]$. Let $d=\dim\Sc{U}$ and let $\{f_1,\ldots,f_d\}$ form an orthonormal basis of $\Sc{U}$. We now follow the methods in \cite{DavKriShp} in order to give a full description of $\F{S}$. In particular, we will show that $\F{S}$ contains the projection onto $\Sc{U}$.

  For $1\leq i\leq n$, let $q_i$ be the compression of $\F{A}$ to $\Sc{U}_i$, i.e. $q_i(\F{A})=P_{\Sc{U}_i}\F{A}P_{\Sc{U}_i}=\Sc{B}(\Sc{U}_i)$. Choose a minimal set $H\subseteq\{1,\ldots,n\}$ such that $\sum_{h\in H}^\oplus q_h$ is faithful. The minimal ideal $\ker\sum_{h\in H\backslash\{h_0\}}^\oplus q_h$ is isomorphic to $\Sc{B}(\Sc{U}_{h_0})$. This kernel can be supported on more than one of the $\Sc{U}_i$'s. We let $H_h\subseteq H$ be the set of indices $i$ where $\Sc{U}_i$ is supported on $\ker\sum_{g\in H\backslash\{h\}}^\oplus q_g$. For each $h\in H$ let $m_h$ be the number of elements in $H_h$. If we let $\Sc{W}_h=\sum_{i\in H_h}^\oplus\Sc{U}_i$, then $\Sc{U}=\sum_{h\in H}^\oplus\Sc{W}_h$. For each $j\in H_h$ there is a spatial, algebra isomorphism $\sigma_j$ of $\Sc{B}(\Sc{U}_h)$ onto $\Sc{B}(\Sc{U}_j)$ such that
  \begin{equation*}
   \F{A}|_{\Sc{W}_h}=\Biggl\{\sideset{}{^\oplus}\sum_{j\in H_h}\sigma_j(X):X\in\Sc{B}(\Sc{U}_h)\Biggr\}.
  \end{equation*}
  For each $h\in H$ let $P_h$ be the projection onto $\Sc{W}_h$. For each $h\in H$ the projection $P_h$ lies in the centre of $\F{A}$.

  A closer look at Lemma \ref{wanderingspace} tells us that for each $\xi\in E$ and $a\in\Sc{A}$
  \begin{equation*}
   S(\xi)=\begin{bmatrix}A(\xi)&0\\X_\xi&T_\xi^{(\alpha)}\end{bmatrix},\ \ \rho(a)=\begin{bmatrix}\sigma(a)&0\\0&\rho(a)|_{\Sc{V}^\perp}\end{bmatrix}
  \end{equation*}
  where $\alpha=\dim\Sc{W}$, with $\Sc{W}=(\Sc{U}+\tilde{S}(E\otimes\Sc{U}))\ominus\Sc{U}$ as in Lemma \ref{wanderingspace}. Hence
  \begin{equation*}
   \F{S}=\begin{bmatrix}\F{A}&0\\ * &\Sc{L}_G^{(\alpha)}\end{bmatrix}.
  \end{equation*}

  We denote by $\F{B}$ the \textsc{wot}-closed operator algebra on $\Sc{H}$ spanned by $\Sc{B}(\Sc{H})P_\Sc{U}$ and $0_\Sc{U}\oplus\Sc{L}_G^{(\alpha)}$. The following three proofs follow the arguments of \cite[Lemma 4.4]{DavKriShp}, \cite[Lemma 5.14]{DavKriShp} and \cite[Corollary 5.3]{DavKriShp} respectively.

  \begin{lemma}\label{DKS Lemma 4.4}
   Every weak-$*$ continuous functional on $\F{B}$ is given by a trace class operator of rank at most $d+1$, where $d=\dim\Sc{U}$.

   Hence the \textsc{wot} and weak-$*$ topologies coincide on $\F{B}$ and $\F{S}$.
  \end{lemma}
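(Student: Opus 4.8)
The plan is to exploit the block description of $\F{S}$ and $\F{B}$ afforded by Lemma \ref{wanderingspace} together with the fact (Theorem \ref{A1doublecycle}) that every weak-$*$ continuous functional on $\Sc{L}_G$ is a single vector functional. First I would record that, with respect to the decomposition $\Sc{H}=\Sc{U}\oplus\Sc{U}^\perp$, the algebra $\F{B}=\spa\{\Sc{B}(\Sc{H})P_\Sc{U},\ 0_\Sc{U}\oplus\Sc{L}_G^{(\alpha)}\}$ is an \emph{internal direct sum} of vector spaces: the operators in $\Sc{B}(\Sc{H})P_\Sc{U}$ have the form $\bigl[\begin{smallmatrix}B_{11}&0\\ B_{21}&0\end{smallmatrix}\bigr]$, those in $0_\Sc{U}\oplus\Sc{L}_G^{(\alpha)}$ have the form $\bigl[\begin{smallmatrix}0&0\\0&L\end{smallmatrix}\bigr]$ with $L\in\Sc{L}_G^{(\alpha)}$, and these two subspaces intersect only in $0$. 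Consequently a weak-$*$ continuous functional $\varphi$ on $\F{B}$ splits as $\varphi=\psi+\chi$, where $\psi$ is weak-$*$ continuous on $\Sc{B}(\Sc{H})P_\Sc{U}$ and $\chi$ is weak-$*$ continuous on $0_\Sc{U}\oplus\Sc{L}_G^{(\alpha)}\cong\Sc{L}_G^{(\alpha)}$. It then suffices to bound the rank of the representing operators for $\psi$ and $\chi$ separately.

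For $\psi$, the point is that $\Sc{U}$ is only $d$-dimensional. I would extend $\psi$ to a normal functional on $\Sc{B}(\Sc{H})$, represented by a trace class operator $K$, so that $\psi(B)=\operatorname{tr}(BK)$. Since every $B\in\Sc{B}(\Sc{H})P_\Sc{U}$ satisfies $B=BP_\Sc{U}$, we have $\operatorname{tr}(BK)=\operatorname{tr}(BP_\Sc{U}K)$, so $\psi$ is equally well represented by $P_\Sc{U}K$, whose range lies in $\Sc{U}$ and hence whose rank is at most $d=\dim\Sc{U}$.

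The main step is the analysis of $\chi$ on $\Sc{L}_G^{(\alpha)}$, where I must collapse an amplification. Identify $\Sc{U}^\perp$ with the $\alpha$-fold amplification $\Sc{H}_G\otimes\B{C}^\alpha$ of the Fock space $\Sc{H}_G=\Sc{F}(E(G))$ (with $\alpha=\dim\Sc{W}$), on which $\Sc{L}_G^{(\alpha)}$ acts as $L\otimes I_\alpha$. A normal functional $\chi$ is represented by a trace class operator $T=(T_{ij})$ on $\Sc{H}_G\otimes\B{C}^\alpha$, and computing $\chi(L^{(\alpha)})=\operatorname{tr}\bigl((L\otimes I_\alpha)T\bigr)=\operatorname{tr}\!\bigl(L\sum_i T_{ii}\bigr)=\operatorname{tr}(LS)$ with $S:=\sum_i T_{ii}$ trace class on $\Sc{H}_G$. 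Thus $\chi$ descends to the \emph{normal functional} $L\mapsto\operatorname{tr}(LS)$ on $\Sc{L}_G$ itself. Now Theorem \ref{A1doublecycle} applies (after scaling to norm $<1$): there exist $\xi,\zeta\in\Sc{H}_G$ with $\operatorname{tr}(LS)=\<L\xi,\zeta\>$ for all $L\in\Sc{L}_G$, i.e. $\chi(L^{(\alpha)})=\<L^{(\alpha)}(\xi\otimes e_1),\zeta\otimes e_1\>$. Hence $\chi$ is given by the rank one operator $(\zeta\otimes e_1)(\xi\otimes e_1)^*$ supported on $\Sc{U}^\perp$. The hard part is precisely this reduction — verifying that the diagonal compression $S=\sum_i T_{ii}$ is trace class and that passing to the un-amplified functional is legitimate before invoking the strong double-cycle hypothesis through Theorem \ref{A1doublecycle}.

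Finally I would add the two pieces: $\varphi$ is represented by $P_\Sc{U}K+(\zeta\otimes e_1)(\xi\otimes e_1)^*$, a trace class operator of rank at most $d+1$, proving the first assertion. For the topological conclusion, note that WOT is always coarser than the weak-$*$ topology on $\Sc{B}(\Sc{H})$, so every WOT-continuous functional on $\F{B}$ is weak-$*$ continuous. Conversely, the first part shows every weak-$*$ continuous functional on $\F{B}$ is a finite-rank functional $B\mapsto\sum_{i=1}^{d+1}\<B\xi_i,\zeta_i\>$, which is WOT-continuous. Two locally convex topologies with the same continuous dual coincide, so WOT and weak-$*$ agree on $\F{B}$; since $\F{S}\subseteq\F{B}$, they also agree on $\F{S}$ by restriction.
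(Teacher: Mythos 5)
Your proof is correct and follows essentially the same route as the paper: you split a weak-$*$ continuous $\varphi$ through $B\mapsto BP_{\Sc{U}}$ and $B\mapsto BP_{\Sc{U}^\perp}$, handle the first piece by the finite-dimensionality of $\Sc{U}$, and handle the second via Theorem \ref{A1doublecycle}, exactly as in the paper's argument; your pinching reduction $S=\sum_i T_{ii}$, which lets you invoke Theorem \ref{A1doublecycle} for $\Sc{L}_G$ itself rather than for the ampliation $\Sc{L}_G^{(\alpha)}$, is a welcome extra justification of a step the paper passes over. The only small imprecision is the closing assertion that two locally convex topologies with the same continuous dual coincide (false for, e.g., the norm and weak topologies); what you need, and what does hold here, is that \textsc{wot} and weak-$*$ are each the \emph{weak} topology induced by their respective duals, so equality of the duals forces equality of the topologies.
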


  \begin{proof}
   Let $\varphi$ be a weak-$*$ continuous functional on $\F{B}$. If $B\in\F{B}$ then $\varphi(B)$ is determined by $\varphi(BP_\Sc{U})$ and $\varphi(BP_{\Sc{U}^\perp})$. By the Riesz Representation Theorem there are vectors $y_1,\ldots,y_d\in\Sc{U}$ such that
   \begin{equation*}
    \varphi(BP_\Sc{U})=\sum_{i=1}^d\<Bf_i,y_i\>.
   \end{equation*}
   By Theorem \ref{A1doublecycle} there are vectors $\xi,\zeta\in\Sc{U}^\perp$ such that $\varphi(A)=\<A\xi,\zeta\>$ for all $A\in\Sc{L}_G^{(\alpha)}$. Hence 
   \begin{equation*}
    \varphi(B)=\sum_{i=1}^d\<Bf_i,y_i\>+\<B\xi,\zeta\>,
   \end{equation*}
   and $\varphi$ is trace-class of rank at most $d+1$.
  \end{proof}

  \begin{lemma}\label{central projections}
   For $h\in H$, let $P_h$ denote the minimal central projections of $\F{A}$ as above. Then $P_h$ lies in $\F{S}$. Hence $P_{\Sc{U}}$ is in $\F{S}$.
  \end{lemma}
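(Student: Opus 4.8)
The plan is to prove $P_h\in\F{S}$ by a duality argument that exploits the fact that $\F{S}$ is weak-$*$ closed with the concrete predual furnished by Lemma \ref{DKS Lemma 4.4}. First I would record two compression homomorphisms. Since $\Sc{U}$ is $\F{S}^*$-invariant (equivalently $\Sc{U}^\perp$ is $\F{S}$-invariant), the maps
\begin{equation*}
 \Psi(B)=P_{\Sc{U}}B|_{\Sc{U}},\qquad \Psi'(B)=P_{\Sc{U}^\perp}B|_{\Sc{U}^\perp}
\end{equation*}
are \textsc{wot}-continuous unital homomorphisms on $\F{S}$, and by the displayed block description of $\F{S}$ they carry $\F{S}$ onto $\F{A}$ and onto $\Sc{L}_G^{(\alpha)}$ respectively. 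In particular $\Psi$ is surjective, so each central projection $P_{\Sc{W}_h}\in\F{A}$ has a lift $\hat P_h\in\F{S}$ with $\Psi(\hat P_h)=P_{\Sc{W}_h}$.

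Next, since $\F{S}$ is \textsc{wot}-closed, hence (Lemma \ref{DKS Lemma 4.4}) weak-$*$ closed in $\F{B}$ with predual the functionals $\varphi(B)=\sum_{i=1}^d\<Bf_i,y_i\>+\<B\xi,\zeta\>$ (where $y_i\in\Sc{U}$ and $\xi,\zeta\in\Sc{U}^\perp$), the bipolar theorem reduces $P_h\in\F{S}$ to showing $\varphi(P_h)=0$ for every such $\varphi$ that annihilates $\F{S}$. Writing $P_h$ in block form as $P_{\Sc{W}_h}\oplus 0$ and using $y_i\in\Sc{U}$, $\xi\in\Sc{U}^\perp$, one finds $\varphi(P_h)=\sum_i\<P_{\Sc{W}_h}f_i,y_i\>$, whereas the condition $\varphi|_{\F{S}}=0$ becomes $\sum_i\<\Psi(B)f_i,y_i\>+\<\Psi'(B)\xi,\zeta\>=0$ for all $B\in\F{S}$. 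Thus it suffices to produce, for each $h$, a bounded sequence $D_n\in\F{S}$ with $\Psi(D_n)\to I_{\Sc{U}}$ in $\F{A}$ and $\Psi'(D_n)\to0$ weak-$*$ in $\Sc{L}_G^{(\alpha)}$. Indeed, then $B_n:=\hat P_h D_n\in\F{S}$ satisfies $\Psi(B_n)=P_{\Sc{W}_h}\Psi(D_n)\to P_{\Sc{W}_h}$ and $\Psi'(B_n)=\Psi'(\hat P_h)\Psi'(D_n)\to0$ weak-$*$ (left multiplication by the fixed bounded operator $\Psi'(\hat P_h)$ being weak-$*$ continuous), so passing to the limit in the annihilation relation forces $\sum_i\<P_{\Sc{W}_h}f_i,y_i\>=0$, that is $\varphi(P_h)=0$. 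Summing over $h\in H$ then gives $P_{\Sc{U}}=\sum_{h\in H}P_h\in\F{S}$.

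The construction of $D_n$ is the heart of the matter, and is where the strong double-cycle property is indispensable; I expect this to be the main obstacle. The representation $(A,\sigma)$ on $\Sc{U}$ is fully coisometric (Lemma \ref{coiso}), and following \cite[Lemma 5.14]{DavKriShp} one builds $D_n$ from words in the generators whose compressions to $\Sc{U}$ converge to $I_{\Sc{U}}$ by the iterated defect-free relations, while their compressions to $\Sc{U}^\perp\cong\Sc{F}(E(G))\otimes\Sc{W}$ (Lemma \ref{wanderingspace}) act through the pure creation operators of $\Sc{L}_G^{(\alpha)}$, whose high-degree contributions tend weak-$*$ to $0$. The double-cycle hypothesis enters precisely through Theorem \ref{A1doublecycle}: the resulting factorization of weak-$*$ continuous functionals on $\Sc{L}_G$ both underlies the predual description used above and guarantees that the $\Sc{L}_G^{(\alpha)}$-part of these averages genuinely vanishes weak-$*$. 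For a single cycle this last step fails, consistent with $P_h$ not lying in $\F{S}$ in that degenerate case.
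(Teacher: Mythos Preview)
Your opening move---reduce to showing $\varphi(P_h)=0$ for every weak-$*$ continuous $\varphi$ on $\F{B}$ annihilating $\F{S}$---is exactly how the paper begins. The divergence, and the gaps, come after that.

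First, the functional decomposition you rely on is not correct as stated. The vectors $y_i$ in Lemma~\ref{DKS Lemma 4.4} must be allowed to lie in $\Sc{H}$, not merely in $\Sc{U}$: a weak-$*$ continuous functional on $\Sc{B}(\Sc{H})P_{\Sc{U}}$ is determined by $d$ arbitrary vectors in $\Sc{H}$, and for instance $B\mapsto\<Bf_1,z\>$ with $z\in\Sc{U}^\perp$ cannot be captured by your two terms. (The paper's own proof of the present lemma accordingly takes $x,y\in\Sc{H}^{(d+1)}$.) Consequently, for $B\in\F{S}$ with block form $\bigl[\begin{smallmatrix}B_{11}&0\\B_{21}&B_{22}\end{smallmatrix}\bigr]$ one has
\[
\varphi(B)=\sum_i\<B_{11}f_i,P_{\Sc{U}}y_i\>+\sum_i\<B_{21}f_i,P_{\Sc{U}^\perp}y_i\>+\<B_{22}\xi,\zeta\>,
\]
and your limit argument controls the first and third terms but says nothing about the middle one. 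Knowing only $\Psi(B_n)\to P_{\Sc{W}_h}$ and $\Psi'(B_n)\to 0$ weak-$*$ places no constraint on $(B_n)_{21}$, so you cannot pass to the limit and conclude $\varphi(P_h)=0$.

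Second, you do not construct the sequence $D_n$; you defer to \cite[Lemma~5.14]{DavKriShp} while conceding this is ``the main obstacle''. But that lemma is not proved by building such a sequence---its proof is precisely the structural argument the paper reproduces here. One writes $\varphi(\cdot)=\<\,\cdot^{(d+1)}x,y\>$ with $x,y\in\Sc{H}^{(d+1)}$, sets $\Sc{M}=\F{S}^{*(d+1)}[y]$ (so $x\perp\Sc{M}$ since $\varphi|_{\F{S}}=0$), invokes Lemma~\ref{lemma41} to obtain $\Sc{M}_0:=\Sc{M}\cap\Sc{U}^{(d+1)}\neq\{0\}$, and then uses that $\F{A}$ is a $C^*$-algebra together with Lemma~\ref{reducingspace} to decompose $\Sc{H}^{(d+1)}$ via the commuting projections $P^{(d+1)}$ and $P_{\Sc{M}_0}$ and deduce $P^{(d+1)}y\in\Sc{M}$, whence $\varphi(P)=\<x,P^{(d+1)}y\>=0$. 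The strong double-cycle hypothesis enters only through the predual description in Lemma~\ref{DKS Lemma 4.4}; the engine of the proof is Lemma~\ref{lemma41}, not an explicit approximation of $P_{\Sc{U}}$ from within $\F{S}$. Producing your $D_n$ directly---with $(1,1)$-block tending to $I_{\Sc{U}}$, $(2,2)$-block tending weak-$*$ to $0$, \emph{and} $(2,1)$-block under control---appears to be essentially as hard as the lemma itself.
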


  \begin{proof}
   Fix a minimal central projection $P$ of $\F{A}$. Let $\varphi$ be a non-zero weak-$*$ continuous functional on $\F{B}$ which is zero on $\F{S}$. We will show that $\varphi(P)=0$. It follows immediately that $P\in\F{S}$.

   By Lemma \ref{DKS Lemma 4.4} there are vectors $x,y\in\Sc{H}^{(d+1)}$ such that $\varphi(A)=\<A^{(d+1)}x,y\>$ for all $A\in\F{B}$. Let $\Sc{M}=\F{S}^{*(d+1)}[y]$. Since $\varphi$ is zero on $\F{S}$ it follows that $x$ is orthogonal to $\Sc{M}$. Let $\Sc{M}_0=\Sc{M}\cap\Sc{U}^{(d+1)}$. By Lemma \ref{lemma41}, $\Sc{M}_0$ is non-zero. The subspace $\Sc{M}_0$ is invariant under the $C^*$-algebra $\F{A}^{(d+1)}$ and hence $\Sc{M}_0$ is the range of a projection $Q$ in the commutant of $\F{A}^{(d+1)}$.

   We decompose $\Sc{U}^{(d+1)}$ into the following spaces
   \begin{align*}
    &P^{(d+1)}Q\Sc{U}^{(d+1)}\oplus P^{\perp(d+1)}Q\Sc{U}^{(d+1)}\oplus P^{(d+1)}Q^\perp\Sc{U}^{(d+1)}\oplus P^{\perp(d+1)}Q^\perp\Sc{U}^{(d+1)}\\
    &=:\Sc{M}_{pq}\oplus\Sc{M}_{p^\perp q}\oplus\Sc{M}_{pq^\perp}\oplus\Sc{M}_{p^\perp q^\perp}.
   \end{align*}
   Note that, as $Q$ and $P^{(d+1)}$ are projections in the commutant of $\F{A}^{(d+1)}$, the four spaces $M_{ij}$ are $\F{A}^{(d+1)}$-reducing. Also $\Sc{M}_0=\Sc{M}_{pq}\oplus\Sc{M}_{p^\perp q}$. Letting $\Sc{H}_{ij}=\F{S}[\Sc{M}_{ij}]$ we see that $\Sc{H}$ decomposes into
   \begin{equation*}
    \Sc{H}=\Sc{H}_{pq}\oplus\Sc{H}_{p^\perp q}\oplus\Sc{H}_{p q^\perp}\oplus\Sc{H}_{p^\perp q^\perp}.
   \end{equation*}
   It follows that $y\in\Sc{H}_{pq}\oplus\Sc{H}_{p^\perp q}=\F{S}[\Sc{M}_0]$ and so $P_{\Sc{U}}^{(d+1)}y\in\Sc{M}_0$. The projection $P_\Sc{U}$ dominates $P$ and so $P^{(d+1)}y\in\Sc{M}_0$. Hence
   \begin{equation*}
    \varphi(P)=\<P^{(d+1)}x,y\>=\<x,P^{(d+1)}y\>=0.
   \end{equation*}
  \end{proof}

  \begin{lemma}\label{central projections 2}
   The algebra $\F{S}P_\Sc{U}\simeq\sum_{h\in H}^\oplus(\Sc{B}(\Sc{H}_h)P_h)^{(m_h)}$, where $m_h=|H_h|$.
  \end{lemma}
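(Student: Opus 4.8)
The plan is to prove the two inclusions $\F{S}P_\Sc{U}\subseteq\F{T}$ and $\F{T}\subseteq\F{S}P_\Sc{U}$, where $\F{T}:=\sum_{h\in H}^\oplus(\Sc{B}(\Sc{H}_h)P_h)^{(m_h)}$, writing $\Sc{H}_h:=\F{S}[\Sc{U}_h]$ for a chosen representative $h\in H$ and reading $P_h$ in the statement as the projection of $\Sc{H}_h$ onto $\Sc{U}_h$ (to be distinguished from the global central projection onto $\Sc{W}_h$). The first inclusion is a structural computation; the second is a duality argument modelled on the proof of Lemma \ref{central projections}.

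First I would set up a reducing decomposition. Each $\Sc{W}_h$ is $\F{A}^*$-invariant, so by Lemma \ref{reducingspace} the spaces $\Sc{K}_h:=\F{S}[\Sc{W}_h]=\bigoplus_{i\in H_h}\F{S}[\Sc{U}_i]$ reduce $\F{S}$, are mutually orthogonal, and satisfy $\Sc{H}=\sum_{h\in H}^\oplus\Sc{K}_h$. For $i\in H_h$ the compressions to $\Sc{U}_i$ are the mutually unitarily equivalent irreducibles $q_i$, so by Corollary \ref{complete unitary invariant} the dilations $\F{S}[\Sc{U}_i]$ are mutually unitarily equivalent copies of $\Sc{H}_h$ carrying $\Sc{U}_i$ onto $\Sc{U}_h$. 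Choosing the intertwining unitaries identifies $\Sc{K}_h\cong\Sc{H}_h\otimes\B{C}^{m_h}$ with $\F{S}|_{\Sc{K}_h}\cong\F{S}_h\otimes I_{m_h}$ (where $\F{S}_h:=\F{S}|_{\Sc{H}_h}$) and the global projection onto $\Sc{W}_h$ equal to $P_{\Sc{U}_h}\otimes I_{m_h}$. Since $P_\Sc{U}\in\F{S}$ splits as the sum of these projections (Lemma \ref{central projections}), this gives $\F{S}P_\Sc{U}=\bigoplus_h\F{S}P_{\Sc{W}_h}$ with $\F{S}P_{\Sc{W}_h}\cong(\F{S}_hP_{\Sc{U}_h})\otimes I_{m_h}$, hence $\F{S}P_\Sc{U}\subseteq\F{T}$. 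This also reduces the whole lemma to the single-copy identity $\F{S}_hP_{\Sc{U}_h}=\Sc{B}(\Sc{H}_h)P_{\Sc{U}_h}$, i.e. to filling out the full ``column''.

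For the reverse inclusion I would show directly that $\F{T}\subseteq\F{S}$; since every element of $\F{T}$ is supported on $\Sc{U}$, this yields $\F{T}=\F{T}P_\Sc{U}\subseteq\F{S}P_\Sc{U}$. As $\F{S}$ is weak-$*$ closed (Lemma \ref{DKS Lemma 4.4}), it suffices by duality to show that any weak-$*$ continuous functional $\varphi$ on $\F{B}$ with $\varphi|_\F{S}=0$ also annihilates $\F{T}$. By Lemma \ref{DKS Lemma 4.4} write $\varphi(B)=\sum_{i=1}^d\<Bf_i,y_i\>+\<B\xi,\zeta\>$ with $f_i,y_i\in\Sc{U}$ and $\xi,\zeta\in\Sc{U}^\perp$. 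For $B\in\F{T}$ one has $B=BP_\Sc{U}$, so the tail term $\<B\xi,\zeta\>$ vanishes and $\varphi(B)=\operatorname{tr}(BK_0)$ with $K_0=\sum_i\lvert f_i\>\<y_i\rvert\in\Sc{B}(\Sc{U})$. Testing $\varphi$ against the corner $\F{A}\oplus 0\subseteq\F{S}$ (where the tail term again drops, these operators killing $\Sc{U}^\perp$) gives $\operatorname{tr}(aK_0)=0$ for all $a\in\F{A}$, so $K_0$ lies in the pre-annihilator of $\F{A}$. A short computation then gives $P_\Sc{U}\F{T}P_\Sc{U}=\F{A}$ (because $P_{\Sc{U}_h}\Sc{B}(\Sc{H}_h)P_{\Sc{U}_h}=\Sc{B}(\Sc{U}_h)$ and $\F{A}|_{\Sc{W}_h}=\Sc{B}(\Sc{U}_h)\otimes I_{m_h}$), whence $\varphi(B)=\operatorname{tr}(BK_0)=\operatorname{tr}(P_\Sc{U}BP_\Sc{U}\,K_0)=0$, since $P_\Sc{U}BP_\Sc{U}\in\F{A}$. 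This proves $\F{T}\subseteq\F{S}$ and closes the argument.

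The hard part is the reverse inclusion, and the point where the hypotheses are genuinely used is the special form of $\varphi$ supplied by Lemma \ref{DKS Lemma 4.4} (low rank, with $y_i\in\Sc{U}$ and $\xi,\zeta\in\Sc{U}^\perp$), which is exactly where the strong double-cycle property enters through Theorem \ref{A1doublecycle}. Once that form is available, the two facts that make everything collapse are that $\F{A}$ is a genuine $C^*$-algebra sitting as a full corner of $\F{S}$ (Theorem \ref{summary}) and that compressing $\F{T}$ back to $\Sc{U}$ returns precisely $\F{A}$; the only delicate bookkeeping is keeping the global projections onto the $\Sc{W}_h$ separate from the single-copy projections $P_{\Sc{U}_h}$ appearing in $\F{T}$.
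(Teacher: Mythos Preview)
Your reduction to the single--copy statement $\F{S}_hP_{\Sc{U}_h}=\Sc{B}(\Sc{H}_h)P_{\Sc{U}_h}$ is correct and matches the paper's set-up. The gap is in your duality argument for the reverse inclusion. You invoke Lemma~\ref{DKS Lemma 4.4} to write $\varphi(B)=\sum_i\<Bf_i,y_i\>+\<B\xi,\zeta\>$ with $y_i\in\Sc{U}$, and everything after that hinges on this: it is precisely what lets you pass from $\varphi(B)$ to $\operatorname{tr}(P_{\Sc{U}}BP_{\Sc{U}}\,K_0)$. But Lemma~\ref{DKS Lemma 4.4} does not give $y_i\in\Sc{U}$; a weak-$*$ functional on $\Sc{B}(\Sc{H})P_{\Sc{U}}\cong\Sc{B}(\Sc{U},\Sc{H})$ is represented by $\sum_i\<\,\cdot\,f_i,y_i\>$ with $f_i$ a basis of $\Sc{U}$ and $y_i\in\Sc{H}$, and in general the $y_i$ cannot be forced into $\Sc{U}$ (already in the toy case $\Sc{U}=\B{C}e_1\subset\B{C}^2$, the functional $[a\ 0;\ b\ 0]\mapsto b$ requires $y=e_2\notin\Sc{U}$). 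With $y_i\in\Sc{H}$, the quantity $\sum_i\<Bf_i,y_i\>$ sees the full range of $B$ in $\Sc{H}$, so your compression step $\varphi(B)=\operatorname{tr}(P_{\Sc{U}}BP_{\Sc{U}}K_0)$ fails and the argument collapses. In effect, assuming $y_i\in\Sc{U}$ is equivalent to assuming that every annihilating functional already ignores the off-$\Sc{U}$ part of $\F{T}$, which is essentially the conclusion you want.

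The paper avoids duality here altogether and gives a short constructive argument. In the irreducible case ($\F{A}=\Sc{B}(\Sc{U})$), Lemma~\ref{central projections} puts $P_{\Sc{U}}\in\F{S}$, and multiplying the block form of $\F{S}$ on the left by $P_{\Sc{U}}$ shows $\Sc{B}(\Sc{U})\subseteq\F{S}$, so each rank-one $vv^*$ with $v\in\Sc{U}$ lies in $\F{S}$. Cyclicity $\F{S}[v]=\Sc{H}$ then produces $T_k\in\F{S}$ with $T_kv\to x$ for any $x\in\Sc{H}$, whence $T_k(vv^*)=(T_kv)v^*\to xv^*$ in \textsc{wot}; thus $xv^*\in\F{S}$ for all $x\in\Sc{H}$, $v\in\Sc{U}$, i.e.\ $\Sc{B}(\Sc{H})P_{\Sc{U}}\subseteq\F{S}$. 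The general case follows from this via your own reduction. This is the missing idea: use $P_{\Sc{U}}\in\F{S}$ to manufacture rank-ones inside $\F{S}$ and then push them out to $\Sc{H}$ by cyclicity, rather than trying to dualise.
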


  \begin{proof}
   First suppose that $\F{A}=\Sc{B}(\Sc{U})$, i.e. $\Sc{U}$ is a minimal $\F{A}^*$-invariant subspace. By Lemma \ref{central projections}, the projection $P_{\Sc{U}}$ is in $\F{S}$. Hence $\F{S}P_{\Sc{U}}=\Sc{B}(\Sc{U})$ is in $\F{S}$. In particular, for any $v\in\Sc{U}$ the rank $1$ operator $vv^*$ is in $\F{S}$. Note also that $\F{S}[v]=\Sc{H}$ for any non-zero $v\in\Sc{U}$. Hence for any $x\in\Sc{H}$ there are operators $T_k$ in $\F{S}$ such that $T_kv$ converges to $x$. Hence $T_kvv^*$ is in $\F{S}$. Hence $xv^*$ is in $\F{S}$ for all $x\in\Sc{H}$ and $v\in\Sc{U}$. Therefore $\Sc{B}(\Sc{H})P_{\Sc{U}}$ is in $\F{S}$.

   Returning to the general case, note that there is a unitary equivalence between $\sum_{j\in H_h}^\oplus\Sc{H}_j$ and $\Sc{H}_h\otimes\B{C}^{(m_h)}$. Lemma \ref{central projections} tells us that $P_{\Sc{W}_h}\simeq P_h^{(m_h)}$ lies in $\F{S}$ for each $h\in H$. From the first paragraph it now follows that $\F{S}P_{\Sc{U}}$ decomposes as $\sum_{h\in H}^\oplus(\Sc{B}(\Sc{H}_h)P_h)^{(m_h)}$.
  \end{proof}

  Combining Lemma \ref{central projections} and Lemma \ref{central projections 2} with Theorem \ref{summary} we get the following theorem. When $G$ is a single vertex graph with $2$ or more edges, Theorem \ref{graph summary} is the same as \cite[Theorem 5.15]{DavKriShp}.

  \begin{theorem}\label{graph summary}
   Let $G$ be a finite graph with the strong double cycle property. Let $(A,\sigma)$ be fully coisometric, finite dimensional representation of $G$ on a Hilbert space $\Sc{U}$. Let $(S,\rho)$ be the unique minimal isometric dilation of $(A,\sigma)$ to a Hilbert space $\Sc{K}$. Let $\F{A}=\Alg\{A(\xi),\sigma(a):\xi\in E,a\in\Sc{A}\}$ and $\F{S}=\Alg\{S(\xi),\rho(a):\xi\in E,a\in\overline{\Sc{A}\}}^{\textsc{wot}}$

   If
   \begin{equation*}
    \hat{\Sc{U}}=\sideset{}{^\oplus}\sum_{j=1}^n\Sc{U}_j
   \end{equation*}
   is a maximal direct sum of minimal $\F{A}^*$-invariant subspaces of $\Sc{U}$ then $\hat{\Sc{U}}$ is the \emph{unique} minimal $\F{A}^*$-invariant 	subspace such that $\F{S}[\hat{\Sc{U}}]=\Sc{H}$. The compression $\hat{\F{A}}$ of $\F{A}$ to $\hat{\Sc{U}}$ is a $C^*$-algebra. Writing $\hat{\Sc{U}}$ as $\sum_{h\in H}^\oplus\Sc{U}_h^{(m_h)}$, where $\Sc{U}_h$ has dimension $d_h$ and multiplicity $m_h$ then
   \begin{equation*}
    \hat{\F{A}}=\sideset{}{^\oplus}\sum_{h\in h}M_{d_h}\otimes\B{C}^{m_h}.
   \end{equation*}
   Let $P_h$ be the projection onto $\Sc{U}_h$. Then the dilation acts on the space
   \begin{equation*}
    \Sc{K}=\sideset{}{^\oplus}\sum_{h\in H}\Sc{K}_h^{(m_h)}=\hat{\Sc{U}}\oplus\Sc{H}_G^{(\alpha)}
   \end{equation*}
   where $\Sc{K}_h=\Sc{U}_h\oplus\Sc{H}_h^{(\alpha_h)}$, $\alpha_h=\dim((\tilde{S}(E(G)\otimes\Sc{U}_h)\ominus\Sc{U}_h)$ and
   \begin{equation*}
    \alpha=\sum_{h\in H}\alpha_hm_h.
   \end{equation*}
   The algebra $\F{S}$ decomposes as
   \begin{equation*}
    \F{S}\simeq\sideset{}{^\oplus}\sum_{h\in H}(\Sc{B}(\Sc{H}_h)P_h)^{(m_h)}+(0_{\hat{\Sc{U}}}\oplus\Sc{L}_G^{(\alpha)}).
   \end{equation*}
  \end{theorem}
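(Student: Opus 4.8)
The plan is to assemble pieces already in hand rather than to prove anything genuinely new: Theorem~\ref{summary} supplies the uniqueness of $\hat{\Sc{U}}$, Lemma~\ref{cstaralg} supplies the $C^*$-structure, and Lemmas~\ref{central projections} and~\ref{central projections 2} supply the membership of the central projections in $\F{S}$ together with the description of $\F{S}P_{\Sc{U}}$. First I would reduce to the standing setting of those lemmas. By Theorem~\ref{summary}, $\hat{\Sc{U}}$ is the unique minimal $\F{A}^*$-invariant subspace with $\F{S}[\hat{\Sc{U}}]=\Sc{K}$; since $\hat{\Sc{U}}$ is cyclic for $\F{A}$, Proposition~\ref{reducingspacecor} (and the remark following it) shows that $(S,\rho)$ is simultaneously the minimal isometric dilation of the compression $(\hat{A},\hat{\sigma})$ of $(A,\sigma)$ to $\hat{\Sc{U}}$. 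Thus I may replace $(A,\sigma)$ by $(\hat{A},\hat{\sigma})$ and assume $\Sc{U}=\hat{\Sc{U}}$, which is exactly the hypothesis of the block of results beginning with Theorem~\ref{A1doublecycle}, and identify the theorem's $\hat{\F{A}}$ with the $C^*$-algebra $\F{A}$ of that setup.

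Next I would record the structure of the finite-dimensional $C^*$-algebra $\hat{\F{A}}$. By Lemma~\ref{cstaralg} it is a $C^*$-algebra, and as it acts on the finite-dimensional space $\hat{\Sc{U}}$ I may apply its Wedderburn decomposition: taking the minimal central projections $P_h$ (for $h\in H$) as constructed in the paragraph preceding Lemma~\ref{DKS Lemma 4.4}, each $\Sc{W}_h=P_h\hat{\Sc{U}}$ is a sum $\Sc{U}_h^{(m_h)}$ of $m_h=|H_h|$ mutually unitarily equivalent minimal $\F{A}^*$-invariant summands on which $\hat{\F{A}}$ acts as $M_{d_h}$ with $d_h=\dim\Sc{U}_h$. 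This yields $\hat{\F{A}}=\sideset{}{^\oplus}\sum_{h\in H}M_{d_h}\otimes\B{C}^{m_h}$.

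For the spatial decomposition I would use the block form of $(S,\rho)$ read off from Lemma~\ref{wanderingspace}. Each $\Sc{H}_j=\F{S}[\Sc{U}_j]$ is the minimal dilation space of the compression to $\Sc{U}_j$, and the wandering-subspace computation identifies $\Sc{H}_j\ominus\Sc{U}_j$ with $\Sc{F}(E(G))^{(\alpha_j)}=\Sc{H}_G^{(\alpha_j)}$, where $\alpha_j=\dim\bigl(\tilde{S}(E(G)\otimes\Sc{U}_j)\ominus\Sc{U}_j\bigr)$; grouping the summands by the class $H_h$ gives $\Sc{K}_h=\Sc{U}_h\oplus\Sc{H}_h^{(\alpha_h)}$ and $\Sc{K}=\sideset{}{^\oplus}\sum_{h\in H}\Sc{K}_h^{(m_h)}=\hat{\Sc{U}}\oplus\Sc{H}_G^{(\alpha)}$, with $\alpha=\sum_{h\in H}\alpha_h m_h$ obtained by counting multiplicities.

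Finally I would obtain the algebra decomposition. The analysis after Lemma~\ref{wanderingspace} gives the upper-triangular form $\F{S}=\bigl[\begin{smallmatrix}\hat{\F{A}}&0\\ *&\Sc{L}_G^{(\alpha)}\end{smallmatrix}\bigr]$ relative to $\hat{\Sc{U}}\oplus\hat{\Sc{U}}^\perp$. By Lemma~\ref{central projections} the projection $P_{\Sc{U}}$ lies in $\F{S}$, so $\F{S}=\F{S}P_{\Sc{U}}+(I-P_{\Sc{U}})\F{S}$; the first summand is identified by Lemma~\ref{central projections 2} as $\sideset{}{^\oplus}\sum_{h\in H}(\Sc{B}(\Sc{H}_h)P_h)^{(m_h)}$, and since the strictly lower-triangular corner is absorbed once $P_{\Sc{U}}\in\F{S}$, the second summand is $0_{\hat{\Sc{U}}}\oplus\Sc{L}_G^{(\alpha)}$. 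Combining these yields the asserted decomposition of $\F{S}$. The only delicate point in the whole argument is the membership $P_{\Sc{U}}\in\F{S}$, which is precisely where the strong double-cycle property enters, through the $\Sc{A}_1$-type Theorem~\ref{A1doublecycle}; everything else is bookkeeping of central projections and multiplicities, so I expect no further obstacle.
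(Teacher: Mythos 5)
Your proposal is correct and follows essentially the same route as the paper, which obtains this theorem precisely by combining Theorem \ref{summary} and the setup derived from Lemma \ref{wanderingspace} with Lemmas \ref{central projections} and \ref{central projections 2}, exactly as you do. The only quibble is the harmless slip of writing $(I-P_{\Sc{U}})\F{S}$ where you mean $\F{S}(I-P_{\Sc{U}})$: the splitting needed is $T=TP_{\Sc{U}}+TP_{\Sc{U}^{\perp}}$, both summands lying in $\F{S}$ once $P_{\Sc{U}}\in\F{S}$, and the second summand (not a left multiple) is what collapses to $0_{\hat{\Sc{U}}}\oplus\Sc{L}_G^{(\alpha)}$.
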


  \subsection{Higher Rank Graph Algebras}\label{k graphs}

  \begin{definition}
   A \emph{$k$-graph} $(\Lambda,d)$ consists of a countable small category $\Lambda$, together with a degree functor $d$ from $\Lambda$ to $\B{N}^k$, satisfying the factorization property: for every $\lambda\in\Lambda$ and $\mathbf{m},\mathbf{n}\in\B{N}^k$ with $d(\lambda)=\mathbf{m}+\mathbf{n}$, there are unique elements $\mu, \nu\in\Lambda$ such that $\lambda=\mu\nu$ and $d(\mu)=\mathbf{m}$ and $d(\nu)=\mathbf{n}$. For each $\mathbf{n}\in\B{N}^k$ let $\Lambda^{\mathbf{n}}=d^{-1}(\mathbf{n})$. Each $k$-graph $(\Lambda,d)$ has a \emph{source} map $s:\Lambda\rightarrow\Lambda^0$ and a \emph{range} map $r:\Lambda\rightarrow\Lambda^0$.

   A $k$-graph $\Lambda$ is said to be \emph{finitely aligned} if for each $\lambda,\mu\in\Lambda$ the set $\{\nu\in\Lambda:\exists_{\alpha,\beta\in\Lambda}\ \nu=\lambda\alpha=\mu\beta,\ d(\nu)=d(\lambda)\vee d(\mu)\}$, is finite.
  \end{definition}

  A $1$-graph $(\Lambda,d)$ is simply a graph with vertices $\Lambda^0$ and edges $\Lambda^1$. A $k$-graph can be visualized as a multi-coloured graph with vertices $\Lambda^0$ and $\Lambda^{\mathbf{e}_i}$ representing a different coloured set of edges for each $i$.

  As in the $1$-graph case, a $k$-graph can be associated with a product system of $C^*$-correspondences over $\B{N}^k$. Briefly, define a $C^*$-algebra $\Sc{A}$ by $\Lambda^0$, in the same manner that we used the vertices of a $1$-graph to define a $C^*$-algebra. For $1\leq i\leq k$ define a $C^*$-correspondence $E_i$ over $\Sc{A}$ by $\Lambda^{\mathbf{e}_i}$ in the same manner that we defined a $C^*$-correspondence using the edges of a $1$-graph. The factorisation rule of $(\Lambda,d)$ will define the isomorphisms $t_{i,j}:E_i\otimes E_j\rightarrow E_j\otimes E_i$, and this in turn will define a product system of $C^*$-correspondences $(E(\Lambda),\Sc{A})$ over $\B{N}^k$, see \cite{RaeSim} or \cite{Skalski} for the details. In \cite{RaeSim} it is shown that Toeplitz $\Lambda$-families of contractions coincide with isometric representations of $E(\Lambda)$. In \cite{Skalski} it is shown that $\Lambda$-contractions coincide with representations of $E(\Lambda)$. Thus there is a $1-1$ correspondence between representations of the $k$-graph $(\Lambda,d)$ and representations of $(E(\Lambda),\Sc{A})$.

  When $\Lambda$ is finitely aligned then $E(\Lambda)$ will satisfy the normal ordering condition. Hence Theorem \ref{SkalskiDil} can be applied to finitely aligned $k$-graphs. This is the dilation theorem originally proved in \cite{SkalZach2}. 

	Let $\Lambda$ be a $k$-graph with no sources and with $\Lambda^0$ finite. In \cite[Theorem 4.7]{SkalZach2} it is shown that there is a $1-1$ correspondence between states $\omega$ on the Cuntz-Pimsner algebra $\Sc{O}_\Lambda$ and (the unitary equivalence classes of) triples $(\Sc{V},\Omega,(S^{(1)},\ldots,S^{(k)},\rho))$ where $\Sc{V}$ is a vector space, $\Omega\in\Sc{V}$ is norm $1$ vector, $(S^{(1)},\ldots,S^{(k)},\rho)$ is an isometric representation of $E(\Lambda)$, and $\Sc{V}=\overline{\F{S}^*\Omega}$ (where $\F{S}$ is the algebra generated by $(S^{(1)},\ldots,S^{(k)},\rho)$). It is noted in \cite{SkalZach2} that $(S^{(1)},\ldots,S^{(k)},\rho)$ is the minimal isometric dilation of the compression of $(S^{(1)},\ldots,S^{(k)},\rho)$ to $\Sc{V}$. Given this result, it is natural to define what it means for a state on $\Sc{O}_\Lambda$ to be finitely correlated as follows:

\begin{definition}A state $\omega$ on $\Sc{O}_\Lambda$ is \emph{finitely correlated} if its corresponding triple $(\Sc{V}_\omega,\Omega_\omega,(S_\omega^{(1)},\ldots,S_\omega^{(k)},\rho_\omega))$ has the property that $\Sc{V}_\omega$ is finite dimensional. 
\end{definition}
When $\omega$ is a finitely correlated state on the Cuntz-Pimsner algebra $\Sc{O}_\Lambda$ with corresponding triple $(\Sc{V}_\omega,\Omega_\omega,(S_\omega^{(1)},\ldots,S_\omega^{(k)},\rho_\omega))$, the representation $(S_\omega^{(1)},\ldots,S_\omega^{(k)},\rho_\omega)$ will be finitely correlated. When $\Lambda$ is a $1$-graph with a single vertex and $n$ edges, $\Sc{O}_\Lambda$ is the Cuntz algebra $\Sc{O}_n$ and the above definition coincides with the definition of finitely correlated states in \cite{BrattJorg1}.

  Theorem \ref{prod sys summary} and Theorem \ref{graph summary} together give us the following result.

  \begin{proposition}\label{higher rank proj}
   Let $(\Lambda, d)$ be a $k$-graph. Suppose there is an $\mathbf{n}=(n_1,\ldots,n_k)\in\B{N}^k$ with $n_i\neq0$ for $1\leq i\leq k$, such that the $1$-graph with vertices $\Lambda^0$ and edges defined by $\Lambda^{\mathbf{n}}$ has the strong double-cycle property. Let $(S^{(1)},\ldots,S^{(k)},\rho)$ be a finitely correlated, isometric, fully coisometric representation of $E(\Lambda)$ generating a \textsc{wot}-closed algebra $\F{S}$. Then $\F{S}$ contains the projection onto its minimal cyclic coinvariant subspace.
  \end{proposition}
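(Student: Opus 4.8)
The plan is to reduce the assertion to the single-correspondence result Theorem \ref{graph summary}, applied to the $1$-graph $\Lambda^{\mathbf{n}}$, and then to transport the conclusion back to $\F{S}$ by means of the identification of minimal cyclic coinvariant subspaces supplied by Theorem \ref{prod sys summary}. The point is that the hypothesis $n_i\neq0$ for every $i$ is exactly what lets Theorem \ref{rank k to rank 1} bridge the rank-$k$ situation and the rank-$1$ situation.

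First I would set $G$ to be the $1$-graph with vertex set $\Lambda^0$ and edge set $\Lambda^{\mathbf{n}}$, whose associated $C^*$-correspondence is precisely the degree-$\mathbf{n}$ fibre $E(\mathbf{n})$ of the product system $E(\Lambda)$. By hypothesis $G$ is a finite graph with the strong double-cycle property. Since $(S^{(1)},\ldots,S^{(k)},\rho)$ is finitely correlated and fully coisometric, it is the minimal isometric dilation of a finite-dimensional, fully coisometric representation $(A^{(1)},\ldots,A^{(k)},\sigma)$ on $\Sc{V}$. Because each $(A^{(i)},\sigma)$ is fully coisometric, each $\tilde{A}^{(i)}$ is a coisometry, and hence so is the composition $\tilde{A}_{\mathbf{n}}$; that is, $(A_{\mathbf{n}},\sigma)$ is a fully coisometric, finite-dimensional representation of $E(\mathbf{n})$. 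As $n_i\neq0$ for all $i$, Theorem \ref{rank k to rank 1} identifies the $C^*$-correspondence representation $(S_{\mathbf{n}},\rho)$ of $E(\mathbf{n})$ as the unique minimal isometric dilation of $(A_{\mathbf{n}},\sigma)$. Thus $(S_{\mathbf{n}},\rho)$ is exactly a finitely correlated, fully coisometric representation of the finite graph $G$.

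Next I would apply Theorem \ref{graph summary} to $G$ and the representation $(S_{\mathbf{n}},\rho)$: the \textsc{wot}-closed algebra $\F{S}_{\mathbf{n}}$ generated by $S_{\mathbf{n}}(E(\mathbf{n}))$ and $\rho(\Sc{A})$ contains the projection onto its unique minimal cyclic coinvariant subspace $\Sc{W}_{\mathbf{n}}$. Two observations then finish the argument. First, $\F{S}_{\mathbf{n}}\subseteq\F{S}$, since every generator $S_{\mathbf{n}}(\xi)$ with $\xi\in E(\mathbf{n})$ is a \textsc{wot}-limit of sums of products of the generators $S^{(i)}(\cdot)$ of $\F{S}$, and $\rho(\Sc{A})\subseteq\F{S}$; hence $P_{\Sc{W}_{\mathbf{n}}}\in\F{S}$. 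Second, by the final assertion of Theorem \ref{prod sys summary} we have $\Sc{W}_{\mathbf{n}}=\hat{\Sc{V}}$, the unique minimal cyclic coinvariant subspace of the product-system representation $(S^{(1)},\ldots,S^{(k)},\rho)$. Combining these gives $P_{\hat{\Sc{V}}}\in\F{S}$, which is the claim.

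The step requiring the most care is the identification $\Sc{W}_{\mathbf{n}}=\hat{\Sc{V}}$: a priori the minimal cyclic coinvariant subspace computed inside the single correspondence $E(\mathbf{n})$ need not agree with the one for the whole product system (for a general non-zero $\mathbf{m}$ it may be strictly smaller, as Remark \ref{prod sys summary remark} shows). It is precisely the requirement $n_i\neq0$ for all $i$ that makes Theorem \ref{rank k to rank 1}, and therefore the coincidence recorded in Theorem \ref{prod sys summary}, available. Everything else is routine: verifying that the graph-algebra machinery of Theorem \ref{graph summary} applies verbatim to $G=\Lambda^{\mathbf{n}}$ and that the containment $\F{S}_{\mathbf{n}}\subseteq\F{S}$ holds.
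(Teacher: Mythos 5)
Your proposal is correct and follows exactly the route the paper intends: the paper's entire justification is the sentence ``Theorem \ref{prod sys summary} and Theorem \ref{graph summary} together give us the following result,'' and you have simply spelled out that combination, including the two points that need checking (the containment $\F{S}_{\mathbf{n}}\subseteq\F{S}$ and the identification $\Sc{W}_{\mathbf{n}}=\hat{\Sc{V}}$ via Theorem \ref{rank k to rank 1}). Nothing further is needed.
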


  \subsubsection{Graphs With a Single Vertex}\label{Single Vertex Section}
  Suppose $(\Lambda,d)$ is a $k$-graph where $\Lambda^0$ is a singleton and $\Lambda^{\mathbf{e}_i}$ is finite for $1\leq i\leq k$. Let $\Lambda^{\mathbf{e}_i}=\{e_l^{(i)}:1\leq l\leq m_i\}$, where $m_i$ is the number of elements in $\Lambda^{\mathbf{e}_i}$. Let $S_{m_i\times m_j}$ be the set of permuations on the set of tuples $\{(a,b):1\leq a\leq m_i,\ 1\leq b\leq m_j\}$. By the factorisation property, for each pair $i,j$ with $1\leq i<j\leq k$ there is a permutation $\theta_{ij}\in S_{m_i\times m_j}$ such that
  \begin{equation*}
   e_l^{(i)}e_m^{(j)}=e_{m'}^{(j)}e_{l'}^{(i)}
  \end{equation*}
  when $\theta_{ij}(l,m)=(l',m')$. Let $\theta=\{\theta_{ij}:1\leq i<j\leq k\}$. The $k$-graph $\Lambda$ can be described as being a unital semigroup $\B{F}_\theta^+$, where $\B{F}_\theta^+$ is the semigroup
  \begin{equation*}
   \<e_l^{(i)}:e_l^{(i)}e_m^{(j)}=e_{m'}^{(j)}e_{l'}^{(i)}\text{ when }\theta_{ij}(l,m)=(l',m')\>.
  \end{equation*}	
  That is, for each $i$, $e_i^{(1)},\ldots, e_i^{(m_i)}$ form a copy of the free semigroup $\B{F}_{m_i}^+$ and, when $i\neq j$ and $i<j$, a commutation relation between the $e_i$'s and the $e_j$'s is defined by the permutation $\theta_{ij}$. Note that if we are given arbitrary permutations $\theta_{ij}\in S_{m_i\times m_j}$ for $1\leq i<j\leq k$ we cannot necessarily form a cancellative semigroup $\B{F}_\theta^+$. However, if $k=2$ and $\theta\in S_{m_1\times m_2}$ is any permutation, $\B{F}_\theta^+$ will form a cancellative semigroup, and hence a $2$-graph on a single vertex.

  Let $(E(\B{F}_\theta^+),\Sc{A})$ be the product system of $C^*$-correspondences defined by a $k$-graph on a single vertex $\B{F}_\theta^+$. It is not hard to see that $\Sc{A}=\B{C}$ and that $E_i=\B{C}^{m_i}$ for $1\leq i\leq m$. Let $(A^{(1)},\ldots,A^{(k)},\sigma)$ be a representation of $(E(\B{F}_\theta^+),\Sc{A})$ on a Hilbert space $\Sc{H}$ and define $A_l^{(i)}=A^{(i)}(e_l^{(i)})$. For each $i$ we have that
  \begin{equation*}
   A^{(i)}=[A_1^{(i)},\ldots,A_{m_i}^{(i)}]
  \end{equation*}
  is a row-contraction. A representation $(A^{(1)},\ldots,A^{(k)},\sigma)$ is fully coisometric when
  \begin{equation}
   \sum_{j=1}^{m_i}A_j^{(i)}A_j^{(i)*}=I_\Sc{H},
  \end{equation}
  for $1\leq i\leq k$ i.e. when each row-contraction is defect free. A representation is isometric when $[A_1^{(i)},\ldots,A_{m_i}^{(i)}]$ is a row-isometry for $1\leq i\leq k$.

  Conversely, if $[A_1^{(i)},\ldots,A_{m_i}^{(i)}]$ are $k$ row-contractions which satisfy for $1\leq i<j\leq k$
  \begin{equation*}
   A_l^{(i)}A_m^{(j)}=A_{m'}^{(j)}A_{l'}^{(i)}
  \end{equation*}
  when $\theta_{ij}(l,m)=(l',m')$, then they define a representation of the $k$-graph $\B{F}_\theta^+$.

  The $k$-graph $\B{F}_\theta^+$ is finite and so it is finitely aligned. Thus, by either Theorem \ref{coisometric dilations} or Theorem \ref{SkalskiDil} together with Lemma \ref{fullycoiso uniq dil}, all fully coisometric representations of $\B{F}_\theta^+$ have a unique minimal isometric, coisometric dilation.

  Let $\B{F}^+$ be the unital free semigroup with $m_1m_2\ldots m_k$ generators 
  \begin{equation*}
   \{e_{l_1}^{(1)}e_{l_2}^{(2)}\ldots e_{l_k}^{(k)}:1\leq l_j\leq m_j\}.
  \end{equation*}
  This corresponds to the graph with $1$-vertex and $C^*$-correspondence $E(1,1,\ldots,1)$. If $m_1\ldots m_k\neq 1$, i.e. if $\B{F}^+\not\cong\B{Z}_{\geq0}$, then it is clear that $\B{F}^+$ has the strong double-cycle property. Thus by Proposition \ref{higher rank proj}, if $[S_1^{(i)},\ldots,S_{m_i}^{(i)}]$ are defect free row-isometries defining a finitely correlated representation of $\B{F}_\theta^+$, then the \textsc{wot}-closed algebra they generate contains the projection onto the minimal cyclic coinvariant subspace.

  \begin{definition}
   Let $[A_1^{(i)},\ldots,A_{m_i}^{(i)}]$, for $1\leq i\leq k$, define a representation of $\B{F}_\theta^+$ on a Hilbert space $\Sc{H}$. The representation is \emph{atomic} if each $A_l^{(i)}$ is a partial isometry and there is an orthonormal basis $\{\xi_n:n\geq 1\}$ of $\Sc{H}$ which is permuted, up to scalars, by each partial isometry, i.e. $A_l^{(i)}\xi_n=\alpha\xi_m$ for some $m$ and some $\alpha\in\B{T}\cup\{0\}$.
  \end{definition}

  Atomic representations of $k$-graphs on a single vertex were studied by Davidson, Power and Yang for $2$-graphs \cite{DavPowYang2} and by Davidson and Yang for $k$-graphs \cite{DavYang2}. There the existence of the minimal cyclic coinvariant subspace is shown. The minimal cyclic coinvariant subspace for a finitely correlated, isometric, fully coisometric atomic representation is exhibited by a group construction. That is, a finitely correlated, isometric, fully coisometric atomic representation is shown to be a dilation of a certain representation on $\Sc{B}(\ell^2(G))$ where $G$ is a group with $k$ generators. The following theorem shows that finitely correlated atomic representations are plentiful.
  
  \begin{theorem}[Davidson, Power and Yang \cite{DavPowYang2, DavYang2}]
   There are irreducible finite dimensional defect free atomic representations of $\B{F}_\theta^+$ of arbitrarily large dimension.
  \end{theorem}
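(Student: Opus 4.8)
The plan is to exhibit, for each target dimension, an explicit irreducible finite-dimensional defect free atomic representation, following the group/combinatorial construction of Davidson, Power and Yang. The first observation is that such a representation is completely determined by its action on an orthonormal basis $\{\xi_x\}_{x\in X}$ of $\Sc{H}$: each $A_l^{(i)}$ is a partial isometry that sends each $\xi_x$ either to $0$ or to a scalar multiple of some basis vector, and \emph{defect free} (i.e.\ fully coisometric) means precisely that for each fixed colour $i$ the ranges of $A_1^{(i)},\ldots,A_{m_i}^{(i)}$ are mutually orthogonal and partition the basis $X$. Thus the data of the representation amounts to a labelling of $X$ recording, for each colour $i$ and each $x$, the unique label $l$ for which $\xi_x$ lies in the range of $A_l^{(i)}$ together with the colour-$i$ predecessor of $x$; the commutation relations $A_l^{(i)}A_m^{(j)}=A_{m'}^{(j)}A_{l'}^{(i)}$ (for $i<j$, with $\theta_{ij}(l,m)=(l',m')$) translate into a compatibility condition on this labelling.

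Second, I would build a family of such labellings on sets $X_p$ of unbounded cardinality. The model to use is a finite ``torus'' cover: take $X_p$ to be a finite, $\theta$-equivariant quotient of the free structure underlying $\B{F}_\theta^+$ on which each colour-$i$ predecessor map acts as a shift, and choose the labelling so that the two ways of reading a colour-$i$-then-colour-$j$ square agree through the permutation $\theta_{ij}$. For a single colour this is transparent: on $X_p=\B{Z}/p$ with predecessor $x\mapsto x-1$ and an aperiodic choice of labels one already obtains a defect free atomic representation of $\B{F}_m^+$ of dimension $p$; the substance for $k\ge 2$ is to propagate such a labelling consistently around the $\theta_{ij}$-squares. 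Letting $p\to\infty$ then produces representations of arbitrarily large dimension.

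Third, irreducibility. Because the representation is atomic, the operators $A_l^{(i)}A_l^{(i)*}$ are the diagonal projections onto the range classes, and words in the $A_l^{(i)}$ and their adjoints move basis vectors (up to scalars) along the predecessor maps; hence any reducing subspace for the representation is spanned by a subset of $X_p$ closed under these moves. Choosing the labelling so that the resulting groupoid acts transitively on $X_p$, with the diagonal projections separating the points of $X_p$, forces the only reducing subspaces to be $\{0\}$ and $\Sc{H}$, which is irreducibility. I also note that restriction to any reducing subspace preserves both atomicity and defect freeness, since a reducing projection commutes with each $A_l^{(i)}$ and hence with the identity $\sum_l A_l^{(i)}A_l^{(i)*}=I$; so even a reducible model could be cut down to an irreducible defect free atomic summand, although for the dimension bound it is cleaner to make the model transitive from the outset.

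The main obstacle is the second step: arranging a labelling that simultaneously (a) satisfies the $\theta_{ij}$-commutation relations on every bicoloured square, (b) keeps each colour's range classes a genuine partition of $X_p$ (defect freeness), and (c) acts transitively so as to be irreducible, all while $|X_p|$ grows without bound. The $\theta$-compatibility in (a) is the genuinely $k$-graph-specific constraint, and it is where the combinatorics of $\B{F}_\theta^+$ enters; the cleanest route is to realise $X_p$ as a finite, $\theta$-equivariant quotient of the free structure rather than to impose the relations coordinate by coordinate, exactly as in \cite{DavPowYang2, DavYang2}.
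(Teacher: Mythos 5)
This statement is quoted in the paper from \cite{DavPowYang2, DavYang2} without proof, so there is no internal argument to compare against; judged on its own terms, your proposal is an outline rather than a proof, and the gap sits exactly where you place it. Your first step (an atomic defect free representation is equivalent to a labelled combinatorial structure on a basis, with $\sum_l A_l^{(i)}A_l^{(i)*}=I$ forcing the colour-$i$ ranges to partition the basis) and your third step (irreducibility from transitivity of the induced groupoid action together with the diagonal projections separating basis vectors) are both sound reductions. But the entire content of the theorem is your step two: for \emph{every} admissible family $\theta=\{\theta_{ij}\}$ one must actually produce, for arbitrarily large finite $n$, a set $X$ with $|X|=n$, colourwise predecessor bijections, and labels satisfying the $\theta_{ij}$-square compatibility, with the action transitive. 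You do not construct these; you state that ``the cleanest route is to realise $X_p$ as a finite, $\theta$-equivariant quotient of the free structure\ldots exactly as in \cite{DavPowYang2, DavYang2},'' which defers the one nontrivial step back to the very references being proved. In particular, nothing in your sketch rules out the a priori possibility that for some $\theta$ the compatibility constraints admit only finitely many transitive finite models, or only models of bounded size.

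Concretely, what is missing is the group construction of Davidson, Power and Yang: one takes a finite abelian quotient $G=\B{Z}^k/H$ of the degree lattice, realises the basis as $G$ (so each colour-$i$ predecessor map is translation by the image of $\mathbf{e}_i$, which is automatically a bijection and gives transitivity for free since the images of $\mathbf{e}_1,\ldots,\mathbf{e}_k$ generate $G$), and then solves the $\theta_{ij}$-cocycle conditions for the labels over this group; letting $[\,\B{Z}^k:H\,]\to\infty$ gives the arbitrarily large dimensions. Your ``finite torus cover'' remark gestures at this, but the verification that the labelling equations are solvable over such quotients for an arbitrary $\theta$ (and that the resulting representation is irreducible rather than merely transitive at the level of the basis) is the substance of the cited theorem and must be carried out, not assumed.
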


  \begin{example}\label{atomic flip}
   Let $\B{F}_\theta^+$ be the two graph where $\theta\in S_{2\times 2}$ is the permutation defined by the cycle $((1,1),(2,2))$. Let $\Sc{V}$ be a $4$ dimensional vector space with orthonormal basis $\{\zeta_1,\zeta_2,\zeta_3,\zeta_4\}$. We define a fully coisometric, atomic representation of $\B{F}_\theta^+$ on $\Sc{V}$ by row-contractions $[A_1,A_2]$ and $[B_1,B_2]$, where
   \begin{align*}
    &A_1\zeta_1=\zeta_2& A_1\zeta_3=\zeta_4&& A_1\zeta_i&=0 \ \text{ for }\ i=2,4\\
    &A_2\zeta_2=\zeta_1& A_2\zeta_4=\zeta_3&& A_1\zeta_i&=0 \ \text{ for }\ i=1,3\\
    &B_1\zeta_2=\zeta_3& B_1\zeta_4=\zeta_1&& B_1\zeta_i&=0 \ \text{ for }\ i=1,3\\
    &B_2\zeta_1=\zeta_4& B_2\zeta_3=\zeta_2&& B_1\zeta_i&=0 \ \text{ for }\ i=2,4.
   \end{align*}
   Let $[S_1,S_2]$ and $[T_1,T_2]$ define the unique minimal isometric dilation of this representation. The representation defined by $[S_1,S_2]$ and $[T_1,T_2]$ will also be atomic \cite{DavPowYang1}. Clearly $\Sc{V}$ is the minimal cyclic coinvariant subspace for this representation. For $u,w\in\B{F}_2^+$, where $u=i_1\ldots i_l$ and $w=j_i\ldots j_m$, we write $S_uT_w$ for
   \begin{equation*}
    S_{i_1}\ldots S_{i_l}T_{j_1}\ldots T_{j_m}.
   \end{equation*}
   The set $\{S_u T_w\zeta_i:u,w\in\B{F}_2^+, i=1,2,3,4\}$ will form an orthonormal basis of $\Sc{H}$. Since the representation is atomic and fully coisometric each of these basis vectors will be in the range of exactly one $S_i$ and exactly one $T_j$. It follows that $[S_1,\ldots,S_n]$ is the minimal isometric Frazho-Bunce-Popescu dilation of the row-contraction $[A_1,\ldots,A_n]$. That is, in this case, it is not necessary to have $\mathbf{m}\geq(1,1)$ in order for the conclusion of Theorem \ref{rank k to rank 1} to be satisfied. This is true of all finitely correlated atomic representations. Recall, by Remark \ref{rank k to rank 1 remark}, that in general we do need the condition that $\mathbf{m}\geq(1,1)$ for Theorem \ref{rank k to rank 1} to hold.

   We also have that the minimal cyclic coinvariant subspace for $[S_1,\ldots,S_n]$ is all of $\Sc{V}$. Thus, again, it is not necessary to have $\mathbf{m}\geq(1,1)$ for the conclusion of Theorem \ref{prod sys summary} to be satisfied. This is also a general fact about atomic representations. Again, recall that we do require that $\mathbf{m}\geq(1,1)$ in the general case. See Remark \ref{prod sys summary remark} and the following example.
   \end{example}

  There are examples of finite dimensional, fully coisometric representations which are not partially isometric.
  \begin{example}\label{not partially iso}
   Let $\theta\in S_{2\times 2}$ be the permutation defined by $\theta(1,1)=(1,2)$, $\theta(1,2)=(1,1)$, $\theta(2,1)=(2,2)$ and $\theta(2,2)=(2,1)$, and let $\B{F}_\theta^+$ be the single vertex $2$-graph defined by $\theta$. Let $[a_1,a_2]$ be a defect free row-contraction on a finite dimensional Hilbert space $\Sc{V}$ and $[b_1,b_2]$ be a defect free row-contraction on a finite dimensional Hilbert space $\Sc{W}$. We will define a representation of $\B{F}_\theta^+$ on $\Sc{V}\otimes\Sc{W}^{(2)}$. Define
   \begin{align*}
      A_1& =a_1\otimes\begin{bmatrix}0&I_\Sc{W}\\I_\Sc{W}&0\end{bmatrix}&
      A_2& =a_2\otimes\begin{bmatrix}0&I_\Sc{W}\\I_\Sc{W}&0\end{bmatrix}&\\
      B_1& =I_{\Sc{V}}\otimes\begin{bmatrix}b_1&0\\0&b_2\end{bmatrix}&
      B_2& =I_{\Sc{V}}\otimes\begin{bmatrix}b_2&0\\0&b_1\end{bmatrix}.
   \end{align*}
   Then $[A_1,A_2]$ and $[B_1,B_2]$ define a finite dimensional, fully coisometric representation of $\B{F}_\theta^+$.

   Let $\Sc{V}=\Sc{W}=\B{C}^2$ and let
   \begin{align*}
    a_1&=\begin{bmatrix}1&0\\0&\frac{1}{\sqrt{2}}\end{bmatrix}&
    a_2&=\begin{bmatrix}0&0\\\frac{1}{2}&\frac{1}{2}\end{bmatrix}\\
    b_1&=\begin{bmatrix}0&1\\1&0\end{bmatrix}&
    b_2&=\begin{bmatrix}0&0\\ 0&0\end{bmatrix}.
   \end{align*}
   Construct $[A_1,A_2]$ and $[B_1,B_2]$ as above. Let $\Sc{U}$ be the minimal cyclic coinvariant subspace for the row-contraction $[A_1,A_2]$. A calculation shows that
   \begin{equation*}
    \Sc{U}=\spa\{e_1,e_3,e_5,e_7\},
   \end{equation*}
   where $\{e_1,\ldots,e_8\}$ is the standard orthonormal basis for $\B{C}^8$. However, we have that $B_1^*e_1=e_2\not\in\Sc{U}$, and so $\Sc{U}$ is not the minimal cyclic coinvariant subspace for the representation of $\B{F}_\theta^+$ defined by $[A_1,A_2]$ and $[B_1,B_2]$. In fact, the minimal cyclic coinvariant subspace for this representation is all of $\B{C}^8$. This example shows that atomic representations are special in not needing $\mathbf{m}\geq(1,1)$ in order to satisfy Theorem \ref{prod sys summary}. It is not true of all representations single vertex $2$-graphs.
  \end{example}

  The construction above works because the permutation $\theta$ is very simple. Precisely, if we fix $i$, $\theta$ satisfies $\theta(i,j)=\theta(i,j')$, i.e. $i$ is not changed. Similar constructions of fully coisometric representations of $2$-graphs will work for any $2$-graph defined by a permutation satisfying this condition. These representations will be doubly commuting.

  A general method of constructing  finite dimensional, fully coisometric representations of $2$-graphs which are not partially isometric has proved hard to find. We give below an example of finite dimensional, fully coisometric representation of a $2$-graph which is not doubly commuting.

  \begin{example}\label{not doubly commuting}
   Let $[A_1,A_2]$ and $[B_1,B_2,B_3]$ be row-contractions on $\B{C}^3$ with
   \begin{align*}
    A_1&=\begin{bmatrix}0&0&0\\0&0&0\\ \frac{1}{2}&\frac{1}{2}&0\end{bmatrix}&
    A_2&=\begin{bmatrix}1&0&0\\0&1&0\\ 0&0&\frac{1}{\sqrt{2}}\end{bmatrix}
   \end{align*}
   and
   \begin{align*}
    B_1&=\begin{bmatrix}\frac{1}{2}&\frac{1}{2}&0\\ \frac{1}{2}&\frac{1}{2}&0\\0&0&0\end{bmatrix}&
    B_2&=\begin{bmatrix}\frac{1}{2}&\frac{1}{2}&0\\ -\frac{1}{2}&-\frac{1}{2}&0\\0&0&\frac{1}{\sqrt{2}}\end{bmatrix}&
    B_3&=\begin{bmatrix}0&0&0\\0&0&0\\ \frac{1}{2}& \frac{1}{2}&0\end{bmatrix}.
   \end{align*}
   Then $[A_1,A_2]$ and $[B_1,B_2,B_3]$ define a fully coisometric representation of $\B{F}_\theta^+$ on $\B{C}^3$ where $\theta\in S_{2\times3}$ is the cycle
   \begin{equation*}
    ((1,1),(2,3),(1,2),(1,3)).
   \end{equation*}
   This fully coisometric representation is not doubly commuting.

	It is not hard to see that the minimal cyclic coinvariant space for this representation is $\B{C}^2=\spa\{e_1,e_2\}$, where $\{e_1,e_2,e_3\}$ is the standard orthonormal basis for $\B{C}^3$.
  \end{example}

\end{document}